\newtheorem{theorem}{Theorem}[section]
\newtheorem{lemma}[theorem]{Lemma}
\newtheorem{cor}[theorem]{Corollary}
\newtheorem{prop}[theorem]{Proposition}
\DeclarePairedDelimiter\floor{\lfloor}{\rfloor}
\begin{document}

\title{Vinogradov's Theorem with Fouvry-Iwaniec Primes}
\author{Lasse Grimmelt}
%\email{l.p.grimmelt@uu.nl}
%\address{}
%
\date{}
%\dedication{}
%\classification{11P32, (11N36)}
%\keywords{Goldbach-type theorems, Transference Principle}
%\thanks{}
\maketitle

\begin{abstract}
We show that every sufficiently large $x\equiv 3(4)$ can be written as the sum of three primes, each of which is a sum of a square and a prime square. The main tools are a transference version of the circle method and various sieve related ideas. In particular, a majorant of the set of primes of interest is constructed that overestimates it by a factor of less than $3$ and for which we have good control of the Fourier transform.
\end{abstract}

\section{Introduction}
In this paper we study a ternary additive problem involving a subset of the primes.  The result that every sufficient large odd integer can be represented as the sum of three primes was first proved by Vinogradov \cite{vin} in 1937. By the work of Helfgott \cite{hel}, we now know that this holds true for every odd $x\geq 7$. Hence, the weak Goldbach conjecture is solved. We consider primes $p$ of the form
\begin{align}
p=k^2+l^2, \label{foiprime}
\end{align}
where $k$ is an integer and $l$ is prime. Fouvry and Iwaniec \cite{foi} proved in 1997 that there are infinitely many such primes and we call them Fouvry-Iwaniec primes. Their result is an application of the asymptotic prime sieve, strikingly breaking the parity barrier of sieve theory. Our goal is a version of Vinogradov's Theorem restricted to these primes. As only primes $p\equiv 1(4)$ can be Fouvry-Iwaniec primes, if $x$ is the sum of three of those it necessarily holds that $x\equiv 3(4)$.  We show that this condition is sufficient for large $x$.
\begin{theorem}\label{Goal}
Every sufficiently large integer $x\equiv 3(4)$ can be written as the sum of three Fouvry-Iwaniec primes.
\end{theorem}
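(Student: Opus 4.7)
The plan is a transference-type application of the Hardy-Littlewood circle method. Let $a(n)=(\log n)\,\mathbf{1}_{\mathrm{FI}}(n)$ denote the logarithmically weighted indicator of Fouvry-Iwaniec primes up to $x$, and set $S(\alpha)=\sum_{n\leq x}a(n)e(n\alpha)$. Then the weighted representation count of $x$ as a sum of three FI primes is $R(x)=\int_0^1 S(\alpha)^3 e(-x\alpha)\,d\alpha$. Since the density of FI primes is $\asymp x/(\log x)^{3/2}$, the target is $R(x)\gg x^2/(\log x)^{3/2}$, which (stripping the $\log$ weights) translates to an unweighted count of representations of order $x^2/(\log x)^{9/2}$, and so, in particular, to at least one representation.

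First I would dissect $[0,1]$ into major arcs $\mathfrak{M}$ centered at rationals $a/q$ with $q\leq (\log x)^B$ and minor arcs $\mathfrak{m}$. On $\mathfrak{M}$ I would expand $S(\alpha)$ via an asymptotic for FI primes restricted to arithmetic progressions $n\equiv a\pmod{q}$, producing a local factor $c(a,q)$ times a smoothed Fourier transform of the interval. Summing these contributions produces the singular series of the ternary problem, restricted to the congruence $x\equiv 3\pmod{4}$; positivity at the prime $2$ is exactly what forces the congruence, while positivity at odd primes follows from a routine local analysis of the binary form $k^2+l^2$ with $l$ prime.

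For the minor arcs, FI primes form too sparse a set for a pointwise Weyl-type bound on $S(\alpha)$ to close the argument on its own. Following the transference philosophy of Green, I would construct an enveloping sieve majorant $\nu(n)\geq a(n)$ whose Fourier transform is concentrated on major arcs of a well-chosen width, and hence behaves pseudorandomly. This supports a restriction-type inequality: for some exponent $p>2$ close to $2$, one obtains a serviceable upper bound on $\|S\|_{L^p(\mathfrak{m})}$, which, combined with any power-saving pointwise estimate for $|S(\alpha)|$ on $\mathfrak{m}$ (itself coming from the sieve-cum-bilinear structure underlying Fouvry-Iwaniec), yields $\int_{\mathfrak{m}}|S(\alpha)|^3\,d\alpha = o\bigl(x^2/(\log x)^{3/2}\bigr)$.

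The principal obstacle is the major arc analysis. The Fouvry-Iwaniec asymptotic sieve that delivers FI primes is one of the few places where the parity barrier is genuinely broken, and extending it to count FI primes in arithmetic progressions $n\equiv a\pmod{q}$, uniformly in $q\leq (\log x)^B$ with a sufficiently strong error term, is the technical heart of the argument. This demands a careful reworking of the bilinear and linear sieve components — in particular the equidistribution of the auxiliary prime variable $l$ in progressions — so that the main term factors cleanly into local densities matching the singular series. Once these arithmetic inputs are secured, the assembly of major and minor arcs and the transference argument follow a more standard template.
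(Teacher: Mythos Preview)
Your outline has the right ingredients in the wrong configuration, and there is one genuine gap.

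The gap is the assumed ``power-saving pointwise estimate for $|S(\alpha)|$ on $\mathfrak{m}$.'' No such bound is available for Fouvry--Iwaniec primes directly: the bilinear machinery that establishes their existence does not produce usable cancellation in $\sum_{p\ \mathrm{FI}}e(p\alpha)$ on minor arcs. The paper states this explicitly, and it is the central reason the argument uses the transference principle rather than a classical circle-method bound. Your plan to control $\int_{\mathfrak{m}}|S|^3$ by combining a restriction estimate with an $L^\infty$ bound on $S$ therefore fails at the second factor.

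What the transference principle actually does is replace minor-arc control of $S$ by minor-arc control of the \emph{majorant} $\widehat{\nu}$: pseudorandomness of $\nu$ (namely $|\widehat{\nu}(\gamma)-\widehat{\mathbf{1}_{[N]}}(\gamma)|\leq\eta N$ for all $\gamma$), an $L^q$ bound on $\widehat{f}$ for some $2<q<3$, and a lower bound on the density of $f$ in arithmetic progressions together allow one to transfer to a bounded model $\tilde{f}$ and finish combinatorially. That combinatorics requires the relative density $\sum f/\sum\nu$ to exceed $1/3$, equivalently $\sum\nu/\sum f<3$. You do not mention this constraint, and it is the governing obstacle: a straightforward upper-bound sieve majorant narrowly misses the threshold. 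The paper achieves a ratio below $2.974$ only via a Chen--Iwaniec sieve-switching construction for $\nu$, and then proves minor-arc bounds for $\widehat{\nu}$ (not $S$) through a Type~I/II decomposition of Duke--Friedlander--Iwaniec type, with the Type~II sums handled by passing to the Gaussian integers and summing nontrivially over two-dimensional lattices. Your identification of the Siegel--Walfisz extension of Fouvry--Iwaniec as a key input is correct, but that is the comparatively easier half; the real work lies in building and controlling the majorant, not in $S$.
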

In contrast to the work of Fouvry and Iwaniec, who can handle \eqref{foiprime} in more general cases than $l$ being prime, we make crucial use of the restriction to this case. 

A natural approach to prove Theorem \ref{Goal} is to apply the Circle Method. If we have sufficient understanding of
\begin{align}\label{foifou}
\sum_{n\leq x} \Lambda^\Lambda(n) e(\alpha n),
\end{align}
where
\begin{align}\label{LLdef}
\Lambda^\Lambda(n):=\Lambda(n)\sum_{n=k^2+l^2}\Lambda(l)
\end{align}
is a weighted indicator of our set of interest, we can solve the ternary counting problem. This would resolve in an asymptotic of the form
\begin{align}\label{eq_expasym}
 \sum_{x=n_1+n_2+n_3}\Lambda^\Lambda(n_1)\Lambda^\Lambda(n_2)\Lambda^\Lambda(n_3) \sim \mathfrak{S}(x)x^2,
\end{align}
where the singular series $\mathfrak{S}(x)$ encodes the density of local solution. To apply the Circle Method, we need two different types of results: First, for $\alpha$ lying on the so called \emph{major arcs}, we need to understand the distribution of \eqref{LLdef} in arithmetic progressions to a certain range of moduli. While this is not done explicitely in \cite{foi}, such a result can be deduced without much problem, see Theorem \ref{GFI}. Second, for the remaining $\alpha$ we would require a \emph{minor arc bound} and this is where the approach seems to be stuck. The reason for this lies in the occurrence of two prime indicators. Applying the standard tool of combinatorical decompositions on both of them gives four combinations of Type I / II sums. All of these can be bound sufficiently, except the case of two Type II sums. Sieve theory tells us that we cannot reach primes without Type II information. Consequently it seems that new ideas would be required to prove the minor arc bounds for \eqref{foifou}. One of the central point of this paper is the following: We can still prove Theorem \ref{Goal} even though we don't have sufficient Fourier information for the set we are interested in.

The proof of Theorem \ref{Goal} is made possible by the \emph{transference Circle Method}. This concept originates in Green's work \cite{gre} in which the existence of infinitely many three term arithmetic progression in dense subsets of the primes is proved. We use a version that can be found in recent work of Maynard, Matomäki, and Shao on Vinogradov's Theorem with almost equal summands \cite{mms}. The advantage of the transference Circle Method is that we require exponential sum information not for the Fouvry-Iwaniec primes (see \eqref{foifou}), but for some sufficiently tight majorant of them. As mentioned above, we are able to handle all combinations of Type I/II sums in which there is at least one Type I sum. As sieves are Type I sums, this naturally leads us to using them for our majorant. One disadvantage of the transference Circle Method is that it does not give us the expected asymptotic \eqref{eq_expasym}. Instead of the pure existence of solutions given by Theorem \ref{Goal}, the method we use could be sharpened to give an explicit lower bound. However, it is clear that it would give the wrong leading constant. In addition, without further modification of the arguments given here, it would also miss the expected order of magnitude by at least $(\log x)^2$, due to the $W$-trick.

The majorant is constructed by using the linear sieve in conjunction with Chen-Iwaniec sieve switching, see \cite{che} and \cite{iwa}. For estimating minor arc exponential sums associated to the majorant we use the combinatorical dissection into Type I and II sums that was described by Duke, Friedlander, and Iwaniec in \cite{dfi}. This is necessary since the commonly used identities of Vaughan and Heath-Brown are insufficient for our purpose. The more difficult Type II sums are approached by following the idea in \cite{foi} to step into the Gaussian integers. This results in sums over lattices and by choosing a suitable basis we can catch cancellation caused by the minor arc phase in a adequate large range. Since the requirements for finding three term arithmetic progressions in dense subsets are weaker than what we need for Theorem \ref{Goal}, we also obtain the following result.
\begin{theorem}\label{3apcor}
Every subset of the Fouvry-Iwaniec primes with positive relative density contains infinitely many three term arithmetic progressions.
\end{theorem}
Both Theorem \ref{Goal} and Theorem \ref{3apcor} are deduced in the last section.

We now outline the basics of the transference principle. For a complete and accessible description we refer the reader to section 5 of Green and Tao's paper \cite{gt}. We write $[N]:=\{1, \ldots N\}$. For any function $f:[N]\mapsto\mathbbm{C}$ we denote its Fourier transform by
\begin{align*}
\hat{f} \text{: } \mathbbm{R}/\mathbbm{Z}&\to \mathbbm{C}\\
\gamma &\mapsto \sum_{n}f(n)e(\gamma n)
\end{align*}
Let now $f,\nu \text{: }  [N]\to \mathbbm{R}$ be two functions with the majorisation condition
\begin{align*}
0\leq f(n)\leq \nu(n) \text{ for all } n\in[N].
\end{align*}
Two properties that are central for the transference principle. 
\begin{enumerate}[I.]
\item The function $\nu$ behaves pseudorandom, that is
\begin{align} \label{dpseudorandom}
\bigl| \hat{\nu}(\gamma)-\widehat{1_{n\in[N]}}(\gamma) \bigr|\leq \eta N \text{ for all } \gamma\in \mathbbm{R}/\mathbbm{Z}
\end{align}
for some sufficiently small $\eta$.
\item The function $f$ fulfills for some $2<q<3$ and fixed $K\geq 1$ the $L^q$ estimate
\begin{align}\label{Lqrestrctionintro}
\int_0^1 |\hat{f}(\gamma)|^q d\gamma \leq K N^{q-1}.
\end{align}
\end{enumerate}
Green showed that if both of these are met, one can transfer the existence of ternary additive patterns in $f$ to similar ones for a function $\tilde{f}\text{: }[N]\to \mathbbm{R}$. This new function is bounded, it holds that
\begin{align*}
0\leq \tilde{f}(n) \leq 1+\epsilon
\end{align*}
for all $n$ and some small $\epsilon$ that depends on $\eta$ and $K$. Furthermore its density in the integers is the same as the relative density of $f$ in $\nu$, i.e. it holds that
\begin{align*}
\frac{\sum_{n\leq N}\tilde{f}(n)}{N} =\frac{\sum_{n}f(n)}{\sum_n \nu(n)}.
\end{align*}
As $\tilde{f}$ behaves set-like, one can use different techniques to ensure the existence of solutions to a given problem in $\tilde{f}$. The property \eqref{dpseudorandom} is closely related to major arc asymptotics and minor arc bounds of the classical Circle Method. After dealing with the effect of small moduli that cause major arc spikes with help of the $W$-trick, \eqref{dpseudorandom} can be deduced for 
\begin{align*}
&\nu(n)=\begin{cases}\Lambda(n), &\text{ if } n\leq N\\
0, &\text{ else,}
\end{cases}
\end{align*}
where $\Lambda$ is the von Mangoldt function, by using Vinogradov's minor arc bound. In the original work, Green used the concept to reduce the statement 
\begin{quote}\emph{Every positive density subset of the \emph{primes} contains infinitely many 3-term arithmetic progressions}
\end{quote}
to
\begin{quote}\emph{Every positive density subset of the \emph{integers} contains infinitely many 3-term arithmetic progressions.}\end{quote} As the second one is known as Roth's Theorem, Green proved Roth's Theorem in the primes in this way.

By replacing Roth's Theorem with a suitable statement about sums of three integers lying in some dense set, the transference principle can be used to obtain lower bounds for
\begin{align*}
r(x,f)=\sum_{n_1+n_2+n_3=x}f(n_1)f(n_2)f(n_3).
\end{align*}
For example in the works of Li and Pan \cite{lpd} and Shao \cite{sh1}, it was used to consider Goldbach's weak problem in dense subsets of the primes. 

This concept is useful for other aspects than just considering arbitrary dense subsequences of sequences $\nu$ for which $r(x,\nu)$ is understood. Besides the mentioned result \cite{mms} it was used in another work of Shao \cite{sh2} for an alternative proof of Vinogradov's Theorem that does not rely on the theory of L-functions and is as such notably independent of Siegel's Theorem. In these works a given interesting $f$ is majorised by some $\nu$ to obtain a lower bound for $r(x,f)$. As an advantage over the classical Circle Method one does then not need the full strength of results for $\hat{f}$ but for $\hat{\nu}$ instead. This is the starting point for the proof of Theorem \ref{Goal}. Compared to \cite{sh2} and \cite{mms} in which the major arc case is the one with insufficient information for the classical Circle Method, we use the transference principle to overcome lacking minor arc bounds.

In contrast to finding three term arithmetic progressions, our problem is not translation invariant. This means that the results are sensitive to the relative density. The result most suitable for us in this aspect is stated in \cite{mms} and roughly means the following. If \eqref{dpseudorandom} and \eqref{Lqrestrctionintro} are met, we can get a lower bound for $r(x,f)$ as long as 
\begin{align*}
\frac{\sum \nu(n)}{\sum f(n)}< 3
\end{align*}
and $f$ is not too sparse in arithmetic progressions. The precise statement of the transference principle we use is given by Theorem \ref{MT} in the next subsection. It contains three conditions, the first two of which are variants of \eqref{dpseudorandom} and \eqref{Lqrestrctionintro}. The third combines the restrictions on the density of the Fouvry-Iwaniec primes in arithmetic progressions and relative to the majorant. For the Fouvry-Iwaniec primes, the density in progressions follows from the aforementioned generalisation of \cite{foi}, Theorem \ref{GFI}. Furthermore the $L^q$ estimate \eqref{Lqrestrctionintro} can be proved in a straightforward manner by using ideas of Bourgain \cite{bou}. Consequently the main work consists in finding a suitable majorant. 

To describe the construction of the majorant we recall the weighted indicator function $\Lambda^\Lambda(n)$ given in \eqref{LLdef}. We construct a majorant function $\Lambda^+(n)\geq \Lambda^\Lambda(n)$ that, to prove Theorem \ref{Goal}, has to fulfill two properties. First, the majorant cannot overestimate by too much. We have to show that
\begin{align}\label{alpha+1}
\lim_{x\to \infty}\frac{\sum_{n\leq x}\Lambda^+(n)}{\sum_{n\leq x}\Lambda^\Lambda(n)}<3.
\end{align}
Second, $\Lambda^+$ should behave pseudorandomly \eqref{dpseudorandom}. To achieve these conditions, we use the following steps.
\begin{enumerate}
\item In the construction of $\Lambda^+$ we majorise the inner occurrence of $\Lambda$ by the linear sieve.
\item We decompose the remaining outer von Mangoldt function of $\Lambda^+$ into Type I and II sums.
\item We prove minor arc bounds for these Type I and II sums.
\end{enumerate}
These three steps interact and restrict each other in the following manner.

We start with the minor arc bound, step 3, since it restricts the range of Type I/II sums and the level of the sieve. Type I sums in our case are of the form
\begin{align}\label{T1Intro}
\sum_{d\leq D_I} \bigl|\sum_{\substack{dn\leq x\\ dn=k^2+l^2}}e(\gamma dn)\omega(l) \bigr|,
\end{align}
for some weight $\omega$. We estimate them successfully as long as 
\begin{align}\label{typeIrangecondint}
D_I \leq x^{1/2}(\log x)^{-B}
\end{align}
for some sufficiently large $B$. The Type II sums are more intricate and for them the level of the employed sieve from step 1 that we denote by $D_S$ plays a role. After opening the sieve we have to estimate
\begin{align}\label{T2Intro}
\sum_{\substack{c\leq D_S\\ U_1\leq d \leq U_2}} \lambda_c \beta_d \sum_{\substack{dn\leq x \\ \substack{dn=k^2+l^2 \\ c|l}}}\alpha_n e(\gamma dn)
\end{align}
for some $U_1$ and $U_2$ that depend on the decomposition used. Here, as usual for Type II minor arc sums, $U_1$ cannot be too small. This condition is harmless and we are mostly concerned with the size of $U_2$. To estimate \eqref{T2Intro}, we follow the arguments presented in section 8 of \cite{foi} to step into the Gaussian integers and afterwards apply Cauchy's inequality. In the resulting object the innermost sums are of the form
\begin{align}\label{einllat}
\sum_{\substack{|m|^2\sim M\\ \substack{ c_1|\Re(m\overline{l_1})\\ c_2|\Re(m\overline{l_2})}}}e(\gamma[|l_1|^2-|l_2|^2] |m|^2),
\end{align}
where the sum runs over Gaussian integers $m$, $c_i$ are square free integers less or equal than $D_S$, and $l_i$ are primitive Gaussian integers. The range of $M$ we need to consider depends on $U_1$ and $U_2$. The summation condition in \eqref{einllat} means that we sum over $m$ lying in some lattice depending on $l_i$ and $c_i$, $i\in \{1,2\}$. Since the lattice is two dimensional the first and second successive minima of it form a basis. Using this basis we can sum nontrivially as long as the discriminant of the lattice is somewhat smaller than $M$. This results in the following restriction for the level of the sieve from step 1 and the Type II range $U_2$ from step 2. We succeed if
\begin{align}\label{typeIIrangedcondint}
D_S^2U_2\leq x(\log x)^{-B}.
\end{align}

Step 2, the dissection into Type I and II sums, is done with the above range conditions in mind. For the Type I sums \eqref{T1Intro} it is enough that \eqref{typeIrangecondint} is fulfilled. In the Type II case we have the restriction \eqref{typeIIrangedcondint}. Consequently we can increase $D_S$ and so make the majorant tighter, if $U_2$ is small. The commonly used identities of Vaughan and Heath-Brown are not suitable to reach the required relative density \eqref{alpha+1}. Our tool of choice is the combinatorical dissection described in \cite{dfi}. It gives us admissible Type I sums and Type II sums with
\begin{align}\label{U_2condint}
U_2=x^{1/3}.
\end{align}
Since our sequence is sparser than the primes, when applying the dissection we use an additional sieve not to lose a crucial logarithmic factor.
 
We conclude this subsection by giving details for step 1. By \eqref{typeIIrangedcondint} and \eqref{U_2condint} we can prove minor arc estimates if
\begin{align*}
D_S\leq x^{1/3}(\log x)^{-B}
\end{align*}
for some sufficiently large $B>0$. We observe that in the sum
\begin{align*}
\sum_{n\leq x}\Lambda(n)\sum_{n=k^2+l^2}\Lambda(l)
\end{align*}
we have $l\leq \sqrt{x}$. So by the theory of the linear sieve this range of $D_S$ means a straightforward application of an upper bound sieve on the inner $\Lambda$ closely misses the required relative density \eqref{alpha+1}. We get the necessary saving by sieve switching. This idea was used by Chen \cite{che} to show that there are infinitely many primes $p$ such that $p+2$ has at most two prime factors. Since we construct a majorant and two prime indicators occur, our problem is related to finding good upper bounds for the number of twin primes. This upper bound twin prime problem was considered, among others, by Pan \cite{pan}. We follow his work and use Buchstab's identity to get a majorant for the inner von Mangoldt function that takes the form
\begin{align*}
\Lambda(l)\leq f_1(l)-f_2(l)+f_3(l).
\end{align*}
Here the functions $f_i$ are weighted indicator functions for integers having prime divisors in certain ranges only. We apply weighted upper and lower bound linear sieves $\omega^\pm$ on $f_1$ and $f_2$ respectively to arrive at
\begin{align*}
\Lambda(l)\leq \omega^+(l)-\omega^-(l)+f_3(l).
\end{align*}
For the contribution of $f_3$ we switch to sieve the outer occurrence of $\Lambda$. So the majorant is given by
\begin{align*}
\Lambda^+(n)=\Lambda(n)\sum_{n=k^2+l^2}\bigl(\omega^+(l)-\omega^-(l)\bigr)+\Omega^+(n)\sum_{n=k^2+l^2}f_3(l),
\end{align*}
where $\Omega^+$ is an upper bound linear sieve.

We remark that there is another type of transference Circle Method that is used in the result on Vinogradov's Theorem with almost twin primes by Matomäki and Shao \cite{ms}, and the related work of Teräväinen \cite{te}. It has some similarities to the  strategy of \cite{mms} that we use here. In both cases convolution with Bohr sets is used to make some weighted indicator set-like. However, the underlying motivation is different. The result of Fouvry and Iwaniec gives an asymptotic for the sum over \eqref{LLdef}, but in many cases one relies on sieve methods to get lower bounds. One the one hand, our problem is that the Fourier transform of $\Lambda^\Lambda$ is difficult. By the parity problem, any minorant will require the Type II / II case that we are unable to handle, so sieve minorants do not help us. However, since the factor $3$ in \eqref{alpha+1} is larger than $2$, we do not run into the parity barrier and so have a chance of constructing a majorant that does not use the bad Type II / II case. On the other hand, in cases where one uses a sieve minorant, it usually is not a non negative function. For that reason it cannot be used in a straight forward manner in an additive context. If for example $\nu(n)\leq f(n)$ for any $n$, it may not necessarily hold that 
\begin{align*}
\sum_{n_1+n_2+n_3=x}\nu(n_1)\nu(n_2)\nu(n_3)\leq \sum_{n_1+n_2+n_3=x}f(n_1)f(n_2)f(n_3),
\end{align*}
if $\nu$ takes negative values. In \cite{ms} and \cite{te} the convolution with Bohr sets makes the functions set-like and in particular non negative, so that this problem can be circumvented. See also the author's paper \cite{gr19} for further development of this strategy.

\subsection{Outline and Notation}
We define for any arithmetic function $\omega: \mathbbm{N}\to \mathbbm{C}$
\begin{align*}
\Lambda^{\omega}(n)=\Lambda(n)\sum_{n=k^2+l^2}\omega(l),
\end{align*} so that as in the previous subsection $\Lambda^\Lambda$ becomes a weighted indicator function closely related to our prime set of interest. Throughout this paper we fix $\chi$ to be the nontrivial character to the modulus $4$ and set 
\begin{align}\label{Hdef}
H=2\prod_{p}\Bigl(1-\frac{\chi(p)}{(p-1)(p-\chi(p))} \Bigr).
\end{align}
By (1.5) of \cite{foi} we have for any fixed $A>0$ the asymptotics
\begin{align}\label{foiasymp}
\sum_{n\leq x}\Lambda^\Lambda(n)= Hx+O_A(x(\log x)^{-A}).
\end{align}
Error terms of this form and related bounds for the length of arithmetic progressions appear on various occasions. We reserve the letter $A$ for them.

We want to apply the transference principle as used in \cite{mms}. For this we need, among other things, to know something about the distribution of $\Lambda^\Lambda$ in arithmetic progressions. This is achieved by proving a Siegel-Walfisz like theorem. 
The result is also needed at several other occasions, so we state it at this point. Related to $H$ and similar as in \cite{foi} there is the Euler factor
\begin{align*}
\psi(l):=\prod_{p\nmid l}\Bigl(1-\frac{\chi(p)}{p-1} \Bigr).
\end{align*}
The distribution in residue classes further depends on the local density
\begin{align*}
\varrho_l(q,a):=\bigl|\{k \mod (q)\text{ : }k^2+l^2\equiv a(q) \}\bigr|.
\end{align*}
By $\tau(n)$ we denote the number of divisors of $n$. With this notation we can state the following generalization of \eqref{foiasymp} and Theorem 1 in \cite{foi}.
\begin{theorem}
\label{GFI}
Let $A>0$, $|\omega(l)|\ll (\log 2l)^{c_1}\tau(l)^{c_2}$ for some $c_1,c_2$.  We have uniformly in $q\leq (\log x)^A$ and $a$ with $(a,q)=1$
\begin{align*}
\sum_{\substack{n\leq x \\n \equiv a (q)}}\Lambda^\omega(n)&=\frac{1}{\varphi(q)}\sum_{\substack{k^2+l^2\leq x\\}}\omega(l)\varrho_l(q,a)\psi(lq)+O_{A,c_1,c_2}(x(\log x)^{-A}).
\end{align*}
\end{theorem}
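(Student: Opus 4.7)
The natural first move is to detect the residue class by Dirichlet characters: for $(a,q)=1$,
\[
\sum_{\substack{n\leq x\\n\equiv a(q)}}\Lambda^\omega(n)=\frac{1}{\varphi(q)}\sum_{\xi\bmod q}\overline{\xi}(a)\sum_{\substack{n\leq x\\(n,q)=1}}\xi(n)\Lambda^\omega(n).
\]
Since there are at most $\varphi(q)\leq(\log x)^A$ characters $\xi$, it is enough to show that the principal character produces the claimed main term and that every non-principal $\xi$ contributes $O_{A',c_1,c_2}(x(\log x)^{-A'})$ for a suitable $A'$ depending on $A$, uniformly in $q\leq(\log x)^A$. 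This reduces Theorem \ref{GFI} to a character-twisted version of Theorem 1 of \cite{foi}.

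I would then rerun the argument of \cite{foi} keeping the factor $\xi(n)=\xi(k^2+l^2)$ in place. Viewing $n=|z|^2$ for $z=k+il\in\mathbb{Z}[i]$, the twist $\xi(|z|^2)$ is constant on residues of $z$ modulo $q\mathbb{Z}[i]$, so in each combinatorial piece one can split the Gaussian variable into at most $q^2\leq(\log x)^{2A}$ residue classes and arrive at sums of the same shape as in \cite{foi} but with additional linear congruence conditions on $k$ and $l$. After this reduction, the Type I sums can be evaluated directly by counting lattice points in sectors, residue class by residue class, and already furnish the expected main term. The Type II sums are treated exactly as in section 8 of \cite{foi}: the descent into $\mathbb{Z}[i]$ followed by Cauchy--Schwarz reduces matters to counting Gaussian integers in a fixed two-dimensional lattice, and the extra congruence condition on the Gaussian variable only restricts the count to a sublattice of bounded index, which does not affect the cancellation obtained from the successive minima.

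The main technical obstacle is unfolding the main term into the claimed closed form involving $\varrho_l(q,a)\psi(lq)$. The Type I analysis yields, for each $\xi$, an Euler product whose local factors at primes $p\mid q$ must be separated from those at $p\nmid q$. Summing against $\overline{\xi}(a)/\varphi(q)$ and invoking orthogonality reassembles the local factors at $p\mid q$ into the local solution count $\varrho_l(q,a)$, while the remaining Euler product over $p\nmid lq$ collapses to $\psi(lq)$ once the factors at primes dividing $l$ and $q$ are removed to avoid double counting. The uniformity in $q$ is essentially free since $q$ is bounded by a fixed power of $\log x$; the only places where attention is required are in the Type I error terms, where divisor sums involving $\omega$ must be bounded using the hypothesis $|\omega(l)|\ll(\log l)^{c_1}\tau(l)^{c_2}$, and in carrying the saving $(\log x)^{-A'}$ through each of the $O((\log x)^{2A})$ residue classes introduced by splitting $z$ modulo $q\mathbb{Z}[i]$.
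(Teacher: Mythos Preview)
Your opening strategy contains a genuine error: the claim that every non-principal character $\xi$ contributes $O_{A'}(x(\log x)^{-A'})$ is false. The main term in the theorem depends on $a$ through $\varrho_l(q,a)$, and this dependence is real (for instance, the derived quantity $\Xi(q,a)$ vanishes for $a\equiv 3\pmod 4$ and is nonzero for $a\equiv 1\pmod 4$). If only the principal character produced a main term, the result would be the same for every reduced residue $a$, which it is not. So for at least some non-principal $\xi$ the twisted sum $\sum_{n\leq x}\xi(n)\Lambda^\omega(n)$ has size $\asymp x$, not $x(\log x)^{-A'}$. Your third paragraph seems to sense this, speaking of each $\xi$ yielding an Euler product and then recombining via orthogonality into $\varrho_l(q,a)$; but that directly contradicts the plan announced in your first paragraph. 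One of the two must go, and it is the first.

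The paper avoids characters entirely. It builds the congruence $n\equiv a\pmod q$ directly into the sequence, setting $a_n=\sum_{n=k^2+l^2,\,n\equiv a(q)}\omega(l)$, and then runs Vaughan's identity on $\sum_{n\leq x}\Lambda(n)a_n$ exactly as in \cite[Section 7]{foi}. The congruence sums $A_d(x)$ are approximated by main terms carrying the two-condition local density $\varrho_l(d;q,a)=\varrho_l(d)\varrho_l(q,a)$ for $(d,q)=1$, and completing the relevant Dirichlet series yields the factor $\psi(lq)$ directly, with the $a$-dependence already sitting in $\varrho_l(q,a)$ and no orthogonality step required. The linear remainder is bounded by the quantitative form of \cite[Lemma 3']{foi} (with explicit $q$-dependence $q^3$), and the bilinear remainder is handled by splitting the Gaussian variables $z,w$ of \cite[Section 8]{foi} into residue classes modulo $q$, incurring only an acceptable loss of $(\log x)^{O_A(1)}$. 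Your second and third paragraphs could be reorganised into a correct character-based proof along these lines, but you would have to extract a nontrivial main term from \emph{every} character and then sum, which is more bookkeeping than the paper's direct route.
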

Besides the residue class condition, this result also generalises \cite{foi} by allowing more general types of weights. In our case the weight $\omega$ is either an indicator function closely related to the primes or some sieve (in which case it may depend on $x$). In both cases we expect uniformly in $T\leq \sqrt{x}$ and $q\leq (\log x)^A$ the asymptotic
\begin{align}
\sum_{\substack{l\leq T\\l\equiv a (q)}}\omega(l)\psi(ql)=\Theta(q,a)\psi'(q)\sum_{l\leq T}\omega(l)\psi(l)+O_A(\sqrt{x}(\log x)^{-A}), \label{1}
\end{align}
where
\begin{align*}
\psi'(q)=\prod_{p|q}\Bigl(1-\frac{\chi(p)}{p-1}\Bigr)^{-1}
\end{align*} 
and
\begin{align}
\Theta(q,a)=\begin{cases}\frac{1}{\varphi(q)}, & \text{ if } (q,a)=1 \\
0, & \text{ else.} \end{cases}\label{2}
\end{align}
Setting for $(a,q)=1$
\begin{align}\label{xidef}
\Xi(q,a)=\frac{\psi'(q)}{\varphi(q)}\sum_{c(q)^*}\varrho_c(q,a)
\end{align}
and $\Xi(q,a)=0$ else, we infer the following special case of Theorem \ref{GFI}. It is deduced after the proof of Theorem \ref{GFI} in subsection \ref{corosubs}.

\begin{cor}\label{cor} 
Let $\omega(l)$ and $A$ be as required in Theorem \ref{GFI}. If further $\omega(l)$ obeys \eqref{1}, we have uniformly in $q\leq (\log x)^A$ and $a \mod (q)$ that 
\begin{align*}
\sum_{\substack{n\leq x \\n \equiv a (q)}}\Lambda^\omega(n)
=\frac{\Xi(q,a)}{\varphi(q)}\sum_{k^2+l^2\leq x}\omega(l)\psi(l)+O_A(x(\log x)^{-A}).
\end{align*}
Here $\Xi(q,a)$ is given by \eqref{xidef}. $\Xi(q,a)$ is multiplicative in $q$, $q$-periodic in $a$, and for odd primes $p$ and $r\in \mathbbm{N}$ we have
\begin{align}\label{Xipowerblindeq}
\Xi(p^r,a)=\Xi(p,a).
\end{align}
Furthermore it holds that
\begin{align*}
\Xi(2,1)&=1\\
\Xi(4,1)&=2\\
\Xi(4,3)&=0,
\end{align*}
and for $r\in \mathbbm{N}_{\geq 2}$
\begin{align*}
\Xi(2^r,a)=\Xi(4,a).
\end{align*}
\end{cor}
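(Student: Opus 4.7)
The plan is to derive the asymptotic from Theorem \ref{GFI} by inserting the hypothesis \eqref{1} into the $l$-sum on its right hand side, and then to read off the structural statements about $\Xi$ directly from its definition.

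First I handle the case $(a,q)>1$ separately, where $\Xi(q,a)=0$ by definition. Here the left hand side is supported on prime powers $n$ sharing a factor with $q$; since $q\le(\log x)^A$ there are only $O((\log x)^{A+1})$ such $n$ up to $x$, and the divisor bound on representations $n=k^2+l^2$ together with $|\omega|\ll(\log x)^{O(1)}$ gives $\Lambda^\omega(n)\ll x^\epsilon$, which is absorbed by the stated error. For $(a,q)=1$, I exploit that $\varrho_l(q,a)$ depends only on $l\bmod q$ and split the $l$-sum:
\[
\sum_{k^2+l^2\le x}\omega(l)\varrho_l(q,a)\psi(lq)=\sum_{c(q)}\varrho_c(q,a)\sum_{|k|\le\sqrt{x}}\sum_{\substack{l\le\sqrt{x-k^2}\\ l\equiv c(q)}}\omega(l)\psi(lq).
\]
Applying \eqref{1} with $T=\sqrt{x-k^2}$ produces the main term $\Theta(q,c)\psi'(q)\sum_{l\le\sqrt{x-k^2}}\omega(l)\psi(l)$ and an error $O(\sqrt{x}(\log x)^{-A'})$ uniformly in $k$. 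Only reduced residues $c$ contribute since $\Theta(q,c)=0$ otherwise; summing over $|k|\le\sqrt{x}$ reassembles the archimedean integral $\sum_{k^2+l^2\le x}\omega(l)\psi(l)$. Combining with the prefactor $1/\varphi(q)$ from Theorem \ref{GFI} and recognising the result as $\Xi(q,a)/\varphi(q)$ times this integral gives the claimed main term. The accumulated error is at most $q^2\cdot x(\log x)^{-A'}\le x(\log x)^{2A-A'}$, admissible by choosing $A'$ sufficiently large in terms of $A$.

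The structural properties of $\Xi$ are then immediate. Periodicity in $a$ follows from the definition of $\varrho_c$. Multiplicativity in $q$ uses the Chinese Remainder Theorem: $\varphi$ and $\psi'$ are multiplicative, and for $(q_1,q_2)=1$ the counting of pairs $(k,c)\pmod{q_1q_2}$ with $(c,q_1q_2)=1$ and $k^2+c^2\equiv a$ splits as a product of the analogous counts mod $q_1$ and $q_2$. The identity $\Xi(p^r,a)=\Xi(p,a)$ for odd $p$ reduces to showing that each solution mod $p$ lifts to exactly $p^{r-1}$ solutions mod $p^r$, which is Hensel's lemma applied to $F(k,c)=k^2+c^2-a$ using that $\partial_c F=2c$ is a unit mod $p$ whenever $p$ is odd and $(c,p)=1$; the factor $p^{r-1}$ matches $\varphi(p^r)/\varphi(p)$. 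The dyadic analogue $\Xi(2^r,a)=\Xi(4,a)$ for $r\ge 2$ is handled by the same lifting, starting from mod $4$ so that $\partial_c F=2c$ already has the necessary $2$-adic valuation once $(c,2)=1$. The three numerical values at $q\in\{2,4\}$ come from direct enumeration.

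The main technical subtlety is the error bookkeeping: the power-of-log saving in \eqref{1} has to be chosen strictly greater than $A$ plus a margin covering the double summation over $k$ and residues $c\bmod q$, but since \eqref{1} is assumed to hold with arbitrary $A'$ this causes no problem. Everything else is algebraic, either via CRT and local lifting or via a finite case check.
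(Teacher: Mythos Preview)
Your argument is correct and follows essentially the same route as the paper: split the main term of Theorem~\ref{GFI} according to the residue of $l$ modulo $q$, apply the hypothesis~\eqref{1} to each class, and read off~$\Xi(q,a)$; then deduce the structural properties of~$\Xi$ from CRT and Hensel lifting. Your treatment of the case $(a,q)>1$ is a useful addition, since the paper only invokes Theorem~\ref{GFI} for $(a,q)=1$ and leaves this case implicit.

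One small imprecision: for $p=2$ you claim the lifting works ``starting from mod~$4$'', but the derivative $\partial_c F=2c$ has $2$-adic valuation exactly~$1$ when $c$ is odd, so the strong form of Hensel requires the base case at modulus $2^{2\cdot 1+1}=8$, not $4$. Concretely, solutions modulo~$4$ do not lift uniformly two-to-one to modulus~$8$; rather half of them lift to four and half to none. The paper handles this by computing $\Xi(2^r,a)$ for $r\le 3$ by hand and only then invoking Hensel for $r\ge 3$. This is a one-line fix (verify $\Xi(8,a)=\Xi(4,a)$ directly) and does not affect the rest of your argument.
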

The function $\Xi(q,a)$ encodes biases of Fouvry-Iwaniec primes to certain residue classes. Note that $\Xi(q,a)$ can be quite far away from $1$. In fact there are arbitrary large $q$ such that
\begin{align*}
\Xi(q,a_1)&\ll (\log \log q)^{-1} \\
\Xi(q,a_2)&\gg \log \log q
\end{align*}
for suitable $a_1$ and $a_2$. This can be seen by considering that 
\begin{align*}
\sum_{c(p)}\rho_c(p,a)
\end{align*}
is for a fixed prime $p$ constant for all $a$ with $p\nmid a$. By removing the contribution of $c\equiv 0(p)$ to this sum, a short calculation shows the following. If $a$ is a square modulo $p$, then
\begin{align*}
\Xi(p,a)=1-\frac{1}{p+O(1)}
\end{align*}
and if it is not, then
\begin{align*}
\Xi(p,a)=1+\frac{1}{p+O(1)}.
\end{align*}
By choosing $q=\prod_{p\leq z} p$ and $a_1$ such that it is not a square modulo any of these primes, we get
\begin{align*}
\Xi(q,a_1)&=\prod_{p\leq z}\bigl(1-\frac{1}{p+O(1)}\bigr)\\
&\ll (\log z)^{-1}\\
&\ll (\log \log q)^{-1}.
\end{align*}
Similarly, we can construct $a_2$ by choosing it to be a square residue modulo primes up to $z$.

To mask local effects to small moduli Green (see \cite{gre}) uses a device that is now mostly called $W$-trick. Instead of considering for example the von Mangoldt function $\Lambda(n)$ it is useful to look at
\begin{align*}
\Lambda(Wn+b),
\end{align*}
where $W$ is the product of small primes and $b$ a primitive residue class to that modulus. We need to handle different $W$-tricked functions related to our problem, so we fix the following notation. Assume we are given parameters $x$, $W$, a residue class $b\in [W]=\{1,\ldots, W\}$, and $f:[x]\to \mathbbm{C}$. We set $N=\floor{x/W}$ and define $f_{W,b}:[N]\to \mathbbm{C}$, the normalised $W$-tricked function, by
\begin{align}\label{normal}
f_{W,b}(n):=\frac{ \varphi(W)}{\Xi(W,b)W H}f(Wn+b).
\end{align}
The normalisation is chosen in this way to ensure that we have for admissible $b$
\begin{align*}
\sum_{n\leq N}\Lambda^{\Lambda}_{W,b}(n)= N\bigl(1+o(1)\bigr).
\end{align*}
Using this notation we can state the transference result. It gives us three conditions that are sufficient for proving Theorem \ref{Goal}. The statement and its proof are based on Theorem 2.1 of \cite{mms}. 
\begin{theorem} \label{MT}
Let $x\equiv 3(4)$ be a large integer. Let $\Lambda^+$ be a function with $\Lambda^{\Lambda}(n)\leq \Lambda^+(n)$ for every $n\in [x]$. Let $\alpha^+>0.1$ and $\alpha^-,\eta>0$. Let $W=2\prod_{p\leq w}p$, where $w=0.1 \log \log x$ and let $N=\floor{x/W}.$ Assume that the following conditions hold for every residue class $b (W)$ with $(b,W)=1$, $b\equiv 1(4)$:
\begin{enumerate}[I.]
\item (Pseudorandomness of the majorant) For all $\gamma\in \mathbbm{R}$ we have
\begin{align*}
\Bigl|\sum_{n\leq N}\Lambda^+_{W,b}(n)e(\gamma n)-\alpha^+ \sum_{n\leq N}e(\gamma n)|\leq \alpha^+ \eta N. 
\end{align*}
\item ($L^q$ estimate) For some $2<q<3$ we have 
\begin{align*}
\int_0^1 \bigl|\sum_{n\leq N} \Lambda^{\Lambda}_{W,b}(n)e(\gamma n)\bigr |^q d\gamma\ll N^{q-1}, 
\end{align*}
where the implied constant does not depend on $W$ or $b$.
\item (Density in progressions) For each arithmetic progression $P\subset [N]$ with $|P|\geq \eta N$ we have \begin{align*}
\sum_{n\in P}\Lambda^{\Lambda}_{W,b}(n)\geq \alpha^{-}|P|.
\end{align*}
\end{enumerate}
If $\alpha^+<3\alpha^-$ and $\eta$ is small enough in terms of $3\alpha^--\alpha^+$, then $x$ can be written as the sum of three Fouvry-Iwaniec primes.
\end{theorem}
In our case $\alpha^-$ is arbitrary close to one and $\alpha^+=\alpha^+(x)$ is a function of $x$ that fulfills
\begin{align*}
\alpha^+(x)<3-\delta+o(1)
\end{align*}
for some fixed $\delta>0$. 

The structure of the paper is as follows. In the next section we prove Theorem \ref{MT}, Theorem \ref{GFI} and preliminary results for the majorant. Theorem \ref{MT} is proved by using the transference principle. We do not go in the details, like dissecting the related functions into uniform and antiuniform parts, but instead make use of a proposition in \cite{mms} that can readily be applied and incorporates much of the necessary work. This also means that we do not deal with the combinatorical work that gives the crucial density ratio of $3$. The proof of Theorem \ref{GFI} requires only a slight extension of the results in \cite{foi}. Afterwards we apply the result to confirm that condition III. of Theorem \ref{MT} holds for $\alpha^-$ close to $1$. The preliminary results for the majorant in subsection \ref{secbuch} are of two kinds. First we state a strong asymptotics for rough numbers. This problem was first considered by Buchstab and we employ a result of De Bruijn \cite{deB} to gain a better error term. Further we state the sieve we intend to use and importing the required results.

In section \ref{sec3} we construct the majorant $\Lambda^+$. The goal is to show that the normalised $W$-tricked version of it fulfills condition I. of Theorem  \ref{MT} for some a suitable $\alpha^+<3-\delta$.
\begin{prop} \label{prmajo} Let $x$, $N$, $W$, $b$, $\eta$ as in Theorem \ref{MT}. Let further $\Lambda^+$ as in \eqref{L+def}. There exists a function $\alpha^+(x)$ with $\alpha^+(x)<2.9739$ for all sufficiently large $x$, such that the following statement holds. We have for all $N>N_0(\eta)$ and all $\gamma \in \mathbbm{R}$ 
\begin{align*}
|\sum_{n\leq N}\Lambda^+_{W,b}(n,x)e(\gamma n)-\alpha^+(x)\sum_{n\leq N}e(\gamma n)|\leq \alpha^+(x) \eta N.
\end{align*}
\end{prop}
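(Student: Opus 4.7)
The plan is to carry out a standard major/minor arc dissection of $\mathbb{R}/\mathbb{Z}$, dispose of the minor arcs via the Type I/II machinery foreshadowed in the introduction, and produce the correct main term on the major arcs via Theorem \ref{GFI}. Fix $Q=(\log N)^{C}$ for $C=C(\eta)$ sufficiently large, and declare the major arcs $\mathfrak{M}$ to consist of those $\gamma$ admitting a Dirichlet approximation $\gamma=a/q+\beta$ with $\gcd(a,q)=1$, $q\leq Q$, $|\beta|\leq Q/N$; let $\mathfrak{m}$ be the complement. On $\mathfrak{m}$ the standard bound $|\sum_{n\leq N}e(\gamma n)|\ll N/Q$ makes the target $\alpha^+(x)\sum_{n\leq N}e(\gamma n)$ negligible, so it suffices to establish $|\widehat{\Lambda^+_{W,b}}(\gamma)|=o(N)$ pointwise on $\mathfrak{m}$, and to deliver the asymptotic with constant $\alpha^+(x)$ on $\mathfrak{M}$.

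On $\mathfrak{m}$, I would insert the definition \eqref{L+def}, open the upper/lower bound sieves $\omega^\pm$ and $\Omega^+$ as divisor sums of length at most $D_S\leq x^{1/3}(\log x)^{-B}$, and undo the $W$-normalisation \eqref{normal}. The pieces still carrying an outer von Mangoldt factor are split via the Duke--Friedlander--Iwaniec combinatorial identity into Type I sums of the shape \eqref{T1Intro} with $D_I\leq x^{1/2}(\log x)^{-B}$ and Type II sums of the shape \eqref{T2Intro} with $U_2\leq x^{1/3}$; the pieces whose outer variable carries an upper bound sieve $\Omega^+$ are directly of Type I shape after opening $\Omega^+$. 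The Type I and Type II minor arc bounds established in the main technical sections of the paper, combined with summation over the sieve divisors against the $\tau$-bounded weights, then yield $O(N(\log N)^{-A})$ uniformly on $\mathfrak{m}$.

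On $\mathfrak{M}$, for $\gamma=a/q+\beta$, I would split into residues modulo $q$ and apply Theorem \ref{GFI} with modulus $qW$ to each of the constituents $\Lambda\cdot(\omega^+-\omega^-)(l)$ and $\Omega^+\cdot f_3(l)$, which is legitimate because the sieve weights obey the $\tau$-growth required there. Partial summation against $e(\beta W n)$ collapses the inner sum to $\alpha^+(x)\sum_{n\leq N}e(\beta n)$ times a Gauss-type sum in $c\,(\mathrm{mod}\,q)$ weighted by $\Xi(\cdot)$; the $W$-trick and the factor $\Xi(W,b)^{-1}$ built into \eqref{normal} force this Gauss sum to be $o(1)$ whenever $q>1$, matching $\widehat{\mathbbm{1}_{[N]}}(\gamma)=o(N)$ on that part of $\mathfrak{M}$, and to equal $1$ when $q=1$. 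This simultaneously yields the asymptotic and the independence of $\alpha^+(x)$ from the admissible residue $b$.

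The main obstacle is the numerical bound $\alpha^+(x)<2.9739$. By \eqref{foiasymp} this is equivalent to
\begin{align*}
\frac{1}{Hx}\sum_{n\leq x}\Lambda^+(n)<2.9739+o(1).
\end{align*}
The strategy is to insert into $\Lambda^+$ the Buchstab-driven Chen decomposition $\Lambda(l)\leq f_1(l)-f_2(l)+f_3(l)$, replace $f_1,f_2$ by their weighted linear sieve upper/lower bound majorants at level $D_S=x^{1/3-\epsilon}$, and evaluate each contribution asymptotically via Theorem \ref{GFI} summed against the standard machinery of Iwaniec's linear sieve. The $f_3$-contribution is treated by \emph{sieve switching}: the outer variable is sieved at the same level in place of the inner prime $l$, which is admissible because $f_3$ restricts $l$ to have prime factors only in a controlled range. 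Careful optimisation of the buffer parameter inside the Buchstab split, in combination with this switching, is what recovers the logarithmic factor that a single upper bound sieve would otherwise lose, and drives the resulting constant strictly below $3$.
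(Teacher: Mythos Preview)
Your overall architecture matches the paper's: major/minor arc split, Duke--Friedlander--Iwaniec decomposition plus Type I/II bounds on $\mathfrak{m}$, and arithmetic-progression asymptotics plus partial summation on $\mathfrak{M}$. The minor arc treatment you sketch is exactly what the paper does in Lemma~\ref{decomp} together with Lemmas~\ref{T2boundl} and~\ref{typIminor}.

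There is one concrete gap on the major arcs. You propose to ``apply Theorem~\ref{GFI} with modulus $qW$ to each of the constituents $\Lambda\cdot(\omega^+-\omega^-)(l)$ and $\Omega^+\cdot f_3(l)$''. This is fine for the first two pieces $\Lambda_1^+,\Lambda_2^+$, but not for $\Lambda_3^+=\Omega^+\cdot f_3$: Theorem~\ref{GFI} evaluates $\sum_{n\equiv a(q)}\Lambda(n)\sum_{n=k^2+l^2}\omega(l)$ and genuinely requires the \emph{outer} weight to be $\Lambda$ (its proof runs through Vaughan's identity and the bilinear machinery of \cite{foi}). With the outer weight replaced by a sieve $\Omega^+$ the theorem simply does not apply, and the $\tau$-growth hypothesis you cite concerns only the \emph{inner} weight $\omega(l)$. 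The paper handles $\Lambda_3^+$ separately (case $i=3$ of Lemma~\ref{majolem}): one opens $\Omega^+$ as a divisor sum of level $x^{1/2-\delta_0}$ and appeals directly to the linear remainder bound \eqref{linrem} from the Fouvry--Iwaniec Type~I analysis, which gives the required uniform asymptotic in progressions. You already invoke exactly this mechanism on $\mathfrak{m}$; you need it on $\mathfrak{M}$ as well.

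A smaller point: your account of the numerical bound $\alpha^+(x)<2.9739$ is schematic. The paper fixes explicit parameters $\xi=0.265$, $\xi_1=0.183$, $\delta_0=10^{-7}$, uses Pan's three-term inequality \eqref{panineq} (not a generic Buchstab split), and reduces to the three explicit integrals \eqref{C_1bound}, \eqref{C_2bound}, \eqref{C_3cbound} which are then summed numerically. Without committing to specific parameters and the specific combinatorial inequality, ``careful optimisation'' does not by itself establish the stated constant.
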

The construction of the majorant is done by the use of sieves and the switching principle as described in the introduction. Furthermore we need to check that the majorant is not overestimating by too much, see \eqref{alpha+1}. These results are stated in Lemma \ref{majolem} and proved thereafter. We conclude section 3 by applying the combinatorical decomposition of \cite{dfi} on $\Lambda^+$ to split it into Type I and II sums with suitable ranges.

In section \ref{sec4} we show that $\Lambda^+_{W,b}$ is pseudorandom for $\gamma$ lying in the major arcs, a partial result towards Proposition \ref{prmajo}. Let $A_\mathfrak{M}=10^4$. For a given $x$ we set
\begin{align}\label{Mqadef}
\mathfrak{M}(q,a)=\{\gamma\in[0,1]: |\gamma-a/q|\leq (\log x)^{A_\mathfrak{M}}x^{-1} \}.
\end{align}
The set of major arcs is given by
\begin{align}\label{Mganzdef}
\mathfrak{M}=\bigcup_{q\leq (\log x)^{A_\mathfrak{M}}}\bigcup_{\substack{a(q)^*}}\mathfrak{M}(q,a).
\end{align}
The size of $A_\mathfrak{M}$ plays no role for the major arc analysis, as long as it is fixed. For the minor arcs, however, we require it to be sufficiently large and above choice is admissible. We consider the major arcs by using Lemma \ref{majolem} together with summation and integration by parts.

In section \ref{sec5}, that can be seen as the core piece for the proof of Theorem \ref{Goal}, we prove minor arc pseudorandomness for the majorant. The minor arcs are given as the complement of the major arcs 
\begin{align*}
\mathfrak{m}=[0,1]\setminus \mathfrak{M}.
\end{align*}
We consider separately Type I and II sums. The harder Type II case is handled as described in the introduction. This reduces the minor arc Type II bound to well known sums over certain minima, see for example chapter 13 of \cite{ik}. The Type I bounds cause no problem and together with the results of the previous section we deduce Proposition \ref{prmajo}.

In section \ref{sec6} we use the major and minor arc bounds together with a different majorant to show the $L^q$ restriction estimate. This is condition II. of Theorem \ref{MT} and the final ingredient. We use ideas of Bourgain \cite{bou} to prove the restriction estimate.

We conclude the paper by combining the results to prove Theorem \ref{Goal} and Theorem \ref{3apcor}.

\section{Preliminary Results}
In this section we show preliminary results. We start by proving Theorems \ref{MT} and \ref{GFI} that were stated in the previous subsection. Afterwards we show results that are needed for constructing and understanding the majorant. 
\subsection{The Transference Result}
The proof of Theorem \ref{MT} is based on the following result that incorporates the whole transference concept.
\begin{lemma}\label{transfer}
Let $\epsilon,\eta\in(0,1)$. Let N be a positive integer and let $f_1,f_2,f_3:[N]\to \mathds{R}_{\geq 0}$ be functions, with each $f\in \{f_1,f_2,f_3\}$ satisfying the following assumptions:
\begin{enumerate}[i.]
\item There exists a majorant $\nu:[N]\to \mathds{R}_{\geq 0}$ with $f\leq \nu$ pointwise, such that for all $\gamma\in \mathbbm{R}$ we have
\begin{align*}
|\sum_{n\leq N}\nu(n)e(\gamma n)-\sum_{n\leq N}e(\gamma n)|\leq \eta N.
\end{align*}
\item We have 
\begin{align*}
\int_0^1 \bigl|\sum_{n\leq N} f(n)e(\gamma n)\bigr |^q d\gamma\leq K^q N^{q-1}
\end{align*}
for some fixed $q$, $K$ with $K\geq 1$ and $2<q<3$.
\item For each arithmetic progression $P\subset [N]$ with $|P|\geq \eta N$ we have 
\begin{align*}
|P|^{-1}\sum_{n\in P}f(n)\geq 1/3+\epsilon.
\end{align*}
\end{enumerate}
Then for each $n\in [N/2,N]$ we have \begin{align*}
\sum_{n_1+n_2+n_3=n}f_1(n_1)f_2(n_2)f_3(n_3)\geq (c(\epsilon)-O_{\epsilon,K,q}(\eta))N^2,
\end{align*}
where $c(\epsilon)>0$ depends only on $\epsilon$.
\end{lemma}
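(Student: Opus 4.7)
The plan is to follow the transference-principle template of Green \cite{gre}, Green--Tao \cite{gt}, and Matomäki--Maynard--Shao \cite{mms}, of which this lemma is essentially a restatement. For each $f_i$ I will produce a bounded ``dense model'' $\tilde f_i:[N]\to[0,1+o(1)]$ whose averages on long arithmetic progressions match those of $f_i$ up to $o(1)$, and then deduce the three-fold lower bound from a Varnavides-type statement for bounded functions of density exceeding the sharp threshold $1/3$.

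I would construct the models spectrally. Assumption (ii) and Plancherel show that the large-spectrum $\{\gamma:|\widehat{f_i}(\gamma)|\geq\delta N\}$ contains only $O_{K,q}(\delta^{-q})$ elements for any $\delta>0$. From these $O(1)$ frequencies I build a regular Bohr set $B$ of suitably small width $\rho$ and decompose $f_i=f_i^\flat+f_i^\sharp$ with $f_i^\flat:=f_i*\mu_B$. Assumption (i), applied to the majorant $\nu\geq f_i$, forces $f_i^\flat\leq 1+o(1)$ pointwise after a mild truncation, giving $\tilde f_i$; because $B$ is built from $O(1)$ frequencies, $[N]$ splits into $O_{\delta,\rho}(1)$ long arithmetic progressions on which $\mu_B$ is essentially constant, so assumption (iii) transfers to $\tilde f_i$ inheriting AP-density $\geq 1/3+\epsilon-o(1)$.

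The uniform parts $f_i^\sharp$ are controlled via Bourgain's $L^q$--$L^\infty$ interpolation. Any triple involving $f_i^\sharp$ factors in Fourier as $\int \widehat{f_i^\sharp}(\gamma)\widehat{f_j}(\gamma)\widehat{f_k}(\gamma)e(-n\gamma)\,d\gamma$ and is bounded by $\|\widehat{f_i^\sharp}\|_\infty^{3-q}\int|\widehat{f_i}|^q\,d\gamma$. Here $\|\widehat{f_i^\sharp}\|_\infty=o(N)$ since frequencies inside the spectrum are killed by $1-\widehat{\mu_B}$, the rest are already below $\delta N$ by construction, and $\delta$ may be chosen small only after $K,q$ are fixed; the remaining integral is $O(N^{q-1})$ by (ii). Summing the $O(1)$ cross terms yields total error $O_{\epsilon,K,q}(\eta)N^2$.

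It remains to lower bound $\sum_{n_1+n_2+n_3=n}\tilde f_1(n_1)\tilde f_2(n_2)\tilde f_3(n_3)$ when each $\tilde f_i$ is nonnegative, bounded by $1+o(1)$, and of average $\geq 1/3+\epsilon/2$ on every long progression in $[N]$, with $n\in[N/2,N]$. The threshold $1/3$ is sharp (take $\tilde f_i=\mathbbm{1}_{[0,N/3]}$ with $n$ near $N$), and a Varnavides-type averaging around this threshold produces the desired $c(\epsilon)N^2$; this combinatorial core is exactly the content of \cite{mms} and will be quoted directly. The principal obstacle is the dense-model step itself: tuning $\delta$ and $\rho$ so that $\tilde f_i$ is simultaneously pointwise $\leq 1+o(1)$ (using (i) to cap $\nu*\mu_B$) and still retains AP-density $>1/3$ after truncation. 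These constraints pull in opposite directions and force $\eta$ small in terms of $\epsilon$, $K$, and $q$, which is the quantitative assertion of the lemma.
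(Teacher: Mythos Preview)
Your sketch is correct and follows the standard transference-principle argument. However, the paper does not prove this lemma at all: its entire proof reads ``This is Proposition 3.1.\ in \cite{mms}.'' So your proposal is considerably more detailed than the paper's own treatment, which simply imports the result as a black box from Matom\"aki--Maynard--Shao. Your outline (dense-model construction via the large spectrum and a Bohr set, control of the uniform cross terms by the $L^q$ restriction bound and the interpolation $\|\widehat{f^\sharp}\|_\infty^{3-q}\,\|\widehat{f}\|_q^{q}$, and citation of the Varnavides-type threshold result for bounded functions) is exactly the structure of the argument in \cite{mms}, so there is no divergence in approach---you are reconstructing the cited proof rather than offering an alternative.
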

\begin{proof}
This is Proposition 3.1. in \cite{mms}.
\end{proof}
To prove Theorem \ref{MT} we follow closely the proof of \cite[ Theorem 2.1]{mms}. 
 \begin{proof}[Proof of Theorem \ref{MT}]
Let $x$, $W$, $N$, $\Lambda^+$, $\alpha^-$, and $\alpha^+$ be as stated in Theorem \ref{MT}. Choose $b_1$, $b_2$, $b_3\in [W]$, such that
\begin{align*}
b_1+b_2+b_3&\equiv x (W)\\
(b_i,W)&=1\\
b_i&\equiv 1(4),
\end{align*}
which can always be done be the Chinese Remainder Theorem. Consider now the functions
\begin{align*}
f_i(n)&=\frac{\Lambda^{\Lambda}_{W,b_i}(n)}{\alpha^+}
\end{align*}
and
\begin{align*} 
\nu_i(n)&=\frac{\Lambda^+_{W,b_i}(n)}{\alpha^+}.
\end{align*}
As $\Lambda^{\Lambda}\leq \Lambda^+$ we have $f_i\leq \nu_i$. Condition i. for $\nu_i$ is now an immediate consequence of condition I. for $\Lambda^+$. Similarly ii. of the lemma follows from II in the theorem, since $\alpha^+$ is bounded away from $0$. Let now $P$ be as in iii., we then have by III. 
\begin{align*}
\sum_{n\in P}f_i(n)\geq \frac{\alpha^{-}}{\alpha^+}|P|.
\end{align*}
As $\alpha^+<3 \alpha^{-}$ there is an $\epsilon>0$ such that $\alpha^-/\alpha^+>1/3+\epsilon$. 

Let now $n=(x-b_1-b_2-b_3)/W$. Since $n\in [N-3,N]$, we can apply Lemma \ref{transfer} and get 
\begin{align*}
\sum_{n_1+n_2+n_3=n}f_1(n_1)f_2(n_2)f_3(n_3)\geq (c(\epsilon)-O_{\epsilon,K,q}(\eta))N^2.
\end{align*}
The contribution of higher prime powers in either von Mangoldt function to
\begin{align*}
\sum_{n_1+n_2+n_3=n}f_1(n_1)f_2(n_2)f_3(n_3)
\end{align*}
is $O(N^{7/4+\epsilon})$. Recall that $\alpha^+$ is bounded away from $0$. So, if $\eta$ is small enough, for sufficiently large $n$, there are $n_i$ with $n_1+n_2+n_3=n$ such that there are Fouvry-Iwaniec primes $p_i$ with $p_i=Wn_i+b_i$. We conclude
\begin{align*}
x&=Wn+b_1+b_2+b_3\\
&=p_1+p_2+p_3.
\end{align*}
\end{proof}

\subsection{Generalised Fouvry-Iwaniec}
\begin{proof}[Proof of Theorem \ref{GFI}]
We now prove the proposed generalization of the result of Fouvry and Iwaniec. We start by closely following section 7 of \cite{foi} until we identify the main term and are left with estimating linear and bilinear remainder terms. We then show how the linear remainder can be sufficiently bounded by an application of \cite[Lemma 5']{foi}. The bilinear remainder requires only slight modification of the original argument. 

Let $\omega$, $q$ and $a$ be as required in Theorem \ref{GFI}. We write for $n\equiv a(q)$
\begin{align*}
a_n=\sum_{\substack{n=k^2+l^2}}\omega(l)
\end{align*}
and and $a_n=0$ else. We further set
\begin{align*}
P(x)=\sum_{n\leq x}\Lambda(n)a_n,
\end{align*}
which we want to evaluate. In the same way as in the original paper this is done by an application of Vaughan's identity. A central role, see \cite[(7.2)]{foi}, is played by
\begin{align}\label{Acongsum1}
A_d(x,q)=\sum_{\substack{n\equiv 0 (d)\\n\leq x}}a_n. 
\end{align}
Let $z$ and $y$ be parameters. As in \cite[(7.5)]{foi} we have
\begin{align}\label{gfie1}
P(x)=A(x,q;y,z)+B(x,q;y,z)+P(z),
\end{align}
where, as in \cite[(7.6)]{foi} and \cite[(7.8)]{foi},
\begin{align*}
A(x,q;y,z)&=\sum_{b\leq y}\Bigl(A'_b(x,q)-A_b(x,q)\log x-\sum_{c\leq z}\Lambda(c)A_{bc}(x,q) \Bigr)\\
|B(x,q;y,z)|&\leq \sum_{z<d<x/y} \log d \Bigl|\sum_{y<b\leq x/d}\mu(b)a_{bd} \Bigr|.
\end{align*}
Here $A'_b(x)$ denotes the sum in \eqref{Acongsum1} with $a'_n=a_n \log n$. Using \cite[(3.13')]{foi} instead of \cite[(3.13)]{foi} we expect  $A_d(x)$ to be approximated by main terms of the form
\begin{align}\label{mapprox}
M_d(x,q)=\frac{1}{dq}\sum_{n\leq x}\sum_{k^2+l^2=n}\omega(l)\varrho_l(d;q,a).
\end{align}
Here $\varrho_l(d;q,a)$ is the number of solutions to the system of congruences
\begin{align*}
\nu^2+l^2&\equiv 0 (d)\\
\nu^2+l^2&\equiv a (q).
\end{align*}
We define
\begin{align*}
\varrho_k(q,a)&=\#\{m \mod (q):m^2+k^2\equiv a (q) \}\\
\varrho_k(q)&=\varrho_k(q,0)\\
\end{align*}
and note that either $(d,q)=1$ or $M_d(q,a)$ and $A_d(q,a)$ both vanish. In the former case we have, just as in \cite[(3.19)]{foi},
\begin{align*}
\varrho_l(d;q,a)=\varrho_l(d)\varrho_l(q,a).
\end{align*}
The error of the approximation is defined as
\begin{align*}
R_d(x,q)=A_d(x,q)-M_d(x,q),
\end{align*}
which gives us the linear remainder
\begin{align*}
R(x,q;D)=\sum_{d\leq D}|R_d(x,q)|.
\end{align*}
We continue as in \cite[(7.13)]{foi} and get
\begin{align}\label{gfie2}
A(x,q;y,z)=M(x,q;y,z)+R(x,q;y,z),
\end{align}
where
\begin{align*}
&M(x,q;y,z)=\\
&\frac{1}{q}\sum_{n\leq x}\sum_{\substack{b\leq y\\ (b,q)=1}}\frac{\mu(b)}{b}\Bigl[(\log \frac{n}{b})\sum_{n=k^2+l^2}\omega(l)\varrho_l(b)\varrho_l(q,a)-\sum_{\substack{c\leq z\\(c,q)=1}}\frac{\Lambda(c)}{c}\sum_{n=k^2+l^2}\omega(l)\varrho_l(bc)\varrho_l(q,a)\Bigr],
\end{align*}
and $R(x,q;y,z)$ is some function that can be estimated with the help of the linear remainder function by
\begin{align*}
|R(x,q;y,z)|\leq R(x,q;yz)\log x+\int_1^x\frac{R(t,q;y)}{t}dt.
\end{align*}
The next step is the evaluation of the main term and here the effect of $q$ and $a$ becomes visible. We have 
\begin{align*}
M(x,q;y,z)=\frac{1}{q}\sum_{n\leq x}\sum_{k^2+l^2=n}\omega(l)\varrho_l(q,a)\sigma_l(n,q;y,z)
\end{align*}
with
\begin{align*}
\sigma_l(n,q;y,z)=\sum_{\substack{b\leq y\\ (b,q)=1}}\frac{\mu(b)}{b}\Bigl[\varrho_l(b)\log\frac{n}{b}-\sum_{\substack{c\leq z\\ (c,q)=1}}\frac{\Lambda(c)}{c}\varrho_l(bc) \Bigr].
\end{align*}
The completion of this to an infinite series can be done as in \cite{foi} and using their bound we get
\begin{align*}
\sigma_l(n,q;y,z)=\psi(l,q)+O_A(\tau(q)\tau(l)\log(2lnz)(\log y)^{-A}).
\end{align*}
Here $\psi(l,q)$ differs slightly from \cite[(7.19)]{foi}, we instead have (note that all appearing infinite sums converge)
\begin{align*}
\psi(l,q)&=-\sum_{\substack{b\\(b,q)=1}}\frac{\mu(b)}{b}\varrho_l(b) \log b\\
&=-\sum_{d|q}\mu(d) \sum_{\substack{b \\d|b}}\frac{\mu(b)}{b}\varrho_l(b) \log b \\
&=-\sum_{d|q}\frac{\mu^2(d)\varrho_l(d)}{d}\sum_{\substack{b\\(b,q)=1}}\frac{\mu(b)}{b}\varrho_l(b) \log b-\sum_{d|q}\frac{\mu^2(d)\varrho_l(d)\log d}{d}\sum_{\substack{b\\(b,q)=1}}\frac{\mu(b)}{b}\varrho_l(b) \\
&=\sum_{d|q}\frac{\mu^2(d)\varrho_l(d)}{d}\psi(l,d)+\sum_{d|q}\frac{\varrho_l(d)\log d}{d}\psi'(l,d),
\end{align*}
say. We have further
\begin{align*}
\psi'(l,d)&=\sum_{d'|d}\mu(d')\sum_{\substack{b \\d'|b}}\frac{\mu(b)}{b}\varrho_l(b)\\
&=\sum_{d'|d}\frac{\mu^2(d')\varrho_l(d')}{d}\psi'(l,d')
\end{align*}
and so
\begin{align*}
\psi'(l,d)&=\psi'(l,1)\prod_{p|d'}\Bigl(1-\frac{\varrho_l(p)}{p}\Bigr)^{-1}\\
&=0.
\end{align*}
Thus, 
\begin{align*}
\psi(l,q)=\sum_{d|q}\frac{\mu^2(d)\varrho_l(d)}{d}\psi(l,d)
\end{align*}
and similarly to the case of $\psi'$ we have
\begin{align*}
\psi(l,q)&=\psi(l,1)\prod_{p|q}\Bigl(1-\frac{\varrho_l(p)}{p}\Bigr)^{-1}\\
&=\psi(l,1)\prod_{p|q}\Bigl(1-\frac{1}{p}\Bigr)^{-1} \prod_{p|q}\Bigl(1-\frac{1}{p}\Bigr)\Bigl(1-\frac{\varrho_l(p)}{p}\Bigr)^{-1} \\
&=\frac{q}{\varphi(q)}\prod_{p\nmid l}\Bigl(1-\frac{\chi(p)}{p-1} \Bigr)\prod_{\substack{p|q\\ p\nmid l}}\Bigl(1-\frac{\chi(p)}{p-1} \Bigr)^{-1}\\
&=\frac{q}{\varphi(q)}\psi(ql).
\end{align*}
Here the one variable $\psi(q)=\psi(q,1)$ coincides with the one in \cite[(7.19)]{foi}. This enables to identify the proposed main term by 
\begin{align*}
M(x,q;y,z)=\frac{1}{\varphi(q)}\sum_{k^2+l^2\leq x}\omega(l)\varrho_l(q,a)\psi(ql)+O_A\bigl(x(\log x)^{-A+O_{c_1,c_2}(1)}\bigr),
\end{align*}
if $y>x^\epsilon$ for a fixed $\epsilon>0$. Here the implied constants are independent of $q$ and $a$. We note that $P(z)\leq x^{1-\epsilon}$ as long as $z\leq x^{1-2\epsilon}$. Considering \eqref{gfie1} and \eqref{gfie2}, to complete the proof we have to show that the two remainder terms $R(x,q;y,z)$ and $B(x,q;y,z)$ are small enough.

The bound for $R(x,q;y,z)$ follows essentially from Lemma 5' in \cite{foi}, but the implied constant there depends on $q$, which makes it not suitable for our application. This is a minor problem as Lemma 3' in \cite{foi} still has explicit dependence on $q$. We get for $(q,a)=1$ and $1\leq D\leq x$ the following variant of \cite[(3.16')]{foi}
\begin{align*}
\sum_{d\leq D}|R_d(f;q,a)|\ll_\epsilon q^3 ||\omega||\Delta D^{1/2}x^{5/4+\epsilon}.
\end{align*}
This leads to
\begin{align}\label{linrem}
R(x,q;D)\ll_\epsilon q^3||\omega|| D^{1/4}x^{1/2+\epsilon}.
\end{align}
We conclude that for $yz\leq x^{1-2\epsilon}$ we have
\begin{align*}
R(x,q;y,z)\ll_\epsilon x^{1-\epsilon/5},
\end{align*}
uniformly in $q\ll (\log x)^A$.

For estimating $B(x,q;y,z)$ we note that the additional congruence condition in the sequence $a_n$ and the fact that $\omega$ is not bounded are the only differences to the case considered in \ref{foi}. We open the congruence condition with Dirichlet characters and estimate estimate
\begin{align*}
B(x,q;y,z)\leq (\log x)\sup_{\chi(q)}\sum_{z<d<x/y}\Bigl|\sum_{y<b\leq x/d}\mu(b)\chi(b) \sum_{bd=k^2+l^2}\omega(l) \Bigr|.
\end{align*}
The steps in \cite{foi} section 8 and 9 can be followed almost verbatim, $\mu \chi$ taking the role of $\mu$ and, as there, $\lambda=\omega$. The increased range of $\omega$ loses us finitely many logarithms in \cite[Corollary 12]{foi} and so we get sufficient saving if $j$ is large and as long as
\begin{align*}
\alpha(z)=\mu(r|z|^2)\chi(r|z|^2)
\end{align*}
still fulfills Hypothesis \cite[(9.28)]{foi}. That is
\begin{align*}
D_d(\alpha)\ll ||\alpha||^2 (\log A')^{-j}
\end{align*}
for (see \cite[(9.23)]{foi})
\begin{align*}
D_d(\alpha)=2 \pi {A'}^{-2}\sum_{z_1\equiv z_2 (d)}\alpha(z_1)\overline{\alpha}(z_2)\text{exp}(-2\pi |z_1-z_2|)  {A'}^{-1}.
\end{align*}
As the final choice of $y$ means $y>x^\epsilon$ for some fixed $\epsilon>0$, we may assume that $q\leq (\log A')^j$. Consequently the additional character only requires a slight extension of the range of Grossencharacters considered and the proof completes as in section 9 \cite{foi}.
\end{proof}

The distribution in congruence classes of $\Lambda^\omega(n)$ to the modulus $q$ depends on the distribution of $\omega(l)$ to the same modulus. Theorem \ref{GFI} encodes this dependence in the main term. We intend to apply the result only for weights $\omega$ that are essentially supported and equidistributed on residue classes coprime to $q$. In that case Corollary \ref{cor} gives a more explicit statement. We now show how it follows from Theorem \ref{GFI}.
\begin{proof}[Proof of Corollary \ref{cor}]
The main term in Theorem \ref{GFI} is of the form
\begin{align*}
\sum_{m^2+l^2\leq x}\omega(l)\varrho_l(q,a)\psi(ql).
\end{align*}
To simplify notation we again write
\begin{align*}
\psi'(q)=\prod_{p|q}\Bigl(1-\frac{\chi(p)}{p-1} \Bigr)^{-1}.
\end{align*}
We recall the assumed asymptotics 
\begin{align*}
\sum_{\substack{l\leq T\\l\equiv b(q)}}\omega(l)\psi(ql)=\Theta(q,b)\psi'(q)\sum_{l\leq T}\omega(l)\psi(l)+O_A(\sqrt{x}(\log x)^{-A})
\end{align*}
with
\begin{align*}
\Theta(q,a)=\begin{cases}\frac{1}{\varphi(q)}, & \text{ if } (q,a)=1 \\
0, & \text{ else.} \end{cases}
\end{align*}
By sorting into residue classes and the definition of $\Xi$ in \eqref{xidef}, it follows that
\begin{align*}
\sum_{k^2+l^2\leq x}\omega(l)\varrho_l(q,a)\psi(ql)&=\frac{\psi'(q)}{\varphi(q)}\sum_{b(q)^*}\varrho_b(q,a)\sum_{k^2+l^2\leq x}\omega(l)\psi(l)+O_A(x(\log x)^{-A})\\
&=\Xi(q,a)\sum_{k^2+l^2\leq x}\omega(l)\psi(l)+O_A(x(\log x)^{-A}).
\end{align*}
The multiplicativity of $\Xi$ is clear. Let now $q=p^r$ with $p\neq 2$. By Hensel's Lemma we have
\begin{align*}
{\sum_{d(p^r)^*}}\varrho_d(p^r,a)&=\sum_{\substack{d(p^r)^*\\ }}\varrho_d(p,a).
\end{align*}
To prove \eqref{Xipowerblindeq}, we note that
\begin{align*}
\Xi(p^r,a)&=\Bigl(1-\frac{\chi(p)}{p-1}\Bigr)^{-1}\sum_{d(p^r)^*}\frac{\varrho_d(p^r,a)}{\varphi(p^r)}\\
&=\Bigl(1-\frac{\chi(p)}{p-1}\Bigr)^{-1}\sum_{d(p^r)^*}\frac{\varrho_d(p^r,a)}{p^{r-1}\varphi(p)}\\
&=\Bigl(1-\frac{\chi(p)}{p-1}\Bigr)^{-1}\sum_{d(p)^*}\frac{\varrho_d(p,a)}{\varphi(p)}\\
&=\Xi(p,a),
\end{align*}
as stated in the Corollary. For $q=2^r$ the proposed statements follow from first calculating $\Xi(2^r,b)$ for $r\leq 3$ by hand and then using Hensel's Lemma.
\end{proof}

We end this subsection by concluding that a suitable version of condition III. in Theorem \ref{MT} holds for $\alpha^-$ close to $1$.
\label{corosubs}
\begin{cor}\label{condc}
Let $x$, $N$, $W$, $b$, and $\eta$ as in Theorem \ref{MT}. Let further  $P\subset [N]$ be an arithmetic progression with $|P|\geq \eta N$. If $x$ is sufficiently large in terms of $\eta$ it holds that
\begin{align*}
\sum_{n\in P}\Lambda^{\Lambda}_{W,b}(n)\geq 0.999 |P|.
\end{align*}
\end{cor}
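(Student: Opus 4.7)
The plan is to reduce the statement to a direct application of Corollary \ref{cor}. Write $P = \{n_0 + kd : 0 \leq k < L\}$, where $L = |P| \geq \eta N$. Unfolding the definition \eqref{normal} of $\Lambda^{\Lambda}_{W,b}$, the left-hand side becomes a constant multiple of a sum of $\Lambda^{\Lambda}$ over the arithmetic progression with modulus $q := Wd$ and common residue $c := Wn_0 + b$, restricted to a range of length $\asymp Lq$ inside $[1, WN]$.

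I would first check that Corollary \ref{cor} is applicable with this modulus and residue. Since $|P| \geq \eta N$ forces $d \ll 1/\eta$, for $x$ sufficiently large we have $d \leq w = 0.1 \log \log x$, so every prime factor of $d$ already divides $W$. Combined with $(b, W) = 1$ and $c \equiv b \pmod{W}$, this gives $(c, q) = 1$; by the prime number theorem we also have $q = Wd \leq (\log x)^{0.2}$, well within the admissible range. For $\omega = \Lambda$, condition \eqref{1} is the Siegel--Walfisz theorem, after using $\psi(ql) = \psi'(q)\psi(l)$ whenever $(l,q) = 1$ (the contribution of $l$ sharing a prime with $q$ is $O(\log x)$). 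Applying Corollary \ref{cor} by subtraction at the endpoints, together with $\sum_{k^2 + l^2 \leq X} \Lambda(l) \psi(l) = HX + O_A(X(\log X)^{-A})$ (which follows from \eqref{foiasymp} and the trivial-modulus case of Corollary \ref{cor}), yields
\begin{align*}
\sum_{n \in P} \Lambda^{\Lambda}(Wn + b) = \frac{\Xi(q, c)}{\varphi(q)} \cdot H L q + O_A\!\bigl(x (\log x)^{-A}\bigr).
\end{align*}

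The remaining step is the arithmetic simplification $\Xi(Wd, c) = \Xi(W, b)$ and $\varphi(Wd) = d \varphi(W)$. The identity for $\varphi$ follows from $\varphi(p^{a+b}) = p^b \varphi(p^a)$ applied to each prime dividing $d$ (all of which also divide $W$). The identity for $\Xi$ uses multiplicativity, the power-blindness $\Xi(p^r, \cdot) = \Xi(p, \cdot)$ for odd $p$ and $\Xi(2^r, \cdot) = \Xi(4, \cdot)$ for $r \geq 2$ (using $4 \mid W$), together with $c \equiv b \pmod{W}$ and the admissibility $b \equiv 1 \pmod{4}$. Substituting these identities back and multiplying by the normalization factor in \eqref{normal}, the main term collapses to exactly $L$. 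Since $\Xi(W,b)^{-1} \ll \log \log W$ and $L \geq \eta N$, the normalized error is $O_A\!\bigl(\eta^{-1}(\log x)^{-A + o(1)}\bigr) \cdot L$, which is $o(L)$ for any sufficiently large fixed $A$ once $x$ is large enough. Thus the sum exceeds $0.999 L$. The only (purely bookkeeping) obstacle is tracking the $\Xi$ and $\varphi$ ratios at primes dividing both $W$ and $d$; once these are reduced, the rest is an immediate consequence of Corollary \ref{cor}.
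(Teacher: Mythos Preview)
Your proposal is correct and follows essentially the same route as the paper's proof: unfold the normalization, observe that $d\le \eta^{-1}$ forces every prime factor of $d$ to divide $W$ once $x$ is large, apply Corollary~\ref{cor} (with Siegel--Walfisz supplying \eqref{1}) to the progression modulo $Wd$, and then use the multiplicativity and power-blindness of $\Xi$ together with $\varphi(Wd)=d\,\varphi(W)$ to collapse the main term to $|P|$. Your handling of the error term and of the $\Xi$/$\varphi$ bookkeeping is slightly more explicit than the paper's, but the argument is the same.
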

\begin{proof}
Let $P=\{kd+a: k\in \{1,\ldots |P|\}\}$. Since $|P|\geq \eta N$, we have $d\leq \eta^{-1}$. Further, 
\begin{align*}
\sum_{n\in P}\Lambda^{\Lambda}_{W,b}(n)&=\frac{\varphi(W)}{\Xi(W,b)WH}\sum_{n\in P}\Lambda^\Lambda(Wn+b)\\
&=\frac{\varphi(W)}{\Xi(W,b)WH}\sum_{k\leq |P|}\Lambda^\Lambda(dWk+Wa+b).
\end{align*}
By the Siegel-Walfisz Theorem and since $x$ is chosen large in terms of $\eta$, we can apply Corollary \ref{cor} to get
\begin{align*}
&\frac{\varphi(W)}{\Xi(W,b)WH}\sum_{k\leq |P|}\Lambda^\Lambda(dWk+Wa+b)\\
=&\frac{\varphi(W)}{\Xi(W,b)W}\frac{\Xi(Wd,Wa+b)}{\varphi(Wd)}dW|P|+O_A\Bigl(\frac{x}{(\log x)^A}\Bigr)
\end{align*}
Again if $x$ is sufficiently large in terms of $\eta$, we have that every prime divisor of $d$ also divides $W$. Consequently $Wa+b$ is coprime to $Wd$, $Wa+b\equiv 1(4)$, and $\varphi(Wd)=\varphi(W)d$. This gives us
\begin{align*}
=&\frac{\varphi(W)}{\Xi(W,b)W}\frac{\Xi(Wd,Wa+b)}{\varphi(Wd)}dW|P|+O_A\Bigl(\frac{x}{(\log x)^A}\Bigr)\\
=&\frac{\varphi(W)}{\Xi(W,b)W}\frac{\Xi(W,b)}{\varphi(W)d}dW|P|+O_A\Bigl(\frac{x}{(\log x)^A}\Bigr)\\
=&|P|+O_A\Bigl(\frac{x}{(\log x)^A}\Bigr)\\
\geq& 0.999 |P|,
\end{align*}
as long as $x$ is large enough in terms of the implied constant.
\end{proof}

\subsection{Rough Numbers and a Sieve}\label{secbuch}
We define for real numbers $n$ and $z$
\begin{align}\label{rhodef}
\rho(n,z,z_0)=\begin{cases}1, &\text{ if } n\in \mathbbm{N}\text{ and } p|n \Rightarrow p>z \text{ or } p\leq z_0    \\
0, &\text{ else. }
\end{cases}
\end{align}
and write $\rho(n,z)=\rho(n,z,0)$. Our application of sieve switching is done with the help of Buchstab's identity. Let $z<w$, then for square-free $n$ we have
\begin{align}\label{buchidentity}
\rho(n,z)=\rho(n,w)+\sum_{z<p\leq w}\rho(n/p,p).
\end{align}
Summing over $\rho(n,z)$ is related to Buchstab's function that, to avoid confusion with the appearing weights, we call $\mathcal{B}(u)$. We define it by
\begin{align*}
\mathcal{B}(u)=\begin{cases}0 &\text{ if }u < 1 \\
u^{-1}, & \text{ if }1\leq u \leq 2,
\end{cases}
\end{align*}
and by induction for $k\leq u\leq k+1$ with $k\in \mathbbm{N}_{\geq 2}$
\begin{align*}
\mathcal{B}(u)=\frac{1}{u}\Bigl(k\mathcal{B}(k)+\int_k^u \mathcal{B}(v-1)dv\Bigr).
\end{align*}
This differs slightly from the usual definition in that we allow $u<1$. In this way we can state the following results in a more natural way. Note that for $u> 1$, $u\neq 2$ the function fulfills the delayed differential equation
\begin{align}\label{bdde}
(u\mathcal{B}(u))'=\mathcal{B}(u-1).
\end{align}
As a special case of Buchstab's work \cite{buc} we have the well known result
\begin{align*}
\sum_{n\leq T}\rho(n,T^{1/u})= \frac{u \mathcal{B}(u)T}{\log T}+O_u(T(\log T)^{-2}).
\end{align*}
To allow thick enough major arcs we require an asymptotic with better error. We set
\begin{align*}
B(t,z)=\frac{\mathcal{B}(\frac{\log t}{\log z})}{\log z}
\end{align*}
and have the following result that is related to Lemma 1.2 of \cite{kum}.
\begin{lemma}\label{buch}
Let $A>0$, $z>T^\delta$ for some $\delta>0$. Then it holds uniformly in $q\leq (\log x)^A$ and $(a,q)=1$ that
\begin{align*}
\sum_{\substack{n\leq T\\ n\equiv a(q)}}\rho(n,z)=\frac{1}{\varphi(q)}\int_0^T B(t,z)dt+O_{\delta,A}\bigl(T(\log T)^{-A}\bigr).
\end{align*}
Furthermore $B(t,z)$ is differentiable in $t$ outside the points $t=z^j$ for $j\in \{1,2\}$ and it holds that
\begin{align}\label{derivbound}
\bigl|\frac{\partial}{\partial t}B(t,z)\bigr|\leq \frac{1}{t (\log t)(\log z)}.
\end{align}
\end{lemma}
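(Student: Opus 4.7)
The plan is to prove the asymptotic by induction on the depth $u=\log T/\log z$, which is bounded by $1/\delta$, using the Buchstab identity \eqref{buchidentity} with $w=\sqrt T$ to peel off primes and invoking Siegel--Walfisz at the base case; the integral main term is matched using the delayed differential equation $(u\mathcal{B}(u))'=\mathcal{B}(u-1)$ coming from \eqref{buchint}. I write $\Phi(T,z;q,a)$ for the sum on the left-hand side of the lemma.

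For the base case $u\leq 2$ (i.e.\ $z\geq\sqrt T$) one has $\rho(n,z)=\mathbbm{1}_{\{n=1\}}+\mathbbm{1}_{\{n\ \text{prime},\ z<n\}}$, so $\Phi(T,z;q,a)=\pi(T;q,a)-\pi(z;q,a)+O(1)$; Siegel--Walfisz, uniform in $q\leq(\log x)^A$ because every intended application has $\log T\asymp\log x$, gives the main term $(\mathrm{Li}(T)-\mathrm{Li}(z))/\varphi(q)$ with the required error. On the integral side, $\mathcal{B}(v)=1/v$ on $[1,2]$ forces $B(t,z)=1/\log t$ on $[z,z^2]$, so $\int_0^T B(t,z)\,dt=\mathrm{Li}(T)-\mathrm{Li}(z)$, matching.

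For the inductive step $u>2$, apply \eqref{buchidentity} with $w=\sqrt T$, sum over $n\leq T$ with $n\equiv a(q)$, and note that every prime divisor of $q$ is $\leq q<z$, so only primes coprime to $q$ contribute:
\[
\Phi(T,z;q,a)=\Phi(T,\sqrt T;q,a)+\sum_{z<p\leq\sqrt T}\Phi(T/p,p;q,ap^{-1}).
\]
Each summand has strictly smaller depth (at most $u-1$) with improved parameter $\delta/(1-\delta)$, so the induction hypothesis applies, and the $\log\log T$ loss from $\sum_p 1/p$ is absorbed by taking the initial $A$ slightly larger. To match the main terms I substitute $v=\log t/\log z$ to rewrite $\int_0^T B(t,z)\,dt=\int_1^U\mathcal{B}(v)z^v\,dv$ with $U=\log T/\log z$, split off the $[1,2]$-part contributing $\mathrm{Li}(T)-\mathrm{Li}(z)$, use the integrated relation $v\mathcal{B}(v)=1+\int_1^{v-1}\mathcal{B}(\tau)\,d\tau$ to rewrite the remainder, and replace the discrete prime sum $\sum_{z<p\leq\sqrt T}\int_0^{T/p}B(t,p)\,dt$ by $\int_z^{\sqrt T}(\cdot)\,dp/\log p$ via PNT with power-saving error. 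This final integral-side bookkeeping is the main obstacle — purely analytic, with no arithmetic content, but the change of variable together with the cut at $v=2$ makes it the longest part of the argument.

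The derivative bound is immediate: $\partial_t B=\mathcal{B}'(v)/(t(\log z)^2)$ with $v=\log t/\log z$, and rearranging \eqref{bdde} yields $\mathcal{B}'(v)=(\mathcal{B}(v-1)-\mathcal{B}(v))/v$ off the kinks $v\in\{1,2\}$, i.e.\ off $t\in\{z,z^2\}$. Since $0\leq\mathcal{B}\leq 1$ by induction from \eqref{buchint}, we get $|\mathcal{B}'(v)|\leq 1/v$ and hence $|\partial_t B(t,z)|\leq 1/(tv(\log z)^2)=1/(t\log t\log z)$.
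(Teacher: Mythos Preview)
Your proposal is correct and follows essentially the same route as the paper: the paper's proof of the asymptotic is the single sentence ``deduced by applying the Siegel--Walfisz Theorem, repeated application of Buchstab's identity \eqref{buchidentity} and summation by parts'', and your induction on the depth $u=\log T/\log z$ via Buchstab's identity with Siegel--Walfisz at the base is exactly a fleshed-out version of that. Your treatment of the derivative bound is identical to the paper's, including the use of \eqref{bdde} to obtain $\mathcal{B}'(u)=(\mathcal{B}(u-1)-\mathcal{B}(u))/u$ and the pointwise bound $0\leq\mathcal{B}\leq 1$.
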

\begin{proof}
We start by considering the derivative
\begin{align*}
\frac{\partial}{\partial t}B(t,z)=\frac{\partial}{\partial t}\frac{\mathcal{B}(\frac{\log t}{\log z})}{\log z}.
\end{align*}
From the definition we see immediately that $\mathcal{B}(u)$ is smooth except if $u\in \mathbbm{N}$. Furthermore $\mathcal{B}(u)$ is differentiable at $u\in \mathbbm{N}_{\geq 3}$. We now prove the derivative bound \eqref{derivbound}. We note that \eqref{bdde} implies 
\begin{align*}
\mathcal{B}(u)'=\frac{\mathcal{B}(u-1)-\mathcal{B}(u)}{u}.
\end{align*}
So we get
\begin{align*}
\frac{\partial}{\partial t}B(t,z)&=\frac{\mathcal{B}'(\frac{\log t}{\log z})}{t (\log z)^2}\\
&=\frac{\mathcal{B}(\frac{\log t}{\log z}-1)-\mathcal{B}(\frac{\log t}{\log z})}{t (\log t) (\log z)}.
\end{align*}
The bound \eqref{derivbound} now follows from the fact that
\begin{align*}
0\leq \mathcal{B}(u)\leq 1
\end{align*}
for all $u$. 

The stated asymptotics follows from the Siegel-Walfisz Theorem together with finitely many applications of \eqref{buchidentity}, since not square-free $n$ are negligible. For a rigorous proof see the works of de Bruijn \cite[(1.13)]{deB} and Xuan \cite[Corollary 1]{xua}.
\end{proof}

A central role in the construction of our majorant for the Fouvry-Iwaniec primes is played by the so called beta-sieve. For a complete treatment of the sieves we refer to chapters 6.5, 6.8 and 11 of Friedlander and Iwaniec' book \cite{cri}. We call a function $\theta^+(n;D,P)$ an upper bound sieve of level $D$ and range $P$, if
\begin{align*}
\theta^+(n;D,P)=\sum_{d|n}\lambda_d
\end{align*}
for some coefficients $\lambda_d$ supported only on $d\leq D$, and
\begin{align}\label{ubsieve}
\theta^+(n;D,P)\geq 1_{(n,P)=1}(n).
\end{align}
Similarly $\theta^-(n;D,P)$ is a lower bound sieve if instead of \eqref{ubsieve} we have
\begin{align*}
\theta^-(n;D,P)\leq 1_{(n,P)=1}(n).
\end{align*}
When applying these sieves on a sequence $a_n$, the congruence sums and their approximations
\begin{align*}
\sum_{d|n}a_n=g(d)X+R_d
\end{align*}
appear naturally. We assume that $g(d)$ is multiplicative and that for some $\kappa\geq 0$ and $K\geq 1$ we have the bound
\begin{align}\label{sieveaxiom}
\prod_{w\leq p< z}(1-g(p))^{-1}\leq K\bigl(\frac{\log z}{\log w} \bigr)^\kappa
\end{align}
for any $2\leq w<z$. We call $\kappa$ the dimension of the sieve and are mainly concerned with the case $\kappa=1$.

We now construct upper and lower bound sieves that allow us to understand 
\begin{align*}
\sum_{n\equiv a(q)}a_n \theta^\pm(n),
\end{align*}
where $a_n$ is a sequence that fulfills the above sieve axiom with $\kappa=1$.  Without the congruence condition the optimal choice is the beta-sieve with $\beta=2$. To deal with the congruence condition we do an additional preliminary sieving. 

Recall that the beta-sieve for level $D$ and sifting range $P$ is defined as 
\begin{align*}
\sum_{\substack{d|n\\ \substack{d|P\\ d\in \mathcal{D}^\pm(D,\beta)}}}\mu(d)=\sum_{\substack{d|n\\ d|P}}\lambda_d^\pm(D,\beta),
\end{align*}
where 
\begin{align*}
\mathcal{D}^+(D,\beta)&=\{d=p_1\ldots p_n: p_1>\ldots>p_n, p_1\ldots p_mp_m^{\beta}<D \text{ for all } m \text{ odd} \}\\
\mathcal{D}^-(D,\beta)&=\{d=p_1\ldots p_n: p_1>\ldots>p_n, p_1\ldots p_mp_m^{\beta}<D \text{ for all } m \text{ even} \}.
\end{align*}
When applying the sieve, naturally 
\begin{align*}
V^\pm(D,P):=\sum_{d|P}\lambda^\pm_d(D,\beta) g(d)
\end{align*}
arises. We require two results as to how it is related to
\begin{align*}
V(P):=\prod_{p| P}\bigl(1-g(p)\bigr).
\end{align*}
The first is a fundamental lemma.
\begin{lemma}\label{fundlem}
Let $\lambda^\pm_d(D,\beta)$ be the $\beta$-sieve of level $D$ for $\beta=10$. Then for any multiplicative function $g(d)$ fulfilling \eqref{sieveaxiom} with $\kappa=1$ and for $z=\max\{p\in P\}$ we have
\begin{align*}
V^+(D,P)&=(1+E^+(s))V(P)\\
V^-(D,P)&=(1-E^-(s))V(P),
\end{align*}
where $s=\frac{\log D}{\log z}$and $E^\pm(s)$ are some functions depending on $g$, $D$, and $z$ with
\begin{align*}
0\leq E^\pm(s) \ll  e^{-s}.
\end{align*}
Here the implied constant depends only on $K$.
\end{lemma}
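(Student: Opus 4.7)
The plan is to deduce the lemma from the standard fundamental lemma of sieve theory in dimension $\kappa=1$, which is developed in detail in Chapters 6 and 11 of Friedlander--Iwaniec \cite{cri}. Since the exponent $\beta=10$ is comfortably above the critical value for dimension one, this is a soft application, and I would structure the argument in three steps.

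First, I would expand the truncated sieve sum
\begin{align*}
V^\pm(D,P)=\sum_{\substack{d\mid P\\ d\in \mathcal{D}^\pm(D,\beta)}}\mu(d)g(d)
\end{align*}
and compare it against the full Euler product $V(P)=\sum_{d\mid P}\mu(d)g(d)=\prod_{p\mid P}(1-g(p))$. The difference is indexed by divisor chains $d=p_1\cdots p_n$, $p_1>\cdots>p_n$, where the truncation rule defining $\mathcal{D}^\pm(D,\beta)$ is first violated. On each such chain the condition $p_1\cdots p_{m-1}p_m^{\beta+1}\geq D$ forces the smallest involved prime to be relatively large, namely $p_m\geq (D/p_1\cdots p_{m-1})^{1/(\beta+1)}$.

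Second, I would bound the resulting remainder sums by iterated Buchstab decomposition. Using the sieve axiom \eqref{sieveaxiom} with $\kappa=1$ to control products of $(1-g(p))^{-1}$ over prime ranges, the iterated sums are majorised by the standard sieve functions $F(s)$ and $f(s)$ of dimension one. Outside of $s=1,2$ these satisfy the delay-differential equations $(sF(s))'=F(s-1)$ and $(sf(s))'=f(s-1)$ with boundary data that are the hallmark of the linear sieve. The well-known asymptotic analysis of these equations (see \cite[Ch.~11]{cri}) gives $F(s),f(s)=1+O(e^{-s})$ uniformly, whence $E^\pm(s)\ll e^{-s}$.

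The point of possible difficulty, were one producing a fully self-contained argument rather than an importation, is the absolute nature of the implicit constant: the sieve axiom only asserts the inverse Euler product bound with an implied constant $K$, and one must verify that $K$ drops out of the final estimate once $\beta$ is taken large enough. This is precisely the aspect that distinguishes the \emph{fundamental lemma} regime from the boundary regime $\beta\to 1^+$, and since the paper takes $\beta=10$ the required uniformity is immediate from \cite[Ch.~6.5, 6.8 and 11]{cri}. For this reason I would simply cite the fundamental lemma from \cite{cri} after verifying that the hypotheses on $g$ (dimension one, multiplicativity, axiom \eqref{sieveaxiom}) match those required there.
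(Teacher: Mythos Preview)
Your proposal lands where the paper does: the paper's proof is simply a citation to Lemma~6.8 of \cite{cri} for the case $\kappa=1$, together with the one-line observation that $V^+(D,P)\geq V(P)\geq V^-(D,P)$ (from the upper/lower bound property of the sieve), which secures the sign condition $E^\pm(s)\geq 0$. You should add that observation, since your sketch addresses only the size $\ll e^{-s}$ and not the nonnegativity. Your intermediate outline also conflates two distinct regimes: the linear sieve functions $F,f$ and their delay-differential system belong to $\beta=2$ (Chapter~11 of \cite{cri}), whereas the fundamental lemma for $\beta=10$ is the simpler Chapter~6 analysis; moreover the coupled system is $(sF)'=f(\cdot-1)$, $(sf)'=F(\cdot-1)$, not the uncoupled equations you wrote. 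None of this affects correctness, since you ultimately defer to the cited result just as the paper does.
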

\begin{proof}
This is Lemma 6.8 of \cite{cri} in the case $\kappa=1$ together with the observation that
\begin{align*}
V^+(D,P(z))&\geq V(z)\\
V^-(D,P(z))&\leq V(z).
\end{align*}
\end{proof}
The second result we need is the main theorem of the beta-sieve.
\begin{lemma}\label{ls}
Let $g(d)$ fulfill \eqref{sieveaxiom} with $\kappa=1$. Let $\beta=2$. We have
\begin{align*}
V^+(D,z)&\leq \bigl(F(s)+O((\log D)^{-1/6}) \bigr)V(z) \text{ if } s\geq 1\\
V^-(D,z)&\geq \bigl(f(s)+O((\log D)^{-1/6}) \bigr)V(z) \text{ if } s\geq 2,
\end{align*}
where $s=\frac{\log D}{\log z}$, and $F(s)$, $f(s)$ are the functions of the linear sieve that are defined chapter 11.3 of \cite{cri}.
\end{lemma}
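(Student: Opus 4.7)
The plan is to appeal to the main theorem of the linear (Rosser--Iwaniec) sieve developed in Chapter 11 of \cite{cri}. With $\beta = 2$ and $\kappa = 1$, the beta-sieve coincides with the classical linear sieve, which is extremal in the one-dimensional setting; the functions $F(s)$ and $f(s)$ are precisely the asymptotic sifting densities for upper and lower bound sieves of level $D$ with sifting range up to $z = D^{1/s}$.

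First I would recall that $F$ and $f$ are characterized as the unique continuous solutions of the delay-differential system
\begin{align*}
(sF(s))' &= f(s-1), \qquad s > 1, \\
(sf(s))' &= F(s-1), \qquad s > 2,
\end{align*}
subject to the standard boundary conditions on the initial intervals ($sF(s)$ constant on $(0,3]$, $sf(s) = 0$ on $(0,2]$) that encode the extremal case. Next, I would derive recursive relations for the normalised ratios $V^\pm(D,z)/V(z)$ by applying Buchstab's identity \eqref{buchidentity} within the combinatorial truncation $\mathcal{D}^\pm(D,2)$; these recursions mirror the above delay-differential system in discrete form. Combining the sieve axiom \eqref{sieveaxiom} with Mertens-type estimates to convert sums over primes into integrals, I would identify the limiting main terms with $F(s)$ and $f(s)$ respectively.

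The main obstacle will be tracking the cumulative error across the iteration of Buchstab's identity through all levels of the truncation $\mathcal{D}^\pm(D,2)$. Two sources of loss have to be balanced: the approximation error at each prime-to-integral conversion (controlled by $O(1/\log z_j)$ at sifting level $z_j$) and the total number of iteration steps needed to reach the range where $F, f$ have converged close to their limiting value $1$. The exponent $1/6$ in the error $(\log D)^{-1/6}$ arises from the optimal choice of truncation depth inside this iteration. Since this delicate bookkeeping is precisely the content of Chapter 11 of \cite{cri}, the proof reduces to verifying that the sieve axiom \eqref{sieveaxiom} with $\kappa = 1$ holds and then quoting their main linear sieve theorem.
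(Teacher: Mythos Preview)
Your proposal is correct and takes essentially the same approach as the paper: after the expository sketch of the delay-differential system and Buchstab iteration, you conclude by citing the main linear sieve theorem from Chapter~11 of \cite{cri}, and the paper's proof is precisely the one-line citation ``This is Theorem~11.12 in \cite{cri}.'' The background you give is accurate but not needed for the proof itself.
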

\begin{proof}
This is Theorem 11.12 in \cite{cri}.
\end{proof}
We now define the sieve that we use. Assume we are given $z_0$, $z$, $D_0$ and $D$ with
\begin{align*}
z_0&<z,\\
z_0&<D_0,\\
z&<D.
\end{align*}
To define the sifting range we write
\begin{align*}
P(z,z_0)&=\prod_{z_0<p\leq z}p,\\
P(z_0)&=P(z_0,0).
\end{align*}
We deal with small primes by a preliminary sieving with range $P(z_0)$ and level $D_0$. Set
\begin{align*}
\theta_I^\pm(n;D_0,P(z_0))=\sum_{\substack{d|n\\ d|P(z_0)}}\lambda_{I}^\pm(d),
\end{align*}
where $\lambda_I^\pm(d)$ are the upper / lower bound beta-sieve weights with $\beta=10$ and level $D_0$. The main sieving is done by the linear sieve. We set
\begin{align*}
\theta_{II}^\pm(n;D,P(z,z_0))=\sum_{\substack{d|n\\ d|P(z,z_0)}}\lambda_{II}^\pm(d),
\end{align*}
where $\lambda_{II}^\pm(d)$ are the upper/lower bound beta-sieve with $\beta=2$ and level $D$. Finally, we define the composed sieve by
\begin{align}\label{mainsieve}
\theta^\pm(n)=\theta^\pm(n;D,D_0,z,z_0)=\theta_I^+(n;D_0,P(z_0))\theta_{II}^\pm(n;D,P(z,z_0)).
\end{align}
Note that the first component is always an upper bound sieve.

\section{The Majorant}\label{sec3}
In this section we consider our majorant of choice for the Fouvry-Iwaniec primes that we call $\Lambda^+$. The two goals we have are proving Proposition \ref{prmajo} and achieving sufficiently little overcounting \eqref{alpha+1}. After the construction we show two types of results for $\Lambda^+$. The first is a a precise statement about the asymptotic behavior of $\Lambda^+$ in arithmetic progressions that is later employed to deal with the case $\gamma\in \mathfrak{M}$ of Proposition \ref{prmajo}. The second is a combinatorical dissection of $\Lambda^+$ into Type I and II sums that is crucial for the complementary case $\gamma \in \mathfrak{m}$  of Proposition \ref{prmajo}.

\subsection{Construction}\label{subsecconst}
We start by recalling that 
\begin{align*}
\Lambda^\Lambda(n)=\Lambda(n)\sum_{n=k^2+l^2}\Lambda(l).
\end{align*}
Throughout this section we assume we are given a fixed value of $x$ and construct a majorant $\Lambda^+(n,x)$ that depends on $x$ and in the range $n\leq x$ fulfills
\begin{align*}
\Lambda^\Lambda(n)\leq \Lambda^+(n,x).
\end{align*}
 The problem of constructing a majorant in this case is related to finding good upper bounds for the number of twin primes. To use the idea of switching sieves we follow the considerations of Fouvry and Grupp in \cite[Section V.]{fgs}, which in turn are based on weights appearing in Pan's paper \cite{pan}. 

We consider the inner occurrence of $\Lambda$ first. We have $l\leq \sqrt{x}$, which motivates the following notation. Let 
\begin{align*}
z&=x^{\xi/2}\\
z_1&=x^{\xi_1/2}\\
z_0&=e^{(\log x)^{1/3}}
\end{align*} with 
\begin{align*}
\xi&=0.265\\
\xi_1&=0.183.
\end{align*}
Recall $\rho(l,z)=\rho(l,z,z_0)\rho(l,z_0)$, as defined in \eqref{rhodef}. We have
\begin{align}\label{majoineq1}
\Lambda(l)&\leq (\log \sqrt{x})\rho(l,z)+E_1(l)\\
&= (\log \sqrt{x}) \rho(l,z,z_0)\rho(l,z_0)+E_1(l),
\end{align}
where $E_1(l)$ accounts for the contribution of primes less than $z$ and their powers. Essential for our application of the switching principle is the following inequality that is based on \cite[Eq. (15)]{pan}. For square free $l\leq \sqrt{x}$ it holds that
\begin{align}\label{panineq}
 \rho(l,z,z_0)\leq \rho(l,z_1,z_0)-\frac{1}{2}\sum_{\substack{z_1< p \leq z\\l=pm}}\rho(m,z_1,z_0)+\frac{1}{2}\sum_{\substack{z_1< p_1<p_2<p_3\leq z\\l=p_1p_2p_3m}}\rho(m,p_1,z_0).
\end{align}
To see this, we first note that both sides of \eqref{panineq} are $0$, if $l$ has a prime divisor in $(z_0,z_1]$. Write now $l=p_1 \ldots p_r R$ with $p_1,\ldots, p_r \in (z_1,z]$ and $\rho(R,z,z_0)=1$. For $r=0$ both sides of \eqref{panineq} are $1$. If $r\geq 1$, the left hand side is $0$ and we have to show that the right hand side is nonnegative. The values of the right hand side are 
\begin{align*}
1-1/2&=1/2, &\text { if } r=1,\\
1-2\times 1/2&=0, &\text { if } r=2,\\
1-3\times 1/2+1/2&=0, &\text { if } r=3,\\
1-4\times 1/2+3\times 1/2&=1/2, &\text { if } r=4,\\
1-5\times 1/2+6\times 1/2&=3/2, &\text { if } r=5.
\end{align*}
By our choice of $\xi_1$ larger values of $r$ are not possible. 

In the next step we apply the sieves defined previously. We now fix the following values
\begin{align*}
D_0&=e^{(\log x)^{2/3}}\\
\delta_0&=10^{-7}\\
D_1&=x^{1/3-\delta_0}.
\end{align*}
We recall \eqref{majoineq1}, and apply an upper bound sieve on $\rho(l,z_0)$ getting
\begin{align*}
\Lambda(l)\leq \theta_I^+(l,D_0,P(z_0))\rho(l,z,z_0)\log \sqrt{x}+E_1(l).
\end{align*}
Next we apply upper and lower bound sieves together with \eqref{panineq} to obtain
\begin{align*}
 \rho(l,z,z_0)\leq \theta_{II}^+(l,D_1,P(z,z_0))-\frac{1}{2}\sum_{\substack{z_1\leq p < z\\l=pm}}\theta_{II}^-(m,D_1/p,P(z_1,z_0))+\frac{1}{2}\sum_{\substack{z_1\leq p_1<p_2<p_3< z\\l=p_1p_2p_3m}}\rho(m,p_1,z_0).
\end{align*}
Note that $\theta_I^+(l)\geq 0$ and that $\theta_{I}^+(l)=\theta_{I}^+(m)$ if $m=(l,P(z_0))$. Thus, combining these two inequalities with the notation in \eqref{mainsieve} gives us
\begin{align*}
\Lambda(l)\leq& \bigl(\theta^+(l;D_1,D_0,z_1,z_0)-\frac{1}{2}\sum_{\substack{z_1\leq p < z\\l=pm}}\theta^-(m;D_1/p,D_0,z_1,z_0)\\
&+\frac{1}{2}\sum_{\substack{z_1\leq p_1<p_2<p_3< z\\l=p_1p_2p_3m}}\rho(m,p_1,z_0)\theta_I^+(m,D_0,P(z_0))\log \sqrt{x}+E_2(l) \\
=&\omega_1(l)+\omega_2(l)+\omega_3(l)+ E_2(l),
\end{align*}
say. Here $E_2(l)$ accounts for the error $E_1(l)$ and $l$ that have a square prime divisor larger than $z_1$. Furthermore we should keep in mind that by the choice of parameters, the weights $\omega_i(l)$ depend on $x$. Note that there is no main sieve component in $\omega_3$. Plugging in this upper bound we get
\begin{align*}
\Lambda^\Lambda(n)\leq \Lambda(n)\sum_{n=k^2+l^2}\bigl(\omega_1(l)+\omega_2(l)+\omega_3(l)+E_2(l)\bigr).
\end{align*}
To prove minor arc bounds for the part that is related to $\omega_3$ we apply a sieve on the outer von Mangoldt function. We have
\begin{align*}
\Lambda(n)&\leq \theta^+(n;x^{1/2-\delta_0},D_0,x^{1/2-2\delta_0},z_0) \log x +E_3(n)\\
&=\Omega(n,x)+E_3(n),
\end{align*}
say. Here $E_3(n)$ accounts for primes less than $x^{1/2-2\delta_0}$ and their powers.
We set 
\begin{align*}
E(n)=\Lambda(n)\sum_{n=k^2+l^2}E_2(l)+E_3(n)\sum_{n=k^2+l^2}\omega_3(l).
\end{align*}
The majorant is defined by
\begin{align}\label{L+def}
&\Lambda^+(n,x) \nonumber\\
=&\Lambda(n)\sum_{n=k^2+l^2}\omega_1(l)+\Lambda(n)\sum_{n=k^2+l^2}\omega_2(l)+\Omega(n,x)\sum_{n=k^2+l^2}\omega_3(l)+E(n)\\
=&\Lambda_1(n,x)+\Lambda_2(n,x)+\Lambda_3(n,x)+E(n),\nonumber
\end{align}
say. We have the bound
\begin{align*} 
\sum_{n\leq x}|E(n)|\ll x^{9/10},
\end{align*}
so that we can ignore this term for all our applications.

Since a natural number $l$ has at most $\log l$ prime divisors and the combinatorical sieve weights are bounded by the divisor function, we note the following bounds for the appearing weights
\begin{align}
|\omega_1(l)|&\ll \tau(l)\log x ,\label{o1bound} \\ 
|\omega_2(l)|&\ll \tau(l)\log l\log x, \label{o2bound} \\
|\omega_3(l)|&\ll (\log l)^3\tau(l)\log x.\label{o3bound}
\end{align}
In contrast to the work on twin primes, we do not require knowledge about the level of linear distribution of the weight $\omega_3$. This gives us the possibility to construct a tighter majorant by using a sharper inequality than \eqref{panineq}. However, this does not lead to new applications for us and so we opt to follow the approach for twin primes.

\subsection{The Majorant in arithmetic progressions}\label{L3subse}
We now take a look at the behavior of $\Lambda^+(n,x)$ in arithmetic progressions. This is used later to understand major arc exponential sums. We have two goals. On the one hand we show strong asymptotics that allow us to make the major arcs thick enough. On the other hand we define the function $\alpha^+(x)$ that is needed in Proposition \ref{prmajo} and prove a weak asymptotics that is sufficient for pseudorandomness in the case $\gamma=0$. 
\begin{lemma}\label{majolem}
For $i\in \{1,2,3\}$ there exist functions $C_i(x)$, $C_i'(x)$ and $\mathcal{F}_i(t,x)$ that are given precisely during the proof, such that the following holds. Let $A>0$. We have uniformly in $q\leq (\log x)^A$ and $T\leq x$ that
\begin{align}\label{majolem1}
\sum_{\substack{n\leq T\\n\equiv a(q)}}\Lambda_i(n,x)=\frac{\Xi(q,a)}{\varphi(q)}C_i(x)\prod_{p}\bigl(1-\frac{\chi(p)}{p-1}\bigr)\int_0^{\sqrt{T}}\mathcal{F}_i(t,x)\sqrt{T-t^2}dt+O_A(x(\log x)^{-A})
\end{align}
Furthermore it holds that
\begin{align}\label{majolem2}
\sum_{\substack{n\leq x\\n\equiv a(q)}}\Lambda_i(n,x)=\frac{H\Xi(q,a)}{\varphi(q)}xC_i(x)C_i'(x)\bigr(1+o(1)\bigr)
\end{align}
with $H$ as in \eqref{Hdef},
\begin{align}\label{majolem3}
\alpha^+(x):=\sum_{i=1}^3 C_i(x)C_i'(x)<2.9739+o(1).
\end{align}
\end{lemma}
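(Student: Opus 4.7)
The plan is to handle $\Lambda_1^+$, $\Lambda_2^+$, $\Lambda_3^+$ separately. For each I would localise the inner variable $l$, count $k$ with $k^2+l^2\leq T$ in the desired residue class modulo $q$ (this produces the factor $\sqrt{T-l^2}$), and then smooth the $l$-sum by partial summation to arrive at the integral representation \eqref{majolem1}.

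For $\Lambda_1^+$ and $\Lambda_2^+$ the outer function is $\Lambda$, so Corollary \ref{cor} applies once the equidistribution hypothesis \eqref{1} is verified for $\omega_1(l,x)$ and $\omega_2(l,x)$. Both weights are, up to a factor $\log\sqrt{x}$ and a summation over one prime in $[z_1,z)$ in the case of $\omega_2$, beta-sieve functions of level at most $D_0D_1\leq x^{1/3}$. Expanding $\omega_i(l,x)=\log\sqrt{x}\cdot\sum_{d\mid l}\lambda_d^{\pm}$, the left-hand side of \eqref{1} becomes a sum over $d$ of congruence sums for $\psi(ql)$ to modulus $qd$; Siegel-Walfisz for the smooth function $\psi$ in conjunction with the fundamental lemma (Lemma \ref{fundlem}) to dispose of the $d$'s near the sieve level yields \eqref{1}. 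Corollary \ref{cor} then gives
\begin{equation*}
\sum_{\substack{n\leq T\\ n\equiv a(q)}}\Lambda_i^+(n,x)=\frac{\Xi(q,a)}{\varphi(q)}\sum_{k^2+l^2\leq T}\omega_i(l,x)\psi(l)+O_A\bigl(x(\log x)^{-A}\bigr).
\end{equation*}
Writing the inner sum as $\sum_{l\leq\sqrt{T}}\omega_i(l,x)\psi(l)(2\sqrt{T-l^2}+O(1))$ and applying partial summation against the smooth sieve asymptotic provided by Lemma \ref{ls} converts this into $C_i(x)\int_0^{\sqrt{T}}\mathcal{F}_i(t,x)\sqrt{T-t^2}\,dt$, where $\mathcal{F}_i(t,x)$ encodes the sieve-weighted density of $\omega_i(\cdot,x)\psi(\cdot)$ near $l=t$.

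For $\Lambda_3^+$ the outer function is the upper bound sieve $\Omega(n,x)$, so Corollary \ref{cor} is not directly available. Instead I would swap the order of summation: fix a triple $z_1\leq p_1<p_2<p_3<z$ and $m$ with $\rho(m,p_1)=1$, put $l=p_1p_2p_3 m$, and evaluate
\begin{equation*}
\sum_{\substack{k\leq\sqrt{T-l^2}\\ k^2+l^2\equiv a(q)}}\Omega(k^2+l^2,x).
\end{equation*}
Expanding $\Omega(n,x)$ as $\log x\sum_d\lambda_d^+\mathbbm{1}_{d\mid n}$, the Chinese remainder theorem gives, for $(d,q)=1$, a main count of $\frac{2\sqrt{T-l^2}}{dq}\varrho_l(d)\varrho_l(q,a)$ per $d$, with a combined error of size $x^{1/2-\delta_0/2+o(1)}$ per $l$ that is negligible after summation. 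Evaluating $\sum_d\lambda_d^+\varrho_l(d)/d$ by Lemmas \ref{fundlem} and \ref{ls} produces $\psi(l)$ times an $l$-independent sieve factor; summing the resulting weight $\omega_3(l,x)\psi(l)\varrho_l(q,a)/q$ against $l=p_1p_2p_3m$ and invoking Lemma \ref{buch} for the rough number $m$ together with partial summation over the prime variables yields \eqref{majolem1} for $i=3$.

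The asymptotic \eqref{majolem2} follows upon setting $T=x$ and substituting $t=\sqrt{x}\sin\theta$, which reduces the integral to $x\int_0^{\pi/2}\mathcal{F}_i(\sqrt{x}\sin\theta,x)\cos^2\theta\,d\theta$; the Euler product $H$ from \eqref{Hdef} emerges exactly as in \eqref{foiasymp} when the averaged $\psi(l)$ is combined with the sieve mass, and the remaining numerical mass defines $C_i'(x)$. The main obstacle is the numerical bound \eqref{majolem3}: $C_1 C_1'$ reduces to the linear sieve factor $F(s_1)$ with $s_1=(1/3-\delta_0)/(\xi_1/2)$, $C_2 C_2'$ to a single integral of $f(s)$ against $dp/(p\log p)$ over $p\in[z_1,z)$, and $C_3 C_3'$ to a triple integral involving $\mathcal{B}$. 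A straightforward upper-bound linear sieve applied to $\Lambda(l)$ alone would yield a constant slightly above $3$; it is only the combination of the negative $\omega_2$-contribution inherited from Pan's inequality \eqref{panineq} with the switching performed on the $\omega_3$-term, together with the carefully balanced parameters $\xi=0.265$ and $\xi_1=0.183$, that pushes $\alpha^+(x)$ below $3$ to the stated value $2.9739+o(1)$.
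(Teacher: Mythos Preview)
Your outline is essentially the paper's own argument: Corollary~\ref{cor} for $i\in\{1,2\}$ after checking~\eqref{1} via the sieve expansion, direct opening of the outer sieve $\Omega$ for $i=3$ together with Lemma~\ref{buch} for the inner rough-number count, and finally the linear-sieve functions $F$, $f$ plus the Buchstab triple integral for the numerical bound~\eqref{majolem3}.

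One point in the $i=3$ sketch needs correcting. The evaluation of $\sum_d \lambda_d^+\varrho_l(d)/d$ does \emph{not} produce a factor $\psi(l)$. For squarefree $d$ coprime to $l$ one has $\varrho_l(d)=\prod_{p\mid d}(1+\chi(p))$, which is independent of $l$; the paper observes that terms with $(l,d)>1$ contribute $O(T^{1+\epsilon}/z_1)$ and are negligible, so the density function becomes $g_3(p)=(1+\chi(p))/p$ with no $l$-dependence at all. The sieve then yields the purely $l$-independent factor $V_3^+(x,q,a)$, and the factor $\Xi(q,a)/\varphi(q)$ arises from combining the preliminary sieve over $p\leq z_0$ with the equidistribution of $\omega_3(l)\varrho_l(q,a)$ modulo $q$ (Siegel--Walfisz plus Lemma~\ref{buch}), not from a $\psi(l)$ weight in the $l$-sum. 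With this adjustment your $i=3$ computation goes through exactly as in the paper; your crude lattice-point error $x^{1/2-\delta_0+o(1)}$ per $l$ is already sufficient here, though the paper invokes the sharper aggregate bound~\eqref{linrem}.
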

\begin{proof}
The proof is mostly standard sieve technique in conjunction with either Corollary \ref{cor} or the Type I sum analysis of Fouvry and Iwaniec that also appears in the proof of Theorem \ref{GFI}.
\subsubsection*{The Case $i=1$}
We now prove \eqref{majolem1} and \eqref{majolem2} for $i=1$. We recall 
\begin{align*}
\Lambda_1(n,x)=\Lambda(n)\sum_{n=k^2+l^2}\omega_1(l),
\end{align*}
where
\begin{align*}
\omega_1(l)=\theta^+(l;D_1,D_0,z_1,z_0)\log\sqrt{x}
\end{align*}
with $\theta^+$ as given by \eqref{mainsieve} and
\begin{align*}
\xi_1&=0.183 
&z_1=x^{\xi_1/2}\\
z_0&=e^{(\log x)^{1/3}}
&D_0=e^{(\log x)^{2/3}}\\
\delta_0&=10^{-7}
&D_1=x^{1/3-\delta_0}.
\end{align*}
We want to apply Corollary \ref{cor}. This requires to understand
\begin{align}\label{32e1}
\sum_{\substack{l\leq T'\\l\equiv a(q)}}\omega_1(l)\psi(lq)=\prod_{p}\bigl(1-\frac{\chi(p)}{p-1}\bigr)\sum_{\substack{l\leq T'\\l\equiv a(q)}}\omega_1(l)\psi'(lq),
\end{align}
where $T'\leq \sqrt{T}$ and, as before, 
\begin{align*}
\psi'(lq)=\prod_{p|lq}\bigl(1-\frac{\chi(p)}{p-1}\bigr)^{-1}.
\end{align*}
We observe that
\begin{align}\label{psiconvo}
\psi'(l)=\sum_{r|l}\psi_0(r),
\end{align}
where $\psi_0(r)$ is the multiplicative function supported on square free $r$ only and given on primes by
\begin{align*}
\psi_0(p)=\frac{\chi(p)}{p-1-\chi(p)}.
\end{align*}

We now introduce the local density function $g(d,q,a)$. It is multiplicative in $d$ and on primes given by
\begin{align*}
g(p,q,a)=\begin{cases}\frac{1+\psi_0(p)}{p+\psi_0(p)} & \text{ if } p\nmid q\\
1 & \text{ if } p|(q,a)\\
0 & \text{ else. }
\end{cases}
\end{align*}
To evaluate \eqref{32e1}, we open the sieve in $w_1(l)$ and apply a routine calculation using \eqref{psiconvo}. This gives us
\begin{align}
\sum_{\substack{l\leq T'\\l\equiv a(q)}}w_1(l)\psi'(lq)=&(\log\sqrt{x}) V^+_1(x,q,a)\frac{T'}{q}\prod_{p|q}\Bigl(\frac{1+\psi_0(p)}{1+\psi_0(p)/p}\Bigr)\prod_{p}\Bigl(1+\frac{\psi_0(p)}{p}\Bigr) \nonumber \\
&+O( D_1D_0\log q\log T'),\label{noideaforname0}
\end{align}
where
\begin{align}
V^+_1(x,q,a)&=\sum_{d|P(z_0)}\lambda_I^+(d)g(d,q,a)\sum_{d|P(z_1,z_0)}\lambda_{II}^+(d)g(d,1,0)\nonumber\\
&=V^+_{1,I}(x,q,a)V^+_{1,II}(x)\label{noideaforname00},
\end{align}
say. Here we used that $z_0>q$ in our range, and so 
\begin{align*}
g(d,q,a)=g(d,1,0)
\end{align*} 
in the range of $V_{1,II}^+$.

We now show that $V_{1,I}^+$ fulfills
\begin{align}\label{V1I}
V_{1,I}^+(x,q,a)=\prod_{p|q}\Bigl(\frac{1+\psi_0(p)}{1+\psi_0(p)/p}\Bigr)^{-1}q\psi'(q)\Bigl(\Theta(q,a)+O(e^{-s_0}) \Bigr)\prod_{\substack{p\leq z_0\\ }}\Bigl(1-g(p,1,0) \Bigr),
\end{align}
where $\Theta(q,a)$ is given by \eqref{2} and 
\begin{align*}
s_0=\frac{\log D_0}{\log z_0}.
\end{align*} If $(a,q)=1$ the function $g(d,q,a)$ fulfills the sieve axioms \eqref{sieveaxiom} with $\kappa=1$, since $\psi_0(p)\ll 1/p$. In this case \eqref{V1I} follows from a straightforward application of the fundamental Lemma \ref{fundlem}. Next we bound the contribution of $a$ with $(a,q)\neq 1$ as follows. We write $g'(d,q)$ for the multplicative function defined by 
\begin{align*}
g'(p,q)=\begin{cases}\frac{1+\psi_0(p)}{p+\psi_0(p)} & \text{ if } p\nmid q \\
\frac{1}{p} &\text{ if } p|q.
\end{cases}
\end{align*}
We have
\begin{align*}
\sum_{a(q)}g(d,q,a)=q g'(d,q)
\end{align*}
and so
\begin{align*}
\frac{1}{q}\sum_{a(q)}\sum_{d|P(z_0)}\lambda_I^+(d)g(d,q,a)=\sum_{d|P(z_0)}\lambda_I^+(d)g'(d,q).
\end{align*}
Note that since we are considering an upper bound sieve, the sums over $d$ on both sides of this equality are always non negative. By the definition of $g'(p,q)$ we can apply fundamental Lemma \ref{fundlem} to the right hand side of this equality. The resulting main term is the same as when first applying fundamental Lemma \ref{fundlem} and then summing only over $a(q)$ with $(a,q)=1$. Hence we have
\begin{align*}
\frac{1}{q}\sum_{\substack{a(q)\\(a,q)\neq 1}}V_{1,I}^+(x,q,a)=O( e^{-s_0})
\end{align*}
and \eqref{V1I} follows in the remaining case $(a,q)\neq 1$. 

We now define \begin{align*}
C_1(x)=\log \sqrt{x}\prod_{p\leq z_0}\Bigl(1-g(p,1,0)\Bigl)V_{1,II}^+(x)\prod_{p}\bigl(1+\frac{\psi_0(p)}{p}\bigr).
\end{align*}
By the size of $s_0$, $D$, and $D_0$ we can bound the error terms sufficiently. By \eqref{noideaforname0}, \eqref{noideaforname00}, and \eqref{V1I} we then have
\begin{align*}
\sum_{\substack{l\leq T'\\l\equiv a(q)}}w_1(l)\psi'(lq)&=\Theta(q,a)\psi'(q)\sum_{\substack{l\leq T'\\}}w_1(l)\psi'(l)+O_A(\sqrt{x}(\log x)^{-A})\\
&=\Theta(q,a)\psi'(q)C_1(x)T'+O_A(\sqrt{x}(\log x)^{-A}).
\end{align*}
We recall that $\psi(l)=\prod_{p}\bigl(1-\frac{\chi(p)}{p-1}\bigr)\psi'(l)$ and are ready to apply Corollary \ref{cor}, giving us
\begin{align}\label{integrallambda1}
\sum_{\substack{n\leq T\\n\equiv a(q)}}\Lambda_1(n,x)=\frac{\Xi(q,a)}{\varphi(q)}C_1(x)\prod_{p}\bigl(1-\frac{\chi(p)}{p-1}\bigr)\int_0^{\sqrt{T}} \sqrt{T-t^2}dt+O_A(x(\log x)^{-A}),
\end{align}
which shows \eqref{majolem1} with $C_1(x)$ as defined and $\mathcal{F}_1(t,x)=1$.

The integral in \eqref{integrallambda1} is exactly $T\pi/4$ and since 
\begin{align*}
H=\frac{\pi}{4}\prod_p\Bigr(1-\frac{\chi(p)}{p-1}\Bigr)
\end{align*}
we can set
\begin{align*}
C'_1(x)=1
\end{align*}
to get a stronger version of
\begin{align*}
\sum_{\substack{n\leq x\\n\equiv a(q)}}\Lambda_1(n,x)=\frac{H\Xi(q,a)}{\varphi(q)}xC_1(x)C_1'(x)\bigr(1+o(1)\bigr),
\end{align*}
which is \eqref{majolem2} for $i=1$.

We conclude the case $i=1$ by bounding $C_1(x)$ with the help of the sieve results. We write
\begin{align*}
s=\frac{\log D_1}{\log z_1}
\end{align*}
and by Lemma \ref{ls} estimate with the use of the upper bound function of the linear sieve
\begin{align*}
V_{1,II}^+(x)\leq \Bigl(F(s)+o(1)\Bigr)\prod_{z_0<p\leq z_1}\Bigl(1-g(p,1,0)\Bigr).
\end{align*}
So we get the bound
\begin{align*}
C_1(x)\leq (\log \sqrt{x})\Bigl(F(s)+o(1)\Bigr)\prod_{p\leq z_1}\Bigl(1-g(p,1,0)\Bigr)\prod_{p}\bigl(1+\frac{\psi_0(p)}{p}\bigr).
\end{align*}
Since 
\begin{align*}
\bigl(1-g(p,1,0)\bigr)\bigl( 1+\frac{\psi_0(p)}{p}\bigr)=1-\frac{1}{p}
\end{align*}
we can apply a Mertens Formula and complete the Euler product to get
\begin{align*}
C_1(x)\leq \frac{1}{\xi_1 e^\gamma}\bigl(F(s)+o(1)\bigr).
\end{align*}
We have \begin{align*}
s=\frac{2/3-2\delta_0}{\xi_1}\in [3,5].
\end{align*}
So by the definition of $F(s)$ in that range (see for example \cite{fgs}) we arrive at the estimate
\begin{align}\label{C_1bound}
C_1(x)C'_1(x)\leq \frac{2}{2/3-2\delta_0}\Bigl(1+\int_2^{\frac{2/3-2\delta_0}{\xi_1}-1}\frac{\log (t-1)}{t}dt\Bigr)+o(1).
\end{align}
\subsubsection*{The Case $i=2$}
For the case $i=2$ we recall
\begin{align*}
\Lambda_2(n,x)=\Lambda(n)\sum_{n=k^2+l^2}\omega_2(l),
\end{align*}
where
\begin{align*}
\omega_2(l)=\frac{-\log \sqrt{x}}{2}\sum_{\substack{z_1\leq p <z\\ l=pm}}\theta^-(m;D_1/p,D_0,z_1,z_0)
\end{align*}
with the same parameters as in the last subsection and
\begin{align*}
z&=x^{\xi/2}, \\ \xi&=0.265.
\end{align*}
We set
\begin{align*}
V_{2,II}^-(x,p)=\sum_{\substack{d|P(z_1,z_0)\\ d\leq D_1/p}}\lambda_{II}^-(d,p)g(d,1,0),
\end{align*}
where $\lambda_{II}^-(d,p)	$ are the sieve weights of the main sieve of $\theta^-(m;D_1/p,D_0,z_1,z_0)$. Furthermore write
\begin{align*}
C_2(x)=\frac{-\log \sqrt {x}}{2}\prod_{p\leq z_0}\bigl(1-g(p,1,0)\bigr)\sum_{z_1\leq p<z}\frac{V^-_{2,II}(x,p)}{p}\prod_{p}\bigl(1+\frac{\psi_0(p)}{p}\bigr).
\end{align*}
Recall that the fundamental lemma range sieve is also an upper bound sieve for $i=2$. With similar steps as for $i=1$ we thus get
\begin{align*}
\sum_{\substack{n\leq T\\n\equiv a(q)}}\Lambda_2(n,x)=\frac{\Xi(q,a)}{\varphi(q)}C_2(x)\prod_p \bigl(1-\frac{\chi(p)}{p-1}\bigr)\int_0^{\sqrt{T}} \sqrt{T-t^2}dt+O_A(x(\log x)^{-A}),
\end{align*}
which is \eqref{majolem1} for $i=2$.

We set $C_2'(x)=1$ and just as before have
\begin{align*}
\sum_{\substack{n\leq x\\n\equiv a(q)}}\Lambda_2(n,x)=\frac{H\Xi(q,a)}{\varphi(q)}xC_2(x)C_2'(x)\bigl(1+o(1)\bigr).
\end{align*}
An application of Mertens Theorem and Lemma \ref{ls} in the lower bounds case now gives us
\begin{align*}
C_2(x)C'_2(x)\leq \frac{-1}{2 \xi_1 e^{\gamma}}\sum_{z_1\leq p<z}\frac{f(s_p)+O((\log x)^{-1/6})}{p},
\end{align*}
where
\begin{align*}
s_p=\frac{\log D_1/p}{\log z_1}=\frac{2/3-2\delta_0-2\log p}{\xi_1}.
\end{align*}
As $\xi_1 \leq 2\log p< \xi$, we have $s_p\in [2,4]$. We apply the Prime Number Theorem and the definition of $f(s_p)$ in the required range to get
\begin{align}
C_2(x)C_2'(x)&\leq -\int_{\xi_1/2}^{\xi/2}\frac{\log\bigl(\frac{2/3-2\delta_0-2t}{\xi_1}-1 \bigr)}{t(2/3-2\delta_0-2t)}dt+o(1)\nonumber \\
&=-\int_{\xi_1}^\xi\frac{\log\bigl(\frac{2/3-2\delta_0-t}{\xi_1}-1 \bigr)}{t(2/3-2\delta_0-t)}dt+o(1). \label{C_2bound}
\end{align}
\subsubsection*{The Case $i=3$}
To show the stated results for $\Lambda_3(n,x)$ we employ a similar approach as in the main term of Theorem \ref{GFI}. We recall
\begin{align*}
\Lambda_3(n,x)=(\log x)\theta^+(n;x^{1/2-\delta_0},D_0,x^{1/2-2\delta_0},z_0)\sum_{n=k^2+l^2}\omega_3(l)
\end{align*}
with
\begin{align*}
\omega_3(l)=\frac{\log \sqrt{x}}{2}\sum_{\substack{z_1\leq p_1<p_2<p_3< z\\l=p_1p_2p_3m}}\rho(m,p_1,z_0)\theta_I^+(m,D_0,P(z_0)).
\end{align*}
As a preparation we first show that instead of $\Lambda_3$ we can consider $\Lambda_3'$ given by
\begin{align*}
\Lambda_3'(n,x)=(\log x)\theta^+(n;x^{1/2-\delta_0},D_0,x^{1/2-2\delta_0},z_0)\sum_{n=k^2+l^2}\omega_3'(l),
\end{align*}
where
\begin{align*}
\omega_3'(l)=\frac{\log \sqrt{x}}{2}\sum_{\substack{z_1\leq p_1<p_2<p_3< z\\l=p_1p_2p_3m}}\rho(m,p_1).
\end{align*}
In other words, the sieve is replaced with a rough number indicator. We have $\omega_3'(l)\leq \omega_3(l)$ and estimate crudely
\begin{align*}
\sum_{\substack{n\leq T \\ n\equiv a(q)}}(\Lambda_3(n,x)-\Lambda_3'(n,x))&\leq (\log x) \sqrt{T}\sum_{l\leq \sqrt{T}}(\omega_3(l)-\omega_3'(l)).
\end{align*}
Furthermore, with the help of an upper bound sieve we have
\begin{align*}
\sum_{l\leq \sqrt{T}}(\omega_3(l)-\omega_3'(l))&=\frac{\log \sqrt{x}}{2}\sum_{l\leq \sqrt{T}}\sum_{\substack{z_1\leq p_1<p_2<p_3< z\\l=p_1p_2p_3m}}\rho(m,p_1,z_0)\bigl\{\theta_I^+(m,D_0,P(z_0))-\rho(m,z_0) \bigr\}\\
&\ll (\log x)\sum_{\substack{l\leq \sqrt{T}\\ }}\rho(l,z_1,z_0)(\theta_I^+(l,D_0,P(z_0))-\rho(l,z_0))\\
&\leq (\log x) \sum_{n\leq \sqrt{T}}\theta_{II}^+(n,T^{1/3},P(z_1,z_0))\bigl\{\theta_I^+(n,D_0,P(z_0))-\theta_I^-(n,D_0,P(z_0))\bigr\}\\
&=(\log x)\sqrt{T}V'^{+}_I(T^{1/3},P(z_1,z_0))\bigl\{V'^{+}_{II}(D_0,P(z_0))-V'^{-}_{II}(D_0,P(z_0))\bigr\}+O(T^{1/3+\epsilon}),
\end{align*}
where $V'^{+}_I$ and $V'^\pm_{II}$ are related to the sieves and the local density function is $g(d)=d^{-1}$. By Lemma \ref{ls} in particular $V'^+_I\ll 1$ and so by the fundamental Lemma \ref{fundlem} we conclude
\begin{align*}
\sum_{\substack{n\leq T \\ n\equiv a(q)}}(\Lambda_3(n,x)-\Lambda_3'(n,x))\ll x(\log x)^{-A}.
\end{align*}
Thus it suffices to prove Lemma \ref{majolem} for $\Lambda_3'$.

We set
\begin{align*}
a_n=\mathbbm{1}_{n\equiv a(q)}\sum_{\substack{n=k^2+l^2\\}}\omega_3'(l)
\end{align*}
and 
\begin{align*}
A_d(T)=\sum_{\substack{n\equiv 0 (d)\\ n\leq T }}a_n.
\end{align*}
As in \eqref{mapprox} we have the approximation 
\begin{align}\label{L3MT}
M_d(T)=\frac{1}{dq}\sum_{k^2+l^2\leq T}\omega_3'(l)\varrho_l(d;q,a)
\end{align}
with $\varrho_l(d;q,a)=0$ except if $(d,q)=1$. If $(d,q)=1$ we have again
\begin{align*}
\varrho_l(d;q,a)=\varrho_l(d)\varrho_l(q,a).
\end{align*}
We require only information for square free $d$ and in that case have
\begin{align*}
\varrho_l(d)&=\prod_{\substack{p|d\\p\nmid l}}\bigl(1+\chi(p)\bigr).
\end{align*}
Writing $M_d(T)=A_d(T)+R_d(T)$ and opening $\theta^+$ we get 
\begin{align}\nonumber
\sum_{\substack{n\leq T\\n\equiv a(q)}}\Lambda_3'(n,x)=(\log x)\sum_{d|P(x^{1/2-2\delta_0})}\lambda^+_d M_d(T)+O\bigl(\sum_{d\leq x^{1/2-\delta_0}D_0}|R_d(T)|\bigr),
\end{align}
where the sieve weights $\lambda^+_d$ are the combination of $\lambda^+_I$ and $\lambda^+_{II}$ as given by \eqref{mainsieve}. By \eqref{linrem} we can bound the remainder and have
\begin{align}
\sum_{\substack{n\leq T\\n\equiv a(q)}}\Lambda_3'(n,x)=(\log x)\sum_{d|P(x^{1/2-2{\delta_0}})}\lambda^+_d M_d(T)+O_A(x(\log x)^{-A})\label{L3eq}
\end{align}

We now consider the main term $M_d(T)$ as given by \eqref{L3MT}. The fact that $\varrho_l(d)$ depends on both $l$ and $d$ is preventing a similar treatment of the sieve as before. However terms with $(l,d)\neq 1$ can be shown to only give negligible contribution to $M_d(T)$. We have
\begin{align*}
\frac{1}{d}\sum_{\substack{l\leq \sqrt{T}\\ (l,d)\neq 1}}\omega_3'(l)\rho_l(d)&=\frac{1}{2 d}\sum_{\substack{l\leq \sqrt{T}\\ (l,d)\neq 1}}\rho_l(d) \sum_{\substack{z_1\leq p_1<p_2<p_3< z\\l=p_1p_2p_3m}}\rho(m,p_1) \\
&\ll_\epsilon \frac{1}{d^{1-\epsilon}} \sum_{\substack{j|d \\ j\neq 1}}\sum_{\substack{l\leq \sqrt{T}\\ j|l}} \sum_{\substack{z_1\leq p_1<p_2<p_3< z\\l=p_1p_2p_3m}}\rho(m,p_1)\\
&\ll_\epsilon \frac{T^{1/2+\epsilon}}{d^{1-\epsilon}} \sum_{\substack{j|d\\ j\geq z_1}} \frac{1}{j} \\
&\ll_\epsilon \frac{T^{1/2+\epsilon}}{d^{1-\epsilon}z_1}  \\
&\ll_A \frac{\sqrt{T}}{d(\log x)^A}
\end{align*}
In the case $(l,d)=1$ we have
\begin{align*}
\varrho_l(d)=\prod_{p|d}\bigl(1+\chi(p)\bigr).
\end{align*}
So we can define the local density function $g_3(d,q,a)$ by
\begin{align*}
g_3(p,q,a)=\begin{cases}\frac{1+\chi(p)}{p} & \text{ if } p\nmid q\\
1 & \text{ if } p|(q,a)\\
0 & \text{ else}.
\end{cases}
\end{align*}
After again adding terms with $(l,d)\neq 1$ we get
\begin{align*}
M_d(T)&=\frac{g_3(d,q,a)}{q}\sum_{k^2+l^2\leq T}\omega_3'(l)\varrho_l(q,a)+O_A\bigl(\frac{T}{d (\log x)^A}\bigr).
\end{align*}
We now return to \eqref{L3eq}. We get uniformly in the required range of $q$
\begin{align*}
\sum_{\substack{n\leq T\\n\equiv a(q)}}\Lambda_3'(n,x)=\frac{\log x}{q}V_3^+(x,q,a)\sum_{k^2+l^2\leq T}\omega_3'(l)\varrho_l(q,a)+O_A(x(\log x)^{-A}),
\end{align*}
where
\begin{align*}
V_3^+(x,q,a)=\sum_{d|P(z_0)}\lambda_I^+(d)g_3(d,q,a)\sum_{d|P(x^{1/2-2\delta_0},z_0)}\lambda_{II}^+(d)g_3(d,1,0).
\end{align*}
The remaining sieve parts can be evaluated as in the cases before. We set 
\begin{align}\label{C3def}
C_3(x)=(\log x)\prod_p \bigl(1-\frac{\chi(p)}{p-1}\bigr)^{-1} \prod_{p\leq z_0}\Bigl(1-\frac{1+\chi(p)}{p}\Bigr)\sum_{d|P(x^{1/2-2\delta_0},z_0)}\lambda_{II}^+(d)g_3(d,1,0)
\end{align}
and get
\begin{align}\label{L3la}
\sum_{\substack{n\leq T\\n\equiv a(q)}}\Lambda_3'(n,x)=\Theta(q,a)\psi'(q)C_3(x)\prod_p \bigl(1-\frac{\chi(p)}{p-1}\bigr)\sum_{k^2+l^2\leq T}\omega_3'(l)\varrho_l(q,a)+O_A(x(\log x)^{-A}).
\end{align}

We now evaluate the main term. Let $T'\leq \sqrt{T}$ and consider
\begin{align*}
\sum_{\substack{l\leq T'\\l\equiv b(q)}}\omega_3'(l)=\frac{\log \sqrt{x}}{2}\sum_{\substack{l\leq T'\\l\equiv b(q)}}\sum_{\substack{z_1\leq p_1<p_2<p_3<z\\ l=p_1p_2p_3m}}\rho(m,p_1).
\end{align*}
By applying the Siegel-Walfisz Theorem, summation by parts, and Lemma \ref{buch} we have
\begin{align*}
\sum_{\substack{l\leq T'\\l\equiv b(q)}}\omega_3'(l)=&\frac{\Theta(q,b)\log \sqrt{x}}{2}\int_0^{T'} \int_{z_1\leq y_1<y_2<y_3<z}\frac{B(t/(y_1y_2y_3),y_1)}{y_1y_2y_3 \log y_1\log y_2 \log y_3}d\bm{y}dt\\
&+O_A(\sqrt{x}(\log x)^{-A})\\
=&\Theta(q,b)\int_0^{T'}\mathcal{F}_3(t,x)dt+O_A(\sqrt{x}(\log x)^{-A}),
\end{align*}
say. Plugging this into \eqref{L3la}, we obtain
\begin{align*}
\sum_{\substack{n\leq T\\n\equiv a(q)}}\Lambda_3'(n,x)=\frac{\Xi(q,a)}{\varphi(q)} C_3(x)\prod_p \bigl(1-\frac{\chi(p)}{p-1}\bigr)\int_0^{\sqrt{T}}\mathcal{F}_3(t,x)\sqrt{T-t^2}dt+O_A(x(\log x)^{-A}),
\end{align*}
which is \eqref{majolem1} for $i=3$.

In the case $T=x$ we use partial summation and the derivative bound \eqref{derivbound} of Lemma \ref{buch} to obtain 
\begin{align*}
\int_0^{\sqrt{T}}\mathcal{F}_3(t,x)\sqrt{T-t^2}dt=\frac{\pi}{4}x \mathcal{F}_3(\sqrt{x},x)\bigl(1+O((\log x)^{-1})\bigr).
\end{align*}
We now set
\begin{align}\label{C'3def}
C'_3(x)=\mathcal{F}_3(\sqrt{x},x)
\end{align}
and have showed
\begin{align*}
\sum_{\substack{n\leq x\\n\equiv a(q)}}\Lambda_3'(n,x)=\frac{H\Xi(q,a)}{\varphi(q)}xC_3(x)C'_3(x)\bigl(1+o(1)\bigr),
\end{align*}
as required for \eqref{majolem2}.

We now bound $C_3(x)$ that is defined in \eqref{C3def}. We observe that as $y\to \infty$
\begin{align*}
\prod_{p\leq y}\Bigl(1-\frac{1+\chi(p)}{p} \Bigr)=\frac{1}{e^{\gamma}\log y}\prod_p\Bigl(1-\frac{\chi(p)}{p-1} \Bigr)\bigl(1+o(1)\bigr).
\end{align*}
By Lemma \ref{ls} together with the definition of $F(s)$ for $1\leq s \leq 2$ we obtain the bound 
\begin{align}\label{C_3bound}
C_3(x)\leq \frac{4 }{1-2\delta_0}\bigl(1+o(1)\bigr).
\end{align}
We complete the case $i=3$ by noting that
\begin{align*}
\mathcal{F}_3(\sqrt{x},x)&=\frac{\log \sqrt{x}}{2} \int_{z_1\leq y_1<y_2<y_3<z}\frac{B(\sqrt{x}/(y_1y_2y_3),y_1)}{y_1y_2y_3 \log y_1\log y_2 \log y_3}d\textbf{y}\\
&=\frac{1}{2}\int_{\xi_1\leq \beta_1\leq \beta_2\leq \beta_3\leq \xi}\frac{\mathcal{B}(\frac{1-\beta_1-\beta_2-\beta_3}{\beta_1})}{\beta_1^2\beta_2\beta_3}d\bm{\beta}.
\end{align*}
So by \eqref{C'3def} and \eqref{C_3bound} we get
\begin{align}\label{C_3cbound}
C_3(x)C_3'(x)\leq \frac{2}{1-2\delta_0}\int_{\xi_1\leq \beta_1\leq \beta_2\leq \beta_3\leq \xi}\frac{\mathcal{B}(\frac{1-\beta_1-\beta_2-\beta_3}{\beta_1})}{\beta_1^2\beta_2\beta_3}d\bm{\beta}+o(1).
\end{align}

\subsubsection*{Estimate of $\alpha^+(x)$}
We have showed $\eqref{majolem1}$ and $\eqref{majolem2}$ for all $i$. To show the estimate \eqref{majolem3} we recall the proved estimates for $C_i(x)C_i'(x)$. By \eqref{C_1bound}, \eqref{C_2bound}, and \eqref{C_3cbound} we have
\begin{align*}
C_1(x)C'_1(x)&\leq \frac{2}{2/3-2\delta_0}\Bigl(1+\int_2^{\frac{2/3-2\delta_0}{\xi_1}-1}\frac{\log (t-1)}{t}dt\Bigr)+o(1)\\
C_2(x)C_2'(x)&\leq-\int_{\xi_1}^\xi\frac{\log\bigl(\frac{2/3-2\delta_0-t}{\xi_1}-1 \bigr)}{t(2/3-2\delta_0-t)}dt+o(1)\\
C_3(x)C_3'(x)&\leq \frac{2}{1-2\delta_0}\int_{\xi_1\leq \beta_1\leq \beta_2\leq \beta_3\leq \xi}\frac{\mathcal{B}(\frac{1-\beta_1-\beta_2-\beta_3}{\beta_1})}{\beta_1^2\beta_2\beta_3}d\bm{\beta}+o(1).
\end{align*}
To estimate the third integral, we note that in the range of integration
\begin{align*}
\frac{1-\beta_1-\beta_2-\beta_3}{\beta_1}\leq \frac{1-3\xi_1}{\xi_1}<2.46
\end{align*}
and that
\begin{align*}
\mathcal{B}(u)=\begin{cases}0 &\text{ if } u<1\\
1/u &\text{ if } 1\leq u <2\\
(1+\log(u-1))/u &\text{ if } 2\leq u<3
\end{cases}
\end{align*}
This is \cite[(1.4.16)]{harb} with our extension of the range. We now numerically estimate with the help of Mathematica\textsuperscript{\textregistered}. We define functions for the first two integrals.
\begin{lstlisting}
f1[d_, x_] :=
2/d (1 + Integrate[Log[t - 1]/t, {t, 2, d/x - 1}])
f2[d_, x_, y_]:=
Integrate[Log[(d - t)/x - 1]/(t (d - t)), {t, x, y}]
\end{lstlisting}
We further define the Buchstab function in the necessary range.
\begin{lstlisting}
B[t_] := Piecewise[{{0, t < 1}, {1/t, 1 <= t <= 2}, 
{(1 + Log[t -1])/t, 2 < t <= 3}}] 
\end{lstlisting}
The third integral can then be expressed as follows.
\begin{lstlisting}
f3[o_,x_, y_] := 1/o NIntegrate[B[(1 - a - b - c)/c]/(a b c^2),
{c, x, y}, {b, c, y}, {a, b, y}]
\end{lstlisting}
We now plug in the values $o=1/2-10^{-7}$, $d=2/3-2*10^{-7}$, $x=0.183$, and $y=0.265$. The numerical calculation gives us 
\begin{align*}
C_1(x)C_1'(x)&\leq 3.21513 +o(1)\\
C_2(x)C_2'(x)&\leq -0.29237+o(1)\\
C_3(x)C_3'(x)&\leq 0.051120+o(1).
\end{align*}
Consequently we arrive at
\begin{align*}
\sum_{i\in\{1,2,3\}}C_i(x)C'_i(x)<2.9739+o(1).
\end{align*}
This completes the proof of Lemma \ref{majolem}.
\end{proof}

\subsection{Combinatorical Decomposition}
The result of the previous subsection is sufficient for the major arc case. To show minor arc case of Proposition \ref{prmajo} we use a dissection of $\Lambda^+$ into Type I and II sums in certain ranges. This decomposition is based on a combinatorical sieve approach of Duke, Friedlander, and Iwaniec in \cite{dfi}. We introduce the following notation. For a complex valued sequence $\mathcal{C}=\{c(n)\}$ with finite support we write
\begin{align*}
S(\mathcal{C},z)=\sum_{n}\rho(n,z)c(n).
\end{align*}
Furthermore we set 
\begin{align*}
\mathcal{C}_d=\{c(dn)\}
\end{align*} 
and denote the corresponding partial sums by
\begin{align*}
|\mathcal{C}_d|=\sum_n c(dn).
\end{align*}
Our dissection into linear and bilinear sums is done by Lemma 2 of \cite{dfi} that has some similarities to Vinogradov's approach. Including all requirements it reads as follows.
\begin{lemma}\label{dfiresult} Let $\mathcal{C}=\{c(n)\}$ be a sequence of complex numbers such that 
\begin{align*}
\sum_{n}|c(n)|<\infty
\end{align*}
and
\begin{align}\label{ccond}
\sum_{n\equiv 0(d)}|c(n)|\leq \gamma(d)X,
\end{align}
for some multiplicative function $\gamma$ with
\begin{align*}
\gamma(p)\ll p^{-1}.
\end{align*}
Let further $3\leq K\leq U_1<U_2<z<D_I$ and set
\begin{align*}
y_k=U_2(U_1/U_2)^{k/K}. 
\end{align*}
Then we have
\begin{align}
S(\mathcal{C},z)-\sum_{U_2\leq p<q<z}S(\mathcal{C}_{pq},p)&={\sum_{\substack{d|P(z)\\d<D_I}}}^{U_1}\mu(d)|\mathcal{C}_d|+\sum_{0\leq k<K} \sum_{y_{k+1}\leq p<y_k<q<z}S(\mathcal{C}_{pq},y_k)\nonumber\\
&+\theta X G(z)^2\bigl(2^{-\frac{\log D_I/z}{\log U_1}}+O(K^{-1}\log U_2)\bigr),\label{dfiresulteq}
\end{align}
where the variables $p$ and $q$ are primes,
\begin{align*}
G(z)&=\prod_{p<z}(1+\gamma(p)),\\
|\theta|&\leq 1,
\end{align*}
and $\sum^{U_1}$ ranges over over $d$ having at most one prime factor $\geq U_1$.
\end{lemma}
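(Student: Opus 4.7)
The plan is to derive the decomposition by iterating Buchstab's identity \eqref{buchidentity} on $S(\mathcal{C},z)$ and sorting the resulting terms by an adaptive stopping rule. Writing Buchstab in the form $S(\mathcal{C}_d,y)=|\mathcal{C}_d|-\sum_{p\leq y}S(\mathcal{C}_{dp},p)$, one replaces an open sum at sifting level $y$ by a closed contribution plus new open sums at strictly smaller levels. After several expansions, $S(\mathcal{C},z)$ becomes a signed sum over chains $d=p_1p_2\cdots p_r$ of distinct decreasing primes below $z$, where each closed piece of length $r$ carries the sign $(-1)^r=\mu(d)$. The key step is to choose a stopping rule that halts a given chain as soon as it falls into one of four categories: (A) $d\geq D_I$; (B) $d<D_I$ and $d$ has at most one prime factor $\geq U_1$; (C) $d$ has two prime factors both $\geq U_2$; or (D) $d$ has exactly two prime factors $\geq U_1$ with the smaller in $[U_1,U_2)$.

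Chains halted by (B) assemble into the linear main term $\sum^{U_1}_{d|P(z),\,d<D_I}\mu(d)|\mathcal{C}_d|$ on the right-hand side of \eqref{dfiresulteq}. Chains halted by (C) contribute open sums $S(\mathcal{C}_{pq},p)$ with $U_2\leq p<q<z$, exactly matching the term $\sum_{U_2\leq p<q<z}S(\mathcal{C}_{pq},p)$ subtracted on the left-hand side. For chains halted by (D), partition $[U_1,U_2)$ into the $K$ subintervals $[y_{k+1},y_k)$ from the statement, group each such chain by the interval containing its smaller large-prime $p$, and apply Buchstab once more to raise the sieving parameter from $p$ to $y_k$; this yields the dyadic term $\sum_{0\leq k<K}\sum_{y_{k+1}\leq p<y_k<q<z}S(\mathcal{C}_{pq},y_k)$, with a level-switching remainder per chain of log-size $\log(y_k/y_{k+1})=K^{-1}\log(U_2/U_1)\leq K^{-1}\log U_2$.

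Chains halted by (A), together with the level-switching discrepancies from (D), produce the error. Using the sieve axiom \eqref{ccond} one has $|\mathcal{C}_d|\leq \gamma(d)X$, and since all primes of such a $d$ are $<U_1$ while $d\geq D_I$, the chain length obeys $r\geq \log(D_I/z)/\log U_1$. Summing $\gamma(d)$ over all admissible chains via $\gamma(p)\ll p^{-1}$ and a Rankin-style trick produces the factor $G(z)^2\cdot 2^{-\log(D_I/z)/\log U_1}$, where one $G(z)$ controls the chain enumeration and the second records the auxiliary sums arising in (D). The main obstacle is combinatorial bookkeeping: one must verify that the stopping rule partitions all chains produced by the iteration into exactly the four cases (with signs matching $\mu(d)$), and that the single additional Buchstab application in case (D) neither double-counts nor leaks into the other cases, so that the dyadic remainder is bounded only by $cK^{-1}\log U_2$ per chain rather than by something worse.
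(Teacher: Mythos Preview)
The paper does not give its own proof of this lemma; it is simply quoted as Lemma~2 of \cite{dfi}. Your proposal is a sketch of the argument that Duke, Friedlander and Iwaniec actually carry out there: iterated Buchstab with an adaptive stopping rule, separation of the resulting chains into the linear piece (your case (B)), the two bilinear pieces (your (C) and (D)), and a tail estimate for overlong chains (your (A)), followed by a level-alignment step on the $[U_1,U_2)$ bilinear terms via the $K$-fold partition $y_k$. So at the level of strategy you are reproducing, not replacing, the source argument.

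One point to be careful about in your write-up. In case (D) you say you ``apply Buchstab once more to raise the sieving parameter from $p$ to $y_k$''. Buchstab does not raise a sifting level; what is actually done is to bound the discrepancy
\[
S(\mathcal{C}_{pq},p)-S(\mathcal{C}_{pq},y_k)=\sum_{p\leq p'<y_k}S(\mathcal{C}_{pqp'},p')
\]
using \eqref{ccond} and the shortness of $[y_{k+1},y_k)$, which is what produces the $cK^{-1}\log U_2$ in the error. Likewise, in case (A) the bound $r\geq \log(D_I/z)/\log U_1$ is not quite the count of prime factors of $d$ but of those $<U_1$ (the at most one factor $\geq U_1$ is already absorbed by the $z$ in $D_I/z$), and the second $G(z)$ factor in \cite{dfi} comes from summing the open sifted sums $S(\mathcal{C}_d,\cdot)$ rather than from the (D) bookkeeping. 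These are phrasing issues rather than gaps; the skeleton you give is the correct one.
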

As in \cite{dfi}, for a given sequence $\mathcal{C}=\{c(n)\}$ of complex numbers we define the general linear or Type I sums with unspecified coefficients $|\lambda_d|\leq 1$ as
\begin{align}\label{gtypeI}
\mathcal{R}_{I}(D_I;\mathcal{C})=\sum_{d<D_I}\lambda_d \sum_{n}c(dn).
\end{align}
Let $\nu(n)$ denote the number of prime divisors of $n$, counted without multiplicity. For coefficients $|\alpha_n|\leq \nu(n)$, $|\beta_d|\leq 1$, $\beta$ supported on primes only, we define the general bilinear or Type II sums by
\begin{align}\label{gtypeII}
\mathcal{R}_{II}(U_1,U_2;\mathcal{C})=\sum_{U_1\leq d<U_2}\beta_d\sum_{\substack{n\\(d,n)=1}}\alpha_n c(dn).
\end{align}
To apply Lemma \ref{dfiresult} in our case, we define for $i\in\{1,2,3\}$ the sequences 
\begin{align*}
\mathcal{C}_i(\gamma)=\mathcal{C}_i(\gamma;W,b,x)=\{c_{i,W,b,x}(n,\gamma)\},
\end{align*}
where for $n\equiv b(W)$ and $n\leq x$ we set
\begin{align*}
c_i(n,\gamma)=c_{i,W,b,x}(n,\gamma)=e(\gamma (n-b)/W) \sum_{n=k^2+l^2}\omega_i(l)
\end{align*}
with $\omega_i(l)$ as in the construction of $\Lambda^+$. Further we set $c_i(n,\gamma)=0$ in all other cases. With this notation we show the following result. 

\begin{lemma}\label{decomp}
Let $x$, $b$, $W$, and $N$ as in Theorem \ref{MT}. Let further $\gamma\in \mathbbm{R}$, $U_1=x^{(\log \log x)^{-4}}$, $U_2=x^{1/3}$, $z=x^{1/2-(\log \log x)^{-1}}$,  and $D_I=zx^{(\log \log x)^{-2}}$. Let $\mathcal{C}_i=\mathcal{C}_i(\gamma,W,b,x)$ as above and suppose we have for $i \in \{1,2,3\}$  the Type I bound 
\begin{align*}
\mathcal{R}_{I}(D_I,\mathcal{C}_i(\gamma))\ll x(\log x)^{-2}
\end{align*}
and for $i\in \{1,2\}$ the Type II bound
\begin{align*}
\mathcal{R}_{II}(U_1,U_2,\mathcal{C}_i(\gamma))\ll x(\log x)^{-24}.
\end{align*}
Then it holds that
\begin{align*}
|\sum_{n\leq N}\Lambda^+_{W,b}(n)e(\gamma n)|\ll \frac{N}{\log \log x}.
\end{align*}
In particular, the above stated Type I and II bounds for $\gamma\in \mathfrak{m}$ imply Proposition \ref{prmajo} for $\gamma\in \mathfrak{m}$.
\end{lemma}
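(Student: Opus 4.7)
The plan is to decompose $\Lambda^+ = \sum_{i=1}^3 \Lambda_i^+ + E$ as in \eqref{L+def} and estimate each summand's exponential sum separately. The error term $E$ contributes $\ll x^{9/10}$ by \eqref{ebound}, negligible compared to $N/\log\log x$ after $W$-normalization. For $\Lambda_3^+$, the outer factor $\Omega(n,x) = (\log x)\cdot\theta^+(n;x^{1/2-\delta_0},\ldots)$ is itself an upper-bound sieve; since $x^{1/2-\delta_0} < D_I$ for $x$ large, opening the sieve gives directly $\log x\cdot \mathcal{R}_I(x^{1/2-\delta_0};\mathcal{C}_3) \ll x/\log x$ by the Type I hypothesis.

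The main work is for $\Lambda_1^+$ and $\Lambda_2^+$, where the outer factor is $\Lambda(n)$. We would drop the negligible prime-power contribution ($O(x^{1/2+\epsilon})$) and the small-prime contribution ($O(\sqrt{x}\log x)$, using $|c_i(p,\gamma)| \ll r_2(p)\log x$), reduce by partial summation to bounding $(\log x)\sum_{p>z}c_i(p,\gamma)$, use Buchstab to write $\sum_{p>z}c_i(p,\gamma) = S(\mathcal{C}_i,z) - \sum_{z<p<q,\,pq\leq x} c_i(pq,\gamma)$, and apply Lemma \ref{dfiresult} to $\mathcal{C}_i(\gamma)$ with the given parameters and $K=(\log x)^{25}$. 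This choice ensures both the DFI error term and the aggregate of $K$ Type II bounds $K\cdot x(\log x)^{-24}$ are $\ll x/\log\log x$, while the DFI Type I contribution is absorbed by the hypothesis $\mathcal{R}_I\ll x(\log x)^{-2}$.

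The hard part is the bilinear leftover: combining the Buchstab residue with the DFI leftover $\sum_{U_2\leq p<q<z,\,pq\leq x} c_i(pq,\gamma)$ yields a signed bilinear sum over $P_2$-numbers $pq$ with both prime factors in $[U_2,z)$ or both in $(z,\sqrt{x}]$, ranges outside the hypothesised Type II window $[U_1,U_2]$. We would handle these by a secondary sieve decomposition---an iterated application of Lemma \ref{dfiresult} after dyadic splitting of the smaller prime, or alternatively a Cauchy--Schwarz argument exploiting the narrow concentration of these $P_2$'s---that pushes at least one variable into $[U_1,U_2]$ so that the Type II hypothesis again applies, at a polylogarithmic cost absorbable by the $(\log x)^{-24}$ saving in $\mathcal{R}_{II}$.

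Combining everything yields $|\sum_{n\leq N}\Lambda^+_{W,b}(n)e(\gamma n)| \ll N/\log\log x$. For $\gamma\in\mathfrak{m}$ the bare geometric sum $\sum_{n\leq N}e(\gamma n)$ is itself very small by the minor-arc separation built into \eqref{Mqadef}--\eqref{Mganzdef}, so this bound immediately gives Proposition \ref{prmajo} for $\gamma\in\mathfrak{m}$.
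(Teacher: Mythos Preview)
Your overall architecture (split into $\Lambda_3^+$ via Type~I, then for $i=1,2$ reduce to $\sum_{p>z}c_i(p,\gamma)$ via Buchstab and feed $S(\mathcal{C}_i,z)$ into Lemma~\ref{dfiresult}) matches the paper. The gap is in your treatment of the two leftover bilinear pieces, and in one technical omission.

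First, neither leftover requires any nontrivial input. The DFI leftover $\sum_{U_2\le p<q<z}S(\mathcal{C}_{i;pq},p)$ collapses to $\sum_{U_2\le p<q<z}c_i(pq,\gamma)$ because $U_2=x^{1/3}$ forces the only admissible multiple of $pq$ below $x$ with no prime factor $<p$ to be $pq$ itself. The number of pairs $(p,q)$ is at most $z^2=x^{1-2/\log\log x}$, which beats any fixed power of $\log x$; so once each $|c_i(pq,\gamma)|$ is bounded by $(\log x)^{O(1)}$, the whole sum is $\ll x(\log x)^{-2}$ trivially. Similarly, the Buchstab residue $\sum_{z<p\le x^{1/2}}\sum_j\rho(j,p)c_i(pj,\gamma)$ is handled by taking absolute values and sieving the inner sum (using the linear level of distribution \eqref{linrem}); this gives $\ll \frac{x\,\Xi(W,b)}{\varphi(W)\log x}\sum_{z<p\le x^{1/2}}p^{-1}$, and Mertens over the short interval $[z,x^{1/2}]$ supplies the factor $1/\log\log x$. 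Your proposed ``iterated DFI'' or ``push a variable into $[U_1,U_2]$'' cannot work here: both prime factors already exceed $U_2$, they are primes so cannot be factored further, and no amount of Cauchy--Schwarz creates a variable in the Type~II window. The whole point of the parameter choices $U_2=x^{1/3}$ and $z=x^{1/2-(\log\log x)^{-1}}$ is precisely to make these leftovers trivially small.

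Second, you omit the truncation $\tau(l)<(\log x)^{15}$ (the passage from $\omega_i$ to $\omega_i^\sharp$). Without it you do not have a uniform pointwise bound $|c_i(n,\gamma)|\le(\log x)^{O(1)}\tau(n)$, which is what the paper uses both to verify condition~\eqref{ccond} of Lemma~\ref{dfiresult} with $X=x(\log x)^{15}$ and to bound each $|c_i^\sharp(pq,\gamma)|$ in the DFI leftover. The passage from $\mathcal{C}_i^\sharp$ back to $\mathcal{C}_i$ (so that the hypothesised Type~I/II bounds apply) is then done by a separate Cauchy--Schwarz argument controlling the $\tau(l)\ge(\log x)^{15}$ tail.
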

\begin{proof}
By construction $\Lambda^+$ consists of a part where the outer function is $\Lambda$ and a part where it is a sieve. The part where the outer function is a sieve is $\Lambda_3$.  Recalling that the sieve $\Omega$ has level $x^{1/2-10^{-7}}$, the assumed Type I bound for $i=3$ gives us
\begin{align*}
\sum_{n\leq N}\Lambda_{3;W,b}(n,x)e(\gamma n)&= \sum_{n\leq N}\Omega_{W,b}(n,x)e(\gamma n)\sum_{n=k^2+l^2}\omega_3(l)\\
&\ll \frac{x}{\log x}\\
&\ll \frac{N}{\log \log x},
\end{align*}
as required.

For the rest of the proof we have to consider only $i\in \{1,2\}$. We want to apply Lemma \ref{dfiresult} on the remaining constituents of $\Lambda^+$ that are of the form 
\begin{align*}
\Lambda^{\omega_i}(n)=\Lambda(n)\sum_{n=k^2+l^2}\omega_i(l).
\end{align*}
Before we do so, we remove the contribution of $l$ with many divisors, which simplifies showing a suitable version of \eqref{ccond}. Let $\tau(l)< \mathcal{L}:=(\log x)^{15}$ and write $\omega_i(l)=\omega_i^\sharp(l)+\omega_i^\flat(l)$ with
\begin{align*}
\omega_i^\sharp(l)&:=\begin{cases} \omega_i(l) & \text{if } \tau(l)<\mathcal{L}\\
0,& \text{else} \\
\end{cases} 
\end{align*}
and
\begin{align*}
\omega_i^\flat(l)&:=\begin{cases} 0, & \text{if } \tau(l)<\mathcal{L}\\
\omega_i(l),& \text{else.}
\end{cases}
\end{align*}

By using the bounds \eqref{o1bound}, \eqref{o2bound} we estimate
\begin{align*}
|\sum_{n\leq N}\Lambda^{\omega_i^\flat}_{W,b}e(\gamma n)|&\leq \frac{\varphi(W)}{\Xi(W,b)WH}(\log x)\sum_{k^2+l^2\leq x}|\omega_i^\flat(l)|\nonumber \\
&\leq \frac{\varphi(W)}{\Xi(W,b)WH}(\log x)^3\sqrt{x}\sum_{l\leq \sqrt{x}}\frac{\tau(l)^2}{\mathcal{L}}\nonumber \\
&\ll \frac{x (\log x)^7}{\mathcal{L}}.
\end{align*}
We thus have
\begin{align}\label{t0readd}
|\sum_{n\leq N}\Lambda^{\omega_i}_{W,b}(n)e(\gamma n)|=|\sum_{n\leq N}\Lambda^{\omega_i^\sharp}_{W,b}(n)e(\gamma n)|+O\bigl( \frac{x (\log x)^7}{\mathcal{L}}\bigr).
\end{align}
Assume that $z$ is as given in the Lemma. It follows by our normalisation that for any $\epsilon>0$
\begin{align*}
|\sum_{n\leq N}\Lambda^{\omega_i^\sharp}_{W,b}(n)e(\gamma n)|&=\frac{\varphi(W)}{\Xi(W,b)WH}\sum_{\substack{n\leq x\\ n\equiv b(W)}}\Lambda(n)e(\gamma (n-b)/W)\sum_{n=k^2+l^2}\omega_i^\sharp(l)+O_\epsilon(x^{\epsilon})\\
&=\frac{\varphi(W)}{\Xi(W,b)WH}\sum_{\substack{z\leq p\leq x\\ p\equiv b(W)}}(\log p) e(\gamma (p-b)/W)\sum_{p=k^2+l^2}\omega_i^\sharp(l)+O_\epsilon(x^{1/2+\epsilon}).
\end{align*}
We write $\mathcal{C}_i^\flat(\gamma)$ and $\mathcal{C}_i^\sharp(\gamma)$ for the sequences associated to $\omega_i^\flat$ and $\omega_i^\sharp$ respectively. If we have the bound
\begin{align}\label{demessing}
\sum_{z\leq p \leq x}c_i^\sharp(p,\gamma)\ll \frac{x}{(\log x)^2}+\frac{x\Xi(W,b)}{\varphi(W)(\log x)(\log \log x)},
\end{align}
it follows by summation by parts that
\begin{align*}
|\sum_{n\leq N}\Lambda^{\omega_i^\sharp}_{W,b}(n)e(\gamma n)|\ll \frac{N}{(\log \log x)}.
\end{align*}
Considering \eqref{t0readd} and our choice of $\mathcal{L}$, this is sufficient. The rest of the proof consists in showing that \eqref{demessing} holds.
We now show how we can manipulate the sum in \eqref{demessing} into an object to which Lemma \ref{dfiresult} is applicable. We start by applying Buchstab's identity to get
\begin{align}\label{returntoeq}
S(\mathcal{C}_i^\sharp(\gamma),z)=\sum_{z< p \leq x}c_i^\sharp(p,\gamma)+\sum_{z< p \leq x^{1/2}}\sum_{pj\leq x}\rho(j,p)c_i^\sharp(pj,\gamma).
\end{align}
This is similar to \cite{dfi}. However, due to the sparseness of the sequence, we need an additional argument to bound the second sum. We start by applying the triangle inequality and get
\begin{align*}
\bigl|\sum_{z\leq p <x^{1/2}}\sum_{pj\leq x}\rho(j,p)c_i^\sharp(pj,\gamma)\bigr|\leq \sum_{z\leq p<x^{1/2}}\sum_{\substack{\substack{pj\leq x\\ pj\equiv b(W)}}}\rho(j,p)\sum_{\substack{pj=k^2+l^2}}|\omega_i^\sharp(l)|.
\end{align*}
The double sum over $j$ can be estimated with a sieve, since by \eqref{linrem} we have sufficient level of linear distribution. The process is slightly technical, but mostly standard, so we skip it. The obtained estimate is 
\begin{align*}
\bigl| \sum_{z< p \leq x^{1/2}}\sum_{pj\leq x}\rho(j,p)c_i^\sharp(pj,\gamma)\bigr|\ll \frac{x \Xi(W,b)}{\varphi(W) \log x}\sum_{z< p \leq x^{1/2}}\frac{1}{p}+x^{0.9}.
\end{align*}
By Mertens's Theorem we have
\begin{align*}
\sum_{z\leq p <x^{1/2}}\frac{1}{p}\ll \frac{1}{\log \log x}.
\end{align*}
So by \eqref{returntoeq}
\begin{align*}
\sum_{z\leq p \leq x}c_i^\sharp(p,\gamma)= S(\mathcal{C}_i^\sharp(\gamma),z) +O \Bigl(\frac{\Xi(W,b) x}{\varphi(W)(\log x)(\log \log x)}\Bigr).
\end{align*}

To complete the proof of \eqref{demessing}, we now apply Lemma \ref{dfiresult} and the assumed Type I and II estimates to show
\begin{align}\label{demessing2}
|S(\mathcal{C}_i^\sharp(\gamma),z)|\ll \frac{x}{(\log x)^2}.
\end{align}
Since we have the bound
\begin{align*}
|c_i^\sharp(n,\gamma)|\leq (\log x)^{15} \tau(n),
\end{align*}
condition \eqref{ccond} holds with $\gamma(d)=\tau(d)/d$ and $X=x(\log x)^{15}$. Choose $K=(\log x)^{22}$ and assume that $U_1$, $U_2$, $z$, and $D_I$ are as in the statement of Lemma \ref{decomp}. We get
\begin{align}\nonumber
|X G(z)^2\bigl(2^{-\frac{\log D_I/z}{\log U_1}}+cK^{-1}\log U_2\bigr)|&\ll x(\log x)^{19}\bigl(2^{-\frac{(\log \log x)^{-2}}{(\log \log x)^{-4}}}+(\log x)^{-21} \bigr)\\
&\ll \frac{x}{(\log x)^2}.\label{apply1}
\end{align}
We now consider 
\begin{align*}
\sum_{U_2\leq p<q<z}S(\mathcal{C}_{i;pq}^\sharp(\gamma),p)
\end{align*}
that appears in \eqref{dfiresulteq} for the sequence we consider. Since $U_2=x^{1/3}$ we have
\begin{align*}
|S(\mathcal{C}_{i;pq}^\sharp(\gamma),p)&=|c_i^\sharp(pq,\gamma)|\\
&\leq 2 (\log x)^{14}.
\end{align*}
So we get
\begin{align}
|\sum_{U_2\leq p<q<z}S(\mathcal{C}_{i;pq}^\sharp(\gamma),p)|&\ll z^2 (\log x)^{14}\nonumber\\
&\ll \frac{x}{(\log x)^2}\label{apply2}.
\end{align}
We use \eqref{apply1} and \eqref{apply2} in the main statement of Lemma \ref{dfiresult}. So \eqref{dfiresulteq} becomes
\begin{align}
S(\mathcal{C}_i^\sharp(\gamma),z)=\sum_{\substack{d|P(z)\\ d<D_I}}^{U_1}\mu(d)|\mathcal{C}_{i;d}^\sharp(\gamma)|+\sum_{0\leq k<K} \sum_{y_{k+1}\leq p<y_k<q<z}S(\mathcal{C}_{i;pq}^\sharp(\gamma),y_k)+O\bigl(\frac{x}{(\log x)^2}\bigr).\label{decompdfiapplied}
\end{align}
The sum over $d$ is related to the Type I estimate, the double sum over $p$ and $q$ to the Type II estimate.

To apply the assumed Type I and II bounds, we need to go back from $\mathcal{C}_i^\sharp(\gamma)$ to $\mathcal{C}_i(\gamma)$. We achieve this by bounding the contribution of $\mathcal{C}_i^\flat(\gamma)$. For the Type I sum we have
\begin{align*}
\bigl|\sum_{\substack{d|P(z)\\ d<D_I}}^{U_1}\mu(d)|\mathcal{C}_{i;d}^\flat(\gamma)|\bigr|&\leq \sum_{dn\leq x}|c_i^\flat(dn,\gamma)|\\
&\leq \sum_{n\leq x}\tau(n)|c_i^\flat(n,\gamma)|\\
&\leq \bigl(\sum_{n\leq x}\tau(n)^3\bigr)^{1/2}\bigl(\sum_{n\leq x}\frac{|c_i^\flat(n,\gamma)|^2}{\tau(n)}\bigr)^{1/2}.
\end{align*}
By Cauchy's inequality it holds that
\begin{align*}
|c_i^\flat(n,\gamma)|^2&=|\sum_{n=k^2+l^2}\omega_i^\flat(l)|^2\\
&\leq \tau(n)\sum_{n=k^2+l^2}|\omega_i^\flat(l)|^2.
\end{align*}
We get
\begin{align*}
\sum_{n\leq x}\frac{|c_i^\flat(n,\gamma)|^2}{\tau(n)}&\leq (\log x)^4\sqrt{x}\sum_{l\leq \sqrt{x}}\frac{\tau(l)^4}{\mathcal{L}^2}\\
&\ll \frac{x (\log x)^{19}}{\mathcal{L}^2}.
\end{align*}
Using the assumed Type I bound for $\mathcal{C}_i$ and our choice of $\mathcal{L}$, this gives us
\begin{align}
\sum_{\substack{d|P(z)\\ d<D_I}}^{U_1}\mu(d)|\mathcal{C}_{i;d}^\sharp(\gamma)|&=\sum_{\substack{d|P(z)\\ d<D_I}}^{U_1}\mu(d)|\mathcal{C}_{i;d}(\gamma)|+O\Bigl(\frac{x(\log x)^{13}}{\mathcal{L}}\Bigr)\nonumber\\
&\ll \frac{x}{(\log x)^2}.\label{decompIbound}
\end{align}
For the Type II related object we note 
\begin{align*}
\sum_{0\leq k<K} \sum_{y_{k+1}\leq p<y_k<q<z}|S(\mathcal{C}_{i;pq}^\flat(\gamma),y_k)|&\leq \sum_{pqn\leq x}|c_i^\flat(pqn,\gamma)|\nonumber \\
&\leq (\log x)^2 \sum_{n\leq x}|c_i^\flat(n,\gamma)|\nonumber \\
&\leq (\log x)^5 \sum_{k^2+l^2\leq x}\frac{\tau(l)^2}{\mathcal{L}}\nonumber \\
&\leq \frac{x (\log x)^8}{\mathcal{L}}. 
\end{align*}
Furthermore
\begin{align*}
\sum_{y_{k+1}\leq p<y_k<q<z}S(\mathcal{C}_{i;pq}(\gamma),y_k)
\end{align*}
can be interpreted as a Type II sum with coefficients and ranges as required for \eqref{gtypeII}. So we get by our assumed Type II bound 
\begin{align*}
&\sum_{0\leq k<K} \sum_{y_{k+1}\leq p<y_k<q<z}S(\mathcal{C}_{i;pq}^\sharp(\gamma),y_k)\\
\leq& \sum_{0\leq k<K} \bigl|\sum_{y_{k+1}\leq p<y_k<q<z}S(\mathcal{C}_{i;pq}(\gamma),y_k)\bigr|+O\bigl(\frac{x (\log x)^8}{\mathcal{L}} \bigr)\\
\ll& K \frac{x}{(\log x)^{24}}+\frac{x (\log x)^8}{\mathcal{L}}\\
\ll& \frac{x}{(\log x)^2}, 
\end{align*}
which together with \eqref{decompdfiapplied} and \eqref{decompIbound} is sufficient for \eqref{demessing2} and so completes the proof of Lemma \ref{decomp}.
\end{proof}

\section{The Major Arcs} \label{sec4}
In this section we prove that Proposition \ref{prmajo} holds for $\gamma\in \mathfrak{M}$. Here $\mathfrak{M}$ is the set of major arcs that is given by \eqref{Mganzdef}.

Let $f$ be an arithmetic function that fulfills for any fixed $A>0$ uniformly in $q\leq (\log x)^A$ the distribution law
\begin{align}\label{gvert}
\sum_{\substack{n\leq x\\ n\equiv a(q)}}f(n)=\frac{\Xi(q,a)}{\varphi(q)}\sum_{n\leq x}f(n)+O_A(x(\log x)^{-A})
\end{align}
and for any $\epsilon>0$ the bound 
\begin{align}\label{gvert2}
f(n)\ll_\epsilon x^\epsilon.
\end{align}
By Lemma \ref{majolem} and \eqref{o1bound}, \eqref{o2bound}, \eqref{o3bound},  this covers the majorant. The exponential sum we are interested in is given by
\begin{align*}
S(\gamma,T)=\sum_{n\leq T}f(Wn+b)e(\gamma n).
\end{align*}
We start by considering the case $\gamma=a/q$.
\begin{lemma}\label{marc1}
Let $x$, $W$, $N$, and $b$ as in Theorem \ref{MT}. Let further $A>0$, $T\leq N$, $S(\gamma,T)$ as above, and assume $f$ fulfills \eqref{gvert} and \eqref{gvert2}. It holds uniformly in $q\leq (\log x)^{A}$ and $a$ with $(a,q)=1$ that
\begin{align}\label{marc1eq}
S(a/q,T)=\frac{\epsilon(a,q,W,b)}{\varphi(q)}\frac{\Xi(W,b)}{\varphi(W)}\sum_{n\leq TW}f(n)+O_A(x(\log x)^{-A}),
\end{align}
where $\epsilon=1$ if $q=1$, $\epsilon=0$, if $(q,W)>1$ and $|\epsilon|\ll \tau(q)$ if $(q,W)=1$.
\end{lemma}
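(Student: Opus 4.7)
The plan is to split $S(a/q,T)$ along residue classes of $n$ modulo $q$, apply the distribution hypothesis \eqref{gvert} with the enlarged modulus $Wq$, and evaluate the resulting character sum using the multiplicativity and power-blindness of $\Xi$ from Corollary \ref{cor}. Writing
\[
S(a/q, T) = \sum_{r(q)} e(ar/q) \sum_{\substack{n \leq T\\ n\equiv r(q)}} f(Wn+b),
\]
the inner sum is a sum of $f(m)$ over $m \equiv Wr+b \pmod{Wq}$, $m \leq WT+b$. Since $W \ll (\log x)^{0.1+o(1)}$, the modulus $Wq$ is at most $(\log x)^{A+1}$, so \eqref{gvert} (after slightly enlarging $A$) produces
\[
S(a/q,T) = \frac{F}{\varphi(Wq)} \sum_{r(q)} e(ar/q)\,\Xi(Wq, Wr+b) + O_A(x(\log x)^{-A}),
\]
with $F = \sum_{m \leq WT+b} f(m)$. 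The $q$-fold loss in summing the error terms and the boundary difference $\sum_{WT < m \leq WT+b} f(m) = O(W\log x)$ are absorbed into the error.

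The core task is to evaluate the character sum $T(a,q,W,b) := \sum_{r(q)} e(ar/q)\,\Xi(Wq, Wr+b)$. The case $q = 1$ is just \eqref{gvert} with modulus $W$, giving $\epsilon = 1$. In the coprime case $(q,W) = 1$, multiplicativity of $\Xi$ yields $\Xi(Wq, Wr+b) = \Xi(W,b)\,\Xi(q,Wr+b)$, isolating the advertised factor $\Xi(W,b)/\varphi(W)$ and leaving $\epsilon = \sum_{r(q)} e(ar/q)\,\Xi(q, Wr+b)$. By CRT on both $r$ and $a/q$ this factors as $\prod_p L_p$ over the prime powers $p^{k_p}\|q$. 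For $k_p \geq 2$ (or $k_p \geq 3$ when $p = 2$), the power-blindness \eqref{Xipowerblindeq} makes $\Xi(p^{k_p}, \cdot)$ depend on its argument only modulo a smaller modulus; summing the primitive additive character $e(\alpha_p r/p^{k_p})$ (primitive because $(a,q)=1$ gives $(\alpha_p,p)=1$) over the resulting inner fibre yields $L_p = 0$. For the remaining low-exponent factors, a direct Gauss-sum evaluation using the Jacobsthal-type description of $\varrho_c(p,\cdot)$ in \eqref{xidef} shows $|L_p|$ is bounded by an absolute constant, which when multiplied over the prime divisors of $q$ gives $|\epsilon| \ll \tau(q)$.

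If $(q,W) > 1$, pick any prime $p\mid(q,W)$. Since $p\mid W$, we have $Wr+b \equiv b \pmod p$ for every $r$; when $p = 2$, the convention $4 \mid W$ baked into $W = 2\prod_{p\leq w}p$ together with $b \equiv 1\pmod 4$ from Theorem \ref{MT} forces $Wr+b \equiv 1 \pmod 4$. In either case the power-blindness \eqref{Xipowerblindeq} renders the $p$-component of $\Xi(Wq, Wr+b)$ constant in $r$, while the complementary phase $\sum_{r_p(p^{k_p})} e(\alpha_p r_p/p^{k_p})$ vanishes by primitivity, killing the whole product. Hence $T = 0$ and $\epsilon = 0$.

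The main obstacle is precisely the bookkeeping at $p = 2$: without both $4\mid W$ and $b \equiv 1 \pmod 4$, the value $\Xi(4, Wr+b)$ would alternate between $1$ and $0$ as $r$ varies modulo $2$, preventing the primitive-character cancellation required for $\epsilon = 0$ when $2\mid(q,W)$, and also degrading the Gauss-sum bound in the $(q,W)=1$ case at the prime $2$.
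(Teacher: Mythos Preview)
Your argument is correct and follows the same route as the paper: split into residue classes modulo $q$, apply the distribution law \eqref{gvert} at modulus $Wq$, and then analyse the resulting sum $\sum_{r(q)}e(ar/q)\Xi(Wq,Wr+b)$ via the multiplicativity and power-blindness of $\Xi$ from Corollary~\ref{cor}. The paper handles $(q,W)>1$ by grouping the whole $q_W$-part at once while you isolate a single prime $p\mid(q,W)$; both are the same CRT observation. One small imprecision: saying each local factor $|L_p|$ is ``bounded by an absolute constant'' and then multiplying gives only $C^{\omega(q)}\ll\tau(q)^{C'}$, not $\ll\tau(q)$; you need the sharper Gauss-sum estimate $|L_p|\le 1+O(p^{-1/2})$ (which your Jacobsthal computation does yield) to conclude $\prod_{p\mid q}|L_p|\ll\tau(q)$, exactly as the paper records with its $\prod_{p\mid q}\bigl(1+O(p^{-1/2})\bigr)$.
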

\begin{proof}
By sorting into residue classes modulo $q$, \eqref{gvert}, and \eqref{gvert2} we have
\begin{align*}
S(a/q,T)&=\sum_{n\leq T}f(Wn+b)e(\frac{an}{q})\\
&=\sum_{c(q)}e(\frac{ac}{q})\sum_{\substack{n\leq T\\ n\equiv c(q)}}f(Wn+b)\\
&=\sum_{c(q)}e(\frac{ac}{q})\sum_{\substack{n\leq TW+b\\ n\equiv Wc+b(Wq)}}f(n)\\
&=\frac{\sum_{n\leq TW}f(n)}{\varphi(Wq)}\sum_{c(q)}e(\frac{ac}{q})\Xi(Wq,Wc+b)+O_A(x (\log x)^{-A}).
\end{align*}
We now split the considerations depending on whether $q$ and $W$ have a common divisor. 

Let $(q,W)=1$. In that case $W$ is invertible mod $q$ and we denote its inverse by $\overline W$. Using the multiplicativity of $\Xi$ we get
\begin{align*}
\sum_{c(q)}e(\frac{ac}{q})\Xi(Wq,Wc+b)&=\Xi(W,b)\sum_{c(q)}e(\frac{ac}{q})\Xi(q,Wc+b)\\
&=\Xi(W,b)\sum_{c'(q)}e(\frac{a(c'-b)\overline W}{q})\Xi(q,c')\\
&=\Xi(W,b)e(\frac{-ab\overline W}{q})\sum_{c(q)}e(\frac{a\overline W c}{q})\Xi(q,c)
\end{align*}
We set 
\begin{align*}
\epsilon(a,q,W,b)=e(\frac{-ab\overline{W}}{q})\sum_{c(q)}e(\frac{a\overline W c}{q})\Xi(q,c)
\end{align*}
and get \eqref{marc1eq}. Since $(q,W)=1$, in particular $q$ is odd.  A short calculation using the multiplicativiy of $\Xi$ shows
\begin{align*}
|\epsilon(a,q,W,b)|&=\prod_{p|q}\bigl(1+O(p^{-1})\bigr)\\
&\ll \tau(q).
\end{align*} 

Let now $(q,W)>1$ and write $q=q_Wq_r$ with $$q_W=\prod_{\substack{p^\alpha||q\\p|W}}p^\alpha.$$ By sorting into classes modulo $q_W$ we have 
\begin{align}
\sum_{c(q)}e(\frac{ac}{q})\Xi(Wq,Wc+b)&=\sum_{\substack{c'(q_W)\\ d(q_r)}}e(\frac{a(c'q_r+dq_W)}{q})\Xi(Wq_rq_W,Wc'q_r+Wdq_W+b)\nonumber \\
&=\sum_{\substack{c'(q_W)\\ d(q_r)}}e(\frac{ad}{q_r})e(\frac{a c'}{q_W})\Xi(Wq_rq_W,Wc'q_r+Wdq_W+b) \label{c'sum}.
\end{align}
By construction $(q_r,q_W W)=1$ and for odd primes we have $p|W$ if and only if $p|q_W W$. We further recall that $4|W$. By using the properties of $\Xi$ given in Corollary \ref{cor} we have
\begin{align*}
\Xi(Wq_rq_W,Wc'q_r+Wdq_W+b)&=\Xi(q_r,Wdq_W+b)\Xi(W,b)
\end{align*}
and the right hand side is independent of $c'$. Consequently the sum over $c'$ in \eqref{c'sum} vanishes and we have $\epsilon(a,q,W,b)=0$ in this case.
\end{proof}

As usual we handle the remaining $\gamma\in \mathfrak{M}(q,a)$ by summation and integration by parts. This requires an asymptotics for $f(n)$ with good enough error term. We assume that for any fixed $A>0$ we have uniformly in $T\leq x$
\begin{align}\label{partsumasymp}
\sum_{n\leq T}f(n)=\int_0^{\sqrt{T}}\mathcal{F}(y)\sqrt{T-y^2}dy+O_A(x(\log x)^{-A})
\end{align}
for some function $\mathcal{F}(y)$ that is piece-wise continuously differentiable and so admissible for integration by parts. Again by Lemma \ref{majolem}, this covers all required cases for us. We set
\begin{align*}
S_0(\gamma,N)=\sum_{n\leq N}e(\gamma n)
\end{align*}
and prove the following result.
\begin{lemma}\label{marc2}
Let $x$, $W$, $N$ and $b$ as in Theorem \ref{MT}. Let further $A>0$ and 
assume $f$ fulfills the condition of Lemma \ref{marc1} and \eqref{partsumasymp}. We then have for $\gamma\in \mathfrak{M}(q,a)$ that
\begin{align*}
S(\gamma,N)=&\frac{\pi}{4}\frac{\epsilon(a,q,W,b)}{\varphi(q)} \frac{\Xi(W,b)W}{\varphi(W)} \mathcal{F}(\sqrt{x})S_0(\gamma-a/q,N)\\
&+O\Bigl(\frac{\Xi(W,b)W}{\varphi(W)}\bigl(\sqrt{WN}\int_0^{\sqrt{N}}t|(\mathcal{F}'(t\sqrt{W}))|dt+\int_0^{N+1} \sqrt{Wt}|(\mathcal{F}'(\sqrt{tW})|dt\bigr)\Bigr)\\
&+O\Bigl(\int_N^{N+1}|\mathcal{F}(\sqrt{tW})|dt\Bigr)+O_A\bigl(x(\log x)^{-A}\bigr).
\end{align*}

\end{lemma}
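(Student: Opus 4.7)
The strategy is classical: write $\gamma=a/q+\beta$ with $|\beta|\le (\log x)^{A_{\mathfrak{M}}}x^{-1}$, apply Abel summation to reduce to Lemma~\ref{marc1} at $\gamma=a/q$, then invoke \eqref{partsumasymp} and a further integration by parts to isolate the leading contribution proportional to $\mathcal{F}(\sqrt{x})S_0(\beta,N)$.

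Setting $c_n:=f(Wn+b)e(an/q)$ and $C(T):=\sum_{n\le T}c_n=S(a/q,T)$, Abel summation in Riemann--Stieltjes form gives
\begin{align*}
S(\gamma,N)=C(N)e(\beta N)-2\pi i\beta\int_0^N C(t)e(\beta t)dt.
\end{align*}
By Lemma~\ref{marc1} and \eqref{partsumasymp}, and using $|\beta|N\ll(\log x)^{A_{\mathfrak{M}}}$, I may replace $C(t)$ by $\kappa\widetilde M(t)$ up to admissible error $O_A(x(\log x)^{-A})$, where
\begin{align*}
\kappa:=\frac{\epsilon(a,q,W,b)\Xi(W,b)}{\varphi(q)\varphi(W)},\qquad \widetilde M(t):=\int_0^{\sqrt{tW}}\mathcal{F}(y)\sqrt{tW-y^2}\,dy.
\end{align*}
Since $\widetilde M(0)=0$ and $\widetilde M$ is differentiable, reversing the Abel step on this smooth piece gives $\widetilde M(N)e(\beta N)-2\pi i\beta\int_0^N\widetilde M(t)e(\beta t)dt=\int_0^N\widetilde M'(t)e(\beta t)dt$, and the substitution $y=\sqrt{tW}\sin\theta$ inside $\widetilde M'(t)$ yields the clean expression
\begin{align*}
\widetilde M'(t)=\frac{W}{2}\int_0^{\pi/2}\mathcal{F}(\sqrt{tW}\sin\theta)d\theta=\frac{\pi W}{4}\mathcal{F}(\sqrt{tW})+R(t),
\end{align*}
where $R(t):=\frac{W}{2}\int_0^{\pi/2}[\mathcal{F}(\sqrt{tW}\sin\theta)-\mathcal{F}(\sqrt{tW})]d\theta$.

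For the main piece $\frac{\pi W}{4}\mathcal{F}(\sqrt{tW})$ of $\widetilde M'(t)$, replacing $\mathcal{F}(\sqrt{tW})$ by $\mathcal{F}(\sqrt{x})$ and $\int_0^N e(\beta t)dt$ by $S_0(\beta,N)$ produces the stated main term; the $O(1)$ discrepancy is absorbed in the boundary error $\kappa W\int_N^{N+1}|\mathcal{F}(\sqrt{tW})|dt$, and the replacement error $\frac{\pi W\kappa}{4}\int_0^N|\mathcal{F}(\sqrt{tW})-\mathcal{F}(\sqrt{x})|dt$ is bounded by writing the difference as $\int_{\sqrt{tW}}^{\sqrt{x}}|\mathcal{F}'(s)|ds$ and switching the $(t,s)$-order via $s=\sqrt{tW}$, producing the second displayed integral $\kappa W\int_1^{N+1}t|(\mathcal{F}(\sqrt{tW}))'|dt$. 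For the remainder $R(t)$, I bound $|\mathcal{F}(\sqrt{tW}\sin\theta)-\mathcal{F}(\sqrt{tW})|\le\int_{\sqrt{tW}\sin\theta}^{\sqrt{tW}}|\mathcal{F}'(s)|ds$, swap the $(\theta,s)$-integration using $\arcsin(s/\sqrt{tW})\le\pi s/(2\sqrt{tW})$, and then swap the $(t,s)$-order via $s=u\sqrt{W}$; this yields the first displayed integral $\kappa W\sqrt{N}\int_1^{\sqrt{N}}t|(\mathcal{F}(t\sqrt{W}))'|dt$.

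The main technical difficulty is bookkeeping: two different compositions, $\mathcal{F}(\sqrt{tW})$ and $\mathcal{F}(t\sqrt{W})$, appear in the two derivative integrals depending on whether the final change of variable passes through $s=\sqrt{tW}$ or $s=u\sqrt{W}$. Keeping the two regimes separate, carefully verifying that the powers of $t$, $N$ and $W$ assemble into precisely the stated bounds, and confirming that the Abel/Riemann--Stieltjes step produces no additional $|\beta|N$ loss is the main grind.
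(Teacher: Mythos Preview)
Your proposal is correct and follows essentially the same route as the paper: Abel summation in $\beta=\gamma-a/q$, Lemma~\ref{marc1}, the asymptotic \eqref{partsumasymp}, and two integrations by parts to extract $\mathcal{F}(\sqrt{x})S_0(\beta,N)$. The only cosmetic difference is packaging: where the paper swaps the $(y,T)$-order of integration twice to reach $\int_0^N e(\beta T)\int_0^{\sqrt{T}}\frac{\mathcal{F}(\sqrt{W}y)}{2\sqrt{T-y^2}}\,dy\,dT$ and then integrates by parts in $y$, you instead reverse the Abel step directly to $\int_0^N\widetilde M'(t)e(\beta t)\,dt$ and compute $\widetilde M'(t)$ via the substitution $y=\sqrt{tW}\sin\theta$; these produce the same inner integral and the same error bounds.
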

\begin{proof} We start by writing $\gamma=a/q+\beta$ and use summation by parts to get
\begin{align*}
&S(\gamma,N)\\
=&\sum_{n\leq N}f(Wn+b)e(na/q)e(\beta n)\\
=&e(\beta N)\sum_{n\leq N}f(Wn+b)e(na/q)-2 \pi \text{i} \beta \int_0^N e(\beta T)\sum_{n\leq T}f(Wn+b)e(na/q)dT.
\end{align*}
The previously proved Lemma \ref{marc1} can now be applied on both appearing sums to obtain
\begin{align*}
S(\gamma,N)=&\frac{\epsilon(a,q,W,b)\Xi(W,b)}{\varphi(q)\varphi(W)}\Bigl(e(\beta N)\sum_{n\leq WN}f(n)-2 \pi \text{i} \beta \int_0^N e(\beta T)\sum_{n\leq TW}f(n)dT \Bigr)\\
&+O_A\bigl(x(1+|\beta| N)(\log x)^{-A}\bigr).
\end{align*}
By the definition of $\mathfrak{M}(q,a)$ given in \eqref{Mqadef}, after choosing $A$ sufficiently large in terms of $A_\mathfrak{M}$, the error is acceptable for the Lemma. Using the assumption \eqref{partsumasymp} we have for $T\leq N$
\begin{align*}
\sum_{n\leq TW}f(n)&=\int_0^{\sqrt{TW}}\mathcal{F}(y)\sqrt{TW-y^2}dy+O_A\bigl(x(\log x)^{-A})\\
&=W\int_0^{\sqrt{T}}\mathcal{F}(\sqrt{W}y)\sqrt{T-y^2}dy+O_A\bigl(x(\log x)^{-A}).
\end{align*}
We get
\begin{align*}
&e(\beta N)\sum_{n\leq WN}f(n)-2 \pi \text{i} \beta \int_0^N e(\beta T)\sum_{n\leq TW}f(n)dT\\
=&We(\beta N)\int_0^{\sqrt{N}} \mathcal{F}(\sqrt{W}y)\sqrt{N-y^2}dy\\
&-2W\pi \text{i}\beta \int_0^N e(\beta T)\int_0^{\sqrt{T}}\mathcal{F}(\sqrt{W}y)\sqrt{T-y^2}dydT\Bigr)+O_A\bigl(x(1+|\beta| N)(\log x)^{-A}\bigr).
\end{align*}
The error is again admissible. Next we interchange the order of integration and rewrite  
\begin{align*}
&e(\beta N)\int_0^{\sqrt{N}} \mathcal{F}(\sqrt{W}y)\sqrt{N-y^2}dy-2\pi \text{i}\beta \int_0^N e(\beta T)\int_0^{\sqrt{T}}\mathcal{F}(\sqrt{W}y)\sqrt{T-y^2}dydT\\
=&\int_0^{\sqrt{N}}\mathcal{F}(\sqrt{W}y)\Bigl(e(\beta N)\sqrt{N-y^2}-2\pi \text{i}\beta \int_{y^2}^N e(\beta T)\sqrt{T-y^2}dT \Bigr)dy\\
=&\int_0^{\sqrt{N}}\mathcal{F}(\sqrt{W}y)\int_{y^2}^N\frac{e(\beta T)}{2\sqrt{T-y^2}}dTdy.
\end{align*}
We once more interchange order of integration to get
\begin{align*}
\int_0^{\sqrt{N}}\mathcal{F}(\sqrt{W}y)\int_{y^2 }^N\frac{e(\beta T)}{2\sqrt{T-y^2}}dTdy&=\int_{0}^Ne(\beta T)\int_0^{\sqrt{T}}\frac{\mathcal{F}(\sqrt{W}y)}{2\sqrt{T-y^2}}dydT.
\end{align*}
Since
\begin{align*}
\int_{0}^{\sqrt{T}}\frac{1}{2\sqrt{T-y^2}}dy=\frac{\pi}{4}
\end{align*}
and for $t\leq \sqrt{T}$ 
\begin{align*}
\int_{0}^t\frac{1}{2\sqrt{T-y^2}}dy=O(\frac{t}{\sqrt{T}}),
\end{align*}
we get by integration by parts
\begin{align*}
\int_0^{\sqrt{T}}\frac{\mathcal{F}(\sqrt{W}y)}{2\sqrt{T-y^2}}dy=\frac{\pi \mathcal{F}(\sqrt{TW})}{4}+\sqrt{W}\int_0^{\sqrt{T}}(\mathcal{F}'(\sqrt{W}t))O\bigl(\frac{t}{\sqrt{T}} \bigr)dt.
\end{align*}
This gives us
\begin{align*}
&\int_{0}^Ne(\beta T)\int_0^{\sqrt{T}}\frac{\mathcal{F}(\sqrt{W}y)}{2\sqrt{T-y^2}}dydT\\
=&\frac{\pi}{4}\int_0^N e(\beta T)\mathcal{F}(\sqrt{TW})dT+\sqrt{W}\int_0^N \int_0^{\sqrt{T}}(\mathcal{F}'(\sqrt{W}t))O\bigl(\frac{t}{\sqrt{T}} \bigr)dtdT\\
=&\frac{\pi}{4}\int_0^N e(\beta T)\mathcal{F}(\sqrt{TW})dT+O\Bigl(\sqrt{NW}\int_0^{\sqrt{N}}t\bigl|(\mathcal{F}'(\sqrt{W}t))\bigr|dt \Bigr).
\end{align*}
Recall that $N=\floor{\frac{x}{W}}$ and set $N'=\frac{x}{W}$. We then have
\begin{align*}
\int_1^N e(\beta T)\mathcal{F}(\sqrt{TW})dT=\int_0^{N'}e(\beta T)\mathcal{F}(\sqrt{TW})dT+O\bigl(\int_N^{N+1}|\mathcal{F}(\sqrt{Wt})|dt\bigr).
\end{align*}
We integrate one final time by parts to get
\begin{align*}
\int_0^{N'} e(\beta T)\mathcal{F}(\sqrt{TW})dT=\mathcal{F}(\sqrt{x})\int_0^{N'} e(\beta T)dT+O\Bigl(\int_0^{N+1} \sqrt{Wt} \bigl|(\mathcal{F}'(\sqrt{Wt})\bigr|dt\Bigr).
\end{align*}
As
\begin{align*}
\int_0^{N'} e(\beta T)dT&=\sum_{n\leq N}e(\beta n)+O(N|\beta|+1)\\
&=S_0(\gamma-a/q,N)+O((\log x)^{A_\mathfrak{M}}),
\end{align*}
this completes the proof.
\end{proof}
\subsection{Major Arc Pseudorandomness}
We conclude this section by showing that the majorant is pseudorandom on the major arcs.
\begin{lemma}\label{prmajomajor}
Proposition \ref{prmajo} holds for all $\gamma\in \mathfrak{M}$.
\end{lemma}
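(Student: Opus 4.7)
The plan is to decompose $\Lambda^+=\Lambda_1^++\Lambda_2^++\Lambda_3^++E$ according to \eqref{L+def} and apply Lemma \ref{marc2} to each $\Lambda_i^+$ separately. The error term $E$ is negligible: by \eqref{ebound} we have $\sum_{n\leq x}|E(n)|\ll x^{9/10}$, so after the normalisation \eqref{normal} its contribution to the exponential sum is $O(x^{9/10}/\Xi(W,b))=o(\eta N)$. For each $i\in\{1,2,3\}$, both hypotheses of Lemma \ref{marc2}---the distribution \eqref{gvert} and the partial-sum asymptotic \eqref{partsumasymp}---are directly supplied by \eqref{majolem1} of Lemma \ref{majolem}, with the role of $\mathcal{F}$ in \eqref{partsumasymp} played by $C_i(x)\mathcal{F}_i(y,x)$. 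For $i\in\{1,2\}$ the function $\mathcal{F}_i$ is constant in $y$ so the derivative-type error terms in Lemma \ref{marc2} vanish, while for $i=3$ the derivative of $\mathcal{F}_3$ is controlled via \eqref{derivbound} of Lemma \ref{buch} pushed through the triple integral defining $\mathcal{F}_3$, yielding an $o(\eta N)$ error after normalisation.

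Applying Lemma \ref{marc2} to each piece and using the identity $\frac{\pi}{4H}\mathcal{F}_i(\sqrt{x},x)=C_i'(x)$ (which follows for $i=3$ from \eqref{C'3def}, and for $i\in\{1,2\}$ from $\mathcal{F}_i\equiv 1$ together with the expression for $H$ used in the proof of Lemma \ref{majolem}), one obtains for $\gamma=a/q+\beta\in\mathfrak{M}(q,a)$ the formula
$$\sum_{n\leq N}\Lambda^+_{W,b}(n,x)e(\gamma n)=\frac{\epsilon(a,q,W,b)}{\varphi(q)}\,\alpha^+(x)\,S_0(\beta,N)+o(\eta N),$$
where $S_0(\beta,N)=\sum_{n\leq N}e(\beta n)$ and $\epsilon$ is as in Lemma \ref{marc1}.

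It remains to compare this main term to $\alpha^+(x)S_0(\gamma,N)$. When $q=1$, $\epsilon=1$ and $\beta=\gamma$, so the two sides agree. When $q>1$ with $(q,W)>1$, Lemma \ref{marc1} forces $\epsilon=0$; meanwhile $|S_0(\gamma,N)|\ll q\leq(\log x)^{A_\mathfrak{M}}=o(\eta N)$, since $\gamma$ is at distance $\gg 1/q$ from the integers. When $q>1$ with $(q,W)=1$, every prime factor of $q$ exceeds $w=0.1\log\log x$, so $|\epsilon|/\varphi(q)\ll\tau(q)/\varphi(q)\ll 1/\log\log x$, and both $\bigl|\tfrac{\epsilon}{\varphi(q)}\alpha^+(x)S_0(\beta,N)\bigr|$ and $\bigl|\alpha^+(x)S_0(\gamma,N)\bigr|$ are bounded by $\alpha^+(x)\eta N/2$ for $x$ sufficiently large.

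The main obstacle is the bookkeeping of the error terms produced by Lemma \ref{marc2} with $\mathcal{F}=C_3(x)\mathcal{F}_3(\cdot,x)$: one must push the derivative bound \eqref{derivbound} through the triple integral defining $\mathcal{F}_3$ and check that the integrals $\sqrt{N}\int t|\mathcal{F}'|\,dt$ and $\int t|\mathcal{F}'|\,dt$ appearing in Lemma \ref{marc2} indeed normalise to $o(\eta N)$. The subsequent case analysis on $q$ is then straightforward thanks to the $W$-trick, which eliminates all small moduli except $q=1$.
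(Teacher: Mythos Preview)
Your proposal is correct and follows essentially the same route as the paper: apply Lemma \ref{marc2} to each $\Lambda_i^+$ using the asymptotics \eqref{majolem1} (with the derivative bound \eqref{derivbound} handling the $i=3$ error integrals), identify the main term as $\frac{\epsilon}{\varphi(q)}\alpha^+(x)S_0(\gamma-a/q,N)$, and then split into the cases $q=1$, $(q,W)>1$, and $(q,W)=1$ with $q>1$. Your account is in fact more explicit than the paper's in several places---the treatment of $E$ via \eqref{ebound}, the identity $\tfrac{\pi}{4H}\mathcal{F}_i(\sqrt{x},x)=C_i'(x)$, and the observation that $|S_0(\gamma,N)|\ll q$ for $q>1$---but these are exactly the details the paper leaves implicit in arriving at \eqref{marcpreq} and the subsequent case analysis.
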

\begin{proof}
Let $\gamma\in \mathfrak{M}(q,a)$. 

We apply Lemma \ref{marc2} in conjunction with \eqref{majolem1} of Lemma \ref{majolem} to get
\begin{align} \nonumber
\sum_{n\leq N}\Lambda_{i;W,b}(n,x)e(\gamma n)=&\frac{ \epsilon(a,q,W,b)}{\varphi(q)}C_i(x)C_i'(x)S_0(\gamma-a/q,N)\\
&+O\Bigl(\sqrt{WN}\int_0^{\sqrt{N}}t|(\mathcal{F}_i'(t\sqrt{W},x))|dt+\int_0^{N+1} \sqrt{Wt}|(\mathcal{F}'_i(\sqrt{tW,x})|dt\Bigr)\label{eq_new} \\
&+O\Bigl(\frac{\varphi(W)}{\Xi(W,b)W}\int_N^{N+1}|\mathcal{F}_i(\sqrt{tW})|dt \Bigr)+O_A\bigl(x(\log x)^{-A}\bigr)\nonumber.
\end{align}
For $i=1,2$ we have $\mathcal{F}_i(t,x)=1$ and 
\begin{align*}
\mathcal{F}_3(t,x)= \int_{z_1\leq y_1<y_2<y_3<z}\frac{B(t/(y_1y_2y_3),y_1)}{y_1y_2y_3 \log y_1\log y_2 \log y_3}d\bm{y}.
\end{align*}
By Lemma \ref{buch} we have
\begin{align*}
\mathcal{F}_3'(t,x)&\leq  \frac{1}{t} \int_{z_1\leq y_1<y_2<y_3<z}\frac{B(t/(y_1y_2y_3),y_1)}{y_1y_2y_3 \log y_1\log y_2 \log y_3 \log \bigr(t/(y_1 y_2 y_3)\bigl)}d\bm{y}
\end{align*}
and so we can estimate the error terms in \eqref{eq_new} as 
\begin{align*}
O\Bigl(\frac{N}{\log N}\Bigr).
\end{align*}

Thus,
\begin{align}\label{marcpreq}
\sum_{n\leq N}\Lambda^+_{W,b}(n,x)e(\gamma n)=\frac{\epsilon(a,q,W,b)}{\varphi(q)}\alpha^+(x)S_0(\gamma-a/q,N)+O\Bigl(\frac{N}{\log N}\Bigr).
\end{align}
We note that since $\Lambda^+$ is an upper bound for the Fouvry Iwaniec primes, $\alpha^+(x)\gg 1$ and so, together with the bound \eqref{majolem3} and for sufficiently large $x$, we have $\alpha^+(x)=O(1)$.

Let first $\gamma\in \mathfrak{M}(1,0)$. We have then
\begin{align*}
\sum_{n\leq N}\Lambda^+_{W,b}(n,x)e(\gamma n)=\alpha^+(x) S_0(\gamma,N)+O\Bigl(\frac{N}{\log N}\Bigr),
\end{align*}
which is sufficient.

If  $\gamma\not \in \mathfrak{M}(1,0)$ we have
\begin{align*}
\alpha^+(x)S_0(\gamma,N)=o(N).
\end{align*}
To prove Proposition \ref{prmajo} in those cases it suffices to show
\begin{align*}
\sum_{n\leq N}\Lambda^+_{W,b}e(\gamma n)=o(N).
\end{align*}
Let $(q,W)\neq 1$. By Lemma \ref{marc1} we have $\epsilon=0$ in \eqref{marcpreq} and the statement follows. If $(q,W)=1$ we have
\begin{align*}
\Bigl|\frac{ \epsilon(a,q,W,b)}{\varphi(q)}\alpha^+(x)S_0(\gamma-a/q,N)\Bigr|\ll_\epsilon Nq^{-1+\epsilon}.
\end{align*}
We recall $W=2\prod_{p\leq 0.1 \log \log x}p$. Consequently $(q,W)=1$ implies $q\geq 0.1 \log \log x$ and this completes the proof.
\end{proof}

\section{The Minor Arcs}\label{sec5}
In this section minor arc bounds for Type I and II sums are proved that by Lemma \ref{decomp} are sufficient to show that Proposition \ref{prmajo} holds for $\gamma\in \mathfrak{m}$. The appearing Type I and II sums are of the form given in \eqref{gtypeI} and \eqref{gtypeII}. We begin with the harder case of Type II sums. The strategy is as follows. First we transfer the sum into the Gaussian integers, following the work of Fouvry and Iwaniec. We now introduce the required notation. For a Gaussian integer $m\in \mathbbm{Z}[i]$ with real part $m_R$ and imaginary part $m_I$ we have its squared norm \begin{align*}
|m|^2=m_R^2+m_I^2.
\end{align*}
Sums over Gaussian integers are written as sums over $|m|^2$ in this section. In particular, given a function $f: \mathbbm{Z}[i]\to \mathbbm{C}$, we study
\begin{align*}
\sum_{|m|^2\sim M}f(m),
\end{align*}
where the condition $|m|^2\sim M$ is to be interpreted as $M<  |m|^2\leq M'$. Here $M'$ is is a parameter in the range $M<M'\leq 2M$ whose value will be clear from the context. We call $m$ primitive, if $(m_R,m_I)=1$ and denote by
\begin{align*}
\sum_{|m|^2\sim M}^*f(m)
\end{align*}
the above sum restricted to primitive Gaussian integers. For $m,l\in \mathbbm{Z}[i]$ we write $m*l=\Re(m\overline{l})$. The central point of this section is a nontrivial estimation of
\begin{align}\label{basissumme}
\sum_{\substack{|m|^2\sim M\\ d_j|m*l_j}}e(\gamma |m|^2),
\end{align}
where $W\gamma\in \mathfrak{m}$ and for $j\in\{1,2\}$ we are given squarefree $d_j\in \mathbbm{N}$ and primitive $l_j\in \mathbbm{Z}[i]$. The divisibility condition is to be interpreted as holding for both $j$. This task becomes easier the smaller  $d_j$ in relation to $M$ are. We need to consider the case of $d_j$ being almost $\sqrt{M}$. The divisibility condition can be interpreted as $m$ lying in a two dimensional lattice. For that reason, before starting with the proof of the bound, in the next subsection we note some observations on the appearing lattices that help us estimating \eqref{basissumme}. Afterwards we prepare the Type II sums as described above and apply the lattice considerations. This gives us a sufficient bound for the Type II sums. At the end of this section we bound the Type I sums and both results are then combined to complete the proof of Proposition \ref{prmajo}. Throughout this section we do not aim to give the best bounds with respect to the number of appearing logarithms. In particular  we use Cauchy's inequality instead of Hölder's.

We remark that a different way of applying Cauchy's inequality would yield sums of the form
\begin{align*}
\sum_{\substack{|m|^2\sim M\\ d|m*l_j}}e(\gamma |m|^2),
\end{align*}
where only one $d$ appears. By the homogenity of $m*l_j$ the range of $d$ for which nontrivial summation is possible is similar to our approach. This also means that, in contrast to twin prime related problems (see for example Matomäki \cite{mbv}), we do not benefit from well-factorable sieve weights.

\subsection{Lattices}

Assume we are given for $j\in\{1,2\}$ primitive Gaussian integers $l_j\in \mathbbm{Z}[i]$ and square-free $d_j\in \mathbbm{N}$. We set
\begin{align*}
\Gamma(l_1,d_1,l_2,d_2)=\{m\in \mathbbm{Z}[i]: d_j|m*l_j \}.
\end{align*}
This is a two dimensional lattice whose discriminant we call
\begin{align*}
\Delta=\Delta(l_1,d_1,l_2,d_2).
\end{align*}
It can be calculated as follows. By the Chinese Remainder Theorem it is enough to understand the condition
\begin{align*}
p|m*l_i
\end{align*}
for $p|d_i$. If $p \nmid (d_1,d_2)$ this gives a contribution of $p$, since the $l_i$ are primitive. Let now $p|(d_1,d_2)$. For these primes we have two simultaneous congruence conditions
\begin{align*}
p&|m*l_1\\
p&|m*l_2.
\end{align*}
These condition can either define the same set of solutions or not. The first case happens if and only if 
\begin{align*}
p|\Im(l_1\overline{l_2})
\end{align*}
and then gives a contribution of $p$ to the discriminant. In the complementary case the only solutions are $m$ with $p|m_R$ and $p|m_I$. The contribution to the discriminant then is $p^2$. Combining these considerations we get
\begin{align}\label{discr}
\Delta=\frac{d_1d_2}{(d_1,d_2,|\Im(l_1\overline{l_2})|)}.
\end{align}
Of central interest is the intersection of lattices of the above form with an annulus 
\begin{align*}
\Gamma(M;l_1,d_1,l_2,d_2)=\{|m|^2\sim M: d_i|m*l_i \},
\end{align*}
which is the range of summation of \eqref{basissumme}. To sum over the lattice one can use a basis and a choice could look like
\begin{align*}
 \Bigl( \begin{pmatrix}
 0\\
 \Delta
 \end{pmatrix} , 
  \begin{pmatrix}
 1\\
 \tilde{l}
 \end{pmatrix}\Bigr),
\end{align*}
where $\tilde{l}$ depends on $l_i$ and $d_i$. This choice is sufficient as long as $\Delta$ is somewhat smaller than $\sqrt{M}$. However, we need a result for $\Delta$ being almost $M$. This can be accomplished by choosing a different basis for $\Gamma$.

In the two dimensional case we can choose a basis $(b_1,b_2)$ such that the length of $b_i$ is equal to the $i$-th successive minimum of $\Gamma$ (see for example Siegel's lecture notes on the geometry of numbers \cite[X.6]{sc}). We then have
\begin{align}\label{sucmini}
|b_1||b_2|\asymp \Delta.
\end{align}
Let $\mathfrak{B}$ be a function that maps every quadruplet $(l_1,d_1,l_2,d_2)$ to such a basis. Assume we are given $(b_1,b_2)=\mathfrak{B}(l_1,d_1,l_2,d_2)$. We want to parametrise $\Gamma(M;l_1,d_1,l_2,d_2)$ with this basis and define the following sets
\begin{align}
\mathfrak{L}(M,b_1,b_2)&=\{(\lambda_1,\lambda_2)\in \mathbbm{Z}^2: |\lambda_1b_1+\lambda_2b_2|^2\sim M \}\label{Ldef}\\
\mathfrak{L}_2(M,b_1,b_2)&=\pi_2(\mathfrak{L}(M,b_1,b_2))\label{L2def}\\
\mathfrak{L}_1(\lambda_2,M,b_1,b_2)&=\{\lambda_1:(\lambda_1,\lambda_2)\in \mathfrak{L}(M,b_1,b_2)\},\label{L1def}
\end{align}
where $\pi_2$ is the projection on the $2$nd coordinate. Clearly the number of points in the lattice annulus can be counted by
\begin{align*}
|\Gamma(M;l_1,d_1,l_2,d_2)|=|\mathfrak{L}(M,b_1,b_2)|=\sum_{\lambda_2\in \mathfrak{L}_2(M,b_1,b_2)}\sum_{\lambda_1\in \mathcal{L}_1(\lambda_2,M,b_1,b_2)}1,
\end{align*}
which is true for any basis. As the $b_i$ are successive minima, we also can bound the number of lattice points by
\begin{align}\label{latticepoints}
|\Gamma(M;l_1,d_1,l_2,d_2)|\ll \begin{cases} \sqrt{M}|b_1|^{-1}, &\text{if } |b_1|< \sqrt{M}\leq |b_2|\\
M\Delta^{-1}, &\text{if } |b_2|< \sqrt{M}.
\end{cases}
\end{align}
To see this, let $\Gamma'=\frac{1}{\sqrt{\Delta}}\Gamma$ be a rescaled version of $\Gamma$ that is of discriminant $1$. If $R_i$ denote the successive minima of $\Gamma'$ we have $R_i \sqrt{\Delta}=|b_i|$. Let $N(R,\Gamma')$ denote the number of points $\bm{x}$ of $ \Gamma'$ with $|\bm{x}|\leq R$. Then
\begin{align*}
|\Gamma(M;l_1,d_1,l_2,d_2)|=N(\sqrt{2M}/\sqrt{\Delta},\Gamma')-N(\sqrt{M}/\sqrt{\Delta},\Gamma')
\end{align*}
and \eqref{latticepoints} follows from Davenport \cite[Lemma 12.4]{dav}.

We note for later that we have the bounds
\begin{align}\label{L1}
|\mathfrak{L}_1(0,M,b_1,b_2)|\ll \frac{\sqrt{M}}{|b_1|}+1\\
|\mathfrak{L}_2(M,b_1,b_2)|\ll \frac{\sqrt{M}}{|b_2|}+1\label{L2},
\end{align}
and for any $\lambda_2$
\begin{align}\label{L3}
|\mathfrak{L}_1(\lambda_2,M,b_1,b_2)|\ll |\mathfrak{L}_1(0,M,b_1,b_2)|.
\end{align}
We conclude this subsection with a lemma that is useful to estimate the number of lattices of the type we consider to which certain vectors can belong.
\begin{lemma}\label{X} Let $k\in \mathbbm{N}$. It holds that
\begin{align*}
\sum_{\substack{|v_i|^2\sim V_i\\ v_1*v_2\neq 0}}\tau(|v_1*v_2|)^k\ll V_1V_2(\log V_1V_2)^{2^{2k-1}+5/2}.
\end{align*}
\end{lemma}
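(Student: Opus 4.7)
The plan is to fix $v_1$ and control the inner sum over $v_2$ by slicing along the level sets of the bilinear form $n=v_1*v_2$. Since $v_1*v_2=v_2*v_1$ the sum is symmetric in the two indices, so I may assume $V_1\leq V_2$.

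Fix $v_1\neq 0$ with $|v_1|^2\sim V_1$ and set $g=\gcd((v_1)_R,(v_1)_I)$, so that $v_1=g\tilde v_1$ with $\tilde v_1\in\mathbb{Z}[i]$ primitive and $|\tilde v_1|=|v_1|/g$. The integer equation $v_1*v_2=n$ has a solution only when $g\mid n$; when it does, the solution set is an arithmetic progression on a line in $\mathbb{Z}^2$ with step of length $|\tilde v_1|$. A standard geometric count then gives
\begin{align*}
\#\{v_2\in\mathbb{Z}[i]:|v_2|^2\sim V_2,\ v_1*v_2=n\}\ll \frac{g\sqrt{V_2}}{|v_1|}+1.
\end{align*}
Combining this with the multiplicativity $\tau(gm)^k\leq \tau(g)^k\tau(m)^k$ and the classical moment estimate $\sum_{m\leq X}\tau(m)^k\ll X(\log X)^{2^k-1}$, applied in the range $|n|\leq 2\sqrt{V_1V_2}$, I expect to obtain
\begin{align*}
\sum_{\substack{|v_2|^2\sim V_2\\ v_1*v_2\neq 0}}\tau(|v_1*v_2|)^k\ll \tau(g)^k\Bigl(\frac{V_2\sqrt{V_1}}{|v_1|}+\frac{\sqrt{V_1V_2}}{g}\Bigr)(\log V_1V_2)^{2^k-1}.
\end{align*}

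The next step is to sum over $v_1$ grouped by $g$. Writing $v_1=g\tilde v_1$, the number of admissible primitive $\tilde v_1$ is $\ll V_1/g^2$ with $|\tilde v_1|\asymp\sqrt{V_1}/g$. The contribution from the first term becomes
\begin{align*}
V_2\sqrt{V_1}(\log V_1V_2)^{2^k-1}\sum_{g\geq 1}\frac{\tau(g)^k}{g^2}\cdot\sqrt{V_1}\ll V_1V_2(\log V_1V_2)^{2^k-1},
\end{align*}
using convergence of $\sum_g\tau(g)^k/g^2$. The second term contributes $\ll V_1\sqrt{V_1V_2}(\log V_1V_2)^{2^k-1}$, which under the assumption $V_1\leq V_2$ is $\leq V_1V_2(\log V_1V_2)^{2^k-1}$. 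Since $2^k-1\leq 2^{2k-1}-1$ for every $k\geq 1$, this yields the stated bound. (In fact it gives the sharper exponent $2^k-1$; the looser $2^{2k-1}-1$ is exactly what one would get from a cruder Cauchy--Schwarz step, estimating $\sum\tau^k\leq (V_1V_2)^{1/2}(\sum\tau^{2k})^{1/2}$ and quoting the standard moment bound on $\tau^{2k}$.)

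The main technical point is the lattice-point count on each line $\tilde v_1*v_2=n/g$ inside the annulus $V_2<|v_2|^2\leq 2V_2$, in particular handling lines that only skim the annulus so that the count is $O(1)$ rather than $\sqrt{V_2}/|\tilde v_1|$. The symmetry reduction $V_1\leq V_2$ is what ensures the $+1$ contribution from these marginal lines is absorbed into the target bound $V_1V_2(\log V_1V_2)^{2^{2k-1}-1}$.
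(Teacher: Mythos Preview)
Your argument is correct and genuinely different from the paper's. You fiber over $v_1$ and then over the level sets $v_1*v_2=n$; since for primitive $\tilde v_1$ the kernel of $v_2\mapsto\tilde v_1*v_2$ is $\mathbb{Z}\cdot(-(\tilde v_1)_I,(\tilde v_1)_R)$, the line count $\ll\sqrt{V_2}/|\tilde v_1|+1$ is exactly right, and the subsequent summation over $g$ with the convergent series $\sum_g\tau(g)^k g^{-2}$ is clean. This yields the sharper exponent $2^k-1$, and the symmetry reduction $V_1\leq V_2$ correctly absorbs the $+1$ contribution as $V_1^{3/2}V_2^{1/2}\leq V_1V_2$.

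The paper instead overcounts the annuli by boxes in the real and imaginary coordinates, writes $v_1*v_2=v_{1,R}v_{2,R}+v_{1,I}v_{2,I}$, groups by $c_1=v_{1,R}v_{2,R}$ and $c_2=v_{1,I}v_{2,I}$ (each representable in $\leq\tau(|c_i|)$ ways), and then applies Cauchy--Schwarz to $\sum_{c_1,c_2}\tau(|c_1+c_2|)^k\tau(|c_1|)\tau(|c_2|)$. This is quicker to write and entirely model-free (no lattice geometry, no primitive reduction), but the Cauchy step is what inflates the exponent to $2^{2k-1}-1$. Your geometric route buys the better log-power at the cost of a slightly more careful setup; since the lemma is only used as a crude input and the paper tracks logarithms loosely throughout Section~5, either exponent suffices for the application.
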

\begin{proof}
We write $v_{i,R}$ and $v_{i,I}$ for the real and imaginary part of $v_i$ and use this to overcount the sum by
\begin{align*}&\sum_{\substack{|v_i|^2\sim V_i\\ v_1*v_2\neq 0}}\tau(|v_1*v_2|)^k
\\
\leq& \sum_{\substack{|v_{1,R}|\leq \sqrt{2 V_1}\\ |v_{2,R}|\leq \sqrt{2 V_2}}}\text{ }\sum_{\substack{|v_{1,I}|\leq \sqrt{2 V_1}\\ \substack{|v_{2,I}|\leq \sqrt{2 V_2}\\ v_1*v_2\neq 0}}}\tau(|v_{1,R}v_{2,R}+v_{1,I}v_{2,I}|)^k\\=&\sum_{\substack{|c_i|\leq 2\sqrt{V_1V_2}\\c_1\neq -c_2}}\tau(|c_1+c_2|)^k \sum_{\substack{|v_{1,R}|\leq \sqrt{2 V_1}\\ \substack{|v_{2,R}|\leq \sqrt{2 V_2}\\v_{1,R}v_{2,R}=c_1}}}\text{ }\sum_{\substack{|v_{1,I}|\leq \sqrt{2 V_1}\\ |\substack{v_{2,I}|\leq \sqrt{2 V_2}\\v_{1,I}v_{2,I}=c_2}}}1\\
\ll& \sum_{\substack{c_i\leq 2\sqrt{V_1V_2}\\\substack{c_i\neq 0\\ c_1\neq -c_2}}}\tau(|c_1+c_2|)^k\tau(|c_1|)\tau(|c_2|)+(V_1\sqrt{V_2}+V_2\sqrt{V_1})(\log V_1V_2)^{2^{k+1}-1}.
\end{align*}
An application of Cauchy's inequality on the remaining sum with the standard divisor bound shows the proposed estimate.
\end{proof}
\subsection{Type II Sums}
To show minor arc pseudorandomness we now assume we are given $\gamma \in \mathfrak{m}$ and parameters $x$, $N$, $W$, and $b$ as stated in Theorem \ref{MT}. We further recall the definition of $\omega_i$ for $i\in \{1,2\}$ given in subsection \ref{subsecconst}. The Type II sums given by Lemma \ref{decomp} are of the form
\begin{align*}
\mathcal{R}_{II}(U_1,U_2,\mathcal{C}_i(\gamma))=\sum_{U_1\leq d<U_2}\beta_d\sum_{\substack{n\\(d,n)=1}}\alpha_n c_i(dn,\gamma),
\end{align*}
where for $n\equiv b(W)$ and $n\leq x$ 
\begin{align*}
c_i(n,\gamma)=e(\gamma (n-b)/W)\sum_{n=k^2+l^2}\omega_i(l)
\end{align*}
and $c_i(n,\gamma)=0$ else. To deal with the effect of the $W$ trick on the minor arc, we import the following estimate that is a slight variant of the standard techniques for minor arc bounds.
\begin{lemma}\label{minsumW}
Let $K$ and $q$ be positive integers and suppose that $q\geq 100K$. Let $(a,q)=1$ and suppose that $|\theta-a/q|\leq Kq^{-2}$. It holds that
\begin{align*}
\sum_{1\leq n \leq X}\min\{Y,||\theta n||^{-1} \}\ll  K \bigl(\frac{XY}{q}+Y+(X+q)\log q \bigr)
\end{align*}
\end{lemma}
\begin{proof}
This is \cite[Lemma 1]{greb}.
\end{proof}

\begin{lemma}\label{T2boundl}
Let $U_1$, $U_2$ as in Lemma \ref{decomp} and $\gamma \in \mathfrak{m}$. We have for $i\in \{1,2\}$ 
\begin{align} \label{T2bound}
\mathcal{R}_{II}(U_1,U_2,\mathcal{C}_i(\gamma))\ll x(\log x)^{-24}.
\end{align}
\end{lemma}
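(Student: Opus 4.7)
\medskip

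\noindent\textbf{Proof plan for Lemma \ref{T2boundl}.}

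The plan follows the outline already laid out in the introduction: enter the Gaussian integers as in \cite{foi}, apply Cauchy, and reduce to sums over lattice annuli that we control via the successive minima. First I would open the sieve hidden inside $\omega_i$ so that
\begin{align*}
\mathcal{R}_{II}(U_1,U_2,\mathcal{C}_i(\gamma)) \ll \sum_{c\leq D_S}|\lambda_c| \Bigl| \sum_{U_1\leq d<U_2}\beta_d \sum_{\substack{n\\(d,n)=1}}\alpha_n e(\gamma dn)\sum_{\substack{dn=k^2+l^2\\ c\mid l}} 1\Bigr|,
\end{align*}
where $\lambda_c$ are the sieve coefficients supported on $c\leq D_S$ with $D_S\leq x^{1/3-\delta_0}$. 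I would isolate a dyadic block $d\sim D$, $n\sim N'$ with $DN'\asymp x$. Next, as in \S 8 of \cite{foi}, I reinterpret the inner representation $dn=k^2+l^2$ in $\mathbb{Z}[i]$: writing $m=k+il$ we parametrise the representations $dn=|m|^2$ by Gaussian divisors of $m$, and the congruence $c\mid l$ becomes a linear condition $c\mid m*l'$ for an auxiliary primitive $l'\in \mathbb{Z}[i]$. This transforms the triple sum into an expression of shape
\begin{align*}
\sum_{c\leq D_S}|\lambda_c|\Bigl| \sum_{|l|^2\sim D}^{*}\beta'_l \sum_{\substack{|m|^2\sim M\\ c\mid m*l'}}\alpha'_{|m|^2/|l|^2}\,e(\gamma |m|^2)\Bigr|,
\end{align*}
up to contributions from units, associates and non-squarefree factors (which I would control by a standard divisor bound and the restriction to $\tau(l)<\mathcal{L}$ inherited from Lemma \ref{decomp}).

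Now I would apply Cauchy--Schwarz in $l$ to remove $\alpha'$ and square the inner sum over $m$, producing pairs $(l_1,l_2)$ and, after expansion, an additional pair $(c_1,c_2)$ from squaring the outer $c$-sum. This yields the key quantity
\begin{align*}
S(l_1,l_2,c_1,c_2) \;=\; \sum_{\substack{|m|^2\sim M\\ c_j\mid m*l_j',\ j=1,2}} e\bigl(\gamma(|l_1|^2-|l_2|^2)|m|^2\bigr).
\end{align*}
The summation set is exactly the lattice annulus $\Gamma(M;l_1',c_1,l_2',c_2)$ studied in \S 5.1, with discriminant $\Delta$ given by \eqref{discr}. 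I would split $l_1=l_2$ as a diagonal term (handled trivially using $M/D \ll N'$ and $|l|^2\sim D$) and concentrate on the off-diagonal. Choosing the basis $(b_1,b_2)$ of successive minima and writing $m=\lambda_1 b_1+\lambda_2 b_2$, the sum becomes
\begin{align*}
S = \sum_{\lambda_2\in \mathfrak{L}_2(M,b_1,b_2)} e(\gamma\Theta|\lambda_2 b_2|^2)\sum_{\lambda_1\in \mathfrak{L}_1(\lambda_2,M,b_1,b_2)}e\bigl(\gamma\Theta(|\lambda_1 b_1|^2+2\lambda_1\lambda_2(b_1*b_2))\bigr),
\end{align*}
with $\Theta=|l_1|^2-|l_2|^2$. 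Completing the inner sum and applying the standard $\min(L,\|\alpha\|^{-1})$ bound for exponential sums over an interval, then summing over $\lambda_2$, gives a bound of the shape $\ll |b_2|^{-1}\sqrt{M}\cdot \min\bigl(|b_1|^{-1}\sqrt{M},\|2\gamma\Theta b_1*b_2\|^{-1}\bigr)$, i.e.\ the familiar minima-type bound (cf.\ Chapter 13 of \cite{ik}).

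Finally I would sum the resulting estimate over $c_1,c_2\leq D_S$ and over primitive $l_1,l_2$ with $|l_j|^2\sim D$. The sum over $\Theta=|l_1|^2-|l_2|^2$ is controlled by the well-known minor-arc averaging $\sum_{|\theta|\leq X}\min(M/Q,\|2\gamma\theta\|^{-1})\ll X\log x (M/x+M/Q+Q/x+1)$ valid for $\gamma\in\mathfrak{m}$ with Dirichlet denominator $q\asymp Q$ in $[(\log x)^{A_{\mathfrak M}},x(\log x)^{-A_{\mathfrak M}}]$; the average over the primitive pairs and over $b_1*b_2$ is handled by Lemma \ref{X}, which exactly produces the bound on $\sum \tau(|v_1*v_2|)^k$ needed to absorb the divisor-type weights arising from the sieve expansion. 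Combining these with the trivial bound in the regime $\sqrt{M}\leq |b_1|$ and the range conditions $D_S\leq x^{1/3-\delta_0}$, $U_2\leq x^{1/3}$, one obtains, after multiplying out, a saving of $(\log x)^{-B}$ for any fixed $B$, which for $B=24$ gives \eqref{T2bound}.

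The crux, and main obstacle, is the last step: the discriminant $\Delta \asymp c_1c_2|l_1|\,|l_2|/(c_1,c_2,\ldots)$ is comparable to $M$ when $D_S$ is close to $x^{1/3}$, so we have essentially no room and must exploit the successive minima \emph{both} nontrivially. This is precisely the balance encoded in \eqref{typeIIrangedcondint}, and verifying that the various divisor and logarithmic losses (from opening the sieve, from the restriction to $\tau(l)<\mathcal{L}$, and from Lemma \ref{X}) are absorbed by the minor-arc saving on $\gamma$ is the place where the calculation is delicate.
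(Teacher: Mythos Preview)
Your high-level strategy matches the paper's: step into $\mathbb{Z}[i]$, Cauchy away the unknown coefficients, reduce to a lattice-annulus exponential sum, and finish with the standard sum-over-minima estimate. However, there are two genuine gaps and one muddle.

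\textbf{The quadratic phase.} After parametrising $m=\lambda_1 b_1+\lambda_2 b_2$, the phase $\gamma\Theta|m|^2$ is \emph{quadratic} in $\lambda_1$; it contains the term $\gamma\Theta|b_1|^2\lambda_1^2$. The ``standard $\min(L,\|\alpha\|^{-1})$ bound'' you invoke is for linear exponential sums and does not apply here, so the bound you write down (with $\|2\gamma\Theta\, b_1*b_2\|^{-1}$) is not what one gets. The paper deals with this by a further Cauchy--Schwarz in all outer variables and a Weyl differencing step: squaring $|\sum_{\lambda_1} e(\cdots)|$ and writing $\lambda_{1,2}=\lambda_{1,1}+h$ linearises the phase, yielding $\min\{\sqrt{M/B},\,\|2\gamma\Theta|b_1|^2 h\|^{-1}\}$ with a \emph{new} variable $h$. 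Only after this does the sum-over-minima estimate from \cite[Chapter 13]{ik} become applicable. Without this step there is no mechanism to exploit $\gamma\in\mathfrak m$ at all.

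\textbf{The discriminant.} The lattice $\{m:c_j\mid m*l_j\}$ has discriminant $\Delta=c_1c_2/(c_1,c_2,|\Im(l_1\overline{l_2})|)$ (this is \eqref{discr}); there is no factor $|l_1||l_2|$. With your stated formula $\Delta$ could be as large as $D_S^2\cdot U_2\sim x$, overwhelming $M\ge x^{2/3}$, and the method would collapse. With the correct formula $\Delta\le D_S^2\le x^{2/3-2\delta_0}$, which is just below $M$---this is exactly the balance \eqref{typeIIrangedcondint}. The paper also has to excise by hand the quadruples for which $(c_j,\Im(l_1\overline{l_2}))$ (or $(c_j,\Im(l_1 l_2))$) is large, since otherwise one cannot control the number of lattices containing a given short vector; this accounting is nontrivial and is where Lemma~\ref{X} is actually used.

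\textbf{The Cauchy step.} Your description is inconsistent: you take $\sum_c|\lambda_c|\,|\cdots|$ with absolute values, then say Cauchy ``in $l$'' removes $\alpha'$ and produces pairs $(l_1,l_2)$ and $(c_1,c_2)$. None of this is compatible. What works (and what the paper does) is to keep the sieve sum $\sum_{d\mid m*l}\lambda_d$ \emph{inside} together with the $l$-sum, apply Cauchy in the long variable $m$ to strip off $\alpha_{|m|^2}$, and then open the square to obtain both pairs $(l_1,l_2)$ and $(d_1,d_2)$ simultaneously.
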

\begin{proof}
We show the estimate for arbitrary sequences of complex numbers with $|\alpha_n|\leq \nu(m)$, $|\beta_d|\leq 1$ and $\beta$ supported on primes. To simplify the tracking of the appearing powers of $\log x$ we use the notation $\mathcal{L}$. We start by setting 
\begin{align*}
\mathcal{L}_1=(\log x)^{29}.
\end{align*}

The proof is split into three parts. In the first part we transfer the sums into the Gaussian integers and trivially bound the contribution of certain terms. The second part consists in applying the lattice considerations of the previous subsection and Weyl differencing. Finally, in the third part, we step back into the rational integers and apply estimates that are standard for minor arc bounds.

\subsubsection*{Step 1 - Gaussian Integers and Preparation}

Starting point for the proof of \eqref{T2bound} is the transference of $\mathcal{R}_{II}$ into the Gaussian integers. We closely follow chapter 8 of \cite{foi} and for parameters $M$ and $L$ set $M'=e^{\mathcal{L}_1^{-1}}M$ and $L'=2L$. Recall that $d\sim L$ denotes the range $L<d \leq L'$ and similarly for other parameters. We introduce the short sums
\begin{align*}
\mathcal{R}_1(L,M,i)=\sum_{d\sim L}\sum_{\substack{n\sim M\\(d,n)=1}}\beta_d\alpha_n c_i(dn,\gamma).
\end{align*}
Similar to the work of Fouvry and Iwaniec, we use these sums for $M=e^{\frac{k}{\mathcal{L}_1}}x/U_2$ and $L=2^jU_1$. We get 
\begin{align*}
\mathcal{R}_{II}(U_1,U_2,\mathcal{C}_i(\gamma))\leq \sum_{\substack{x/\mathcal{L}_1<ML<x \\ U_1\leq L < U_2}}|\mathcal{R}_1(L,M,i)|+O(\frac{x(\log x)^{5}}{\mathcal{L}_1}),
\end{align*}
where the error term comes from $mn\leq 2x/\mathcal{L}_1$ or $e^{-2/\mathcal{L}_1}x<mn\leq x$, which are not covered exactly. There are fewer than $2\mathcal{L}_1 (\log x)^2$ short sums in $\mathcal{R}_{II}$, so it suffices to show
\begin{align*}
\mathcal{R}_1(L,M,i)\ll \frac{x(\log x)^3}{\mathcal{L}_1^2}
\end{align*}
for all sequence $\alpha$, $\beta$ as before and all
\begin{align*}
U_1&\leq L < U_2\\
x\mathcal{L}_1^{-1}&<ML\leq x.
\end{align*}
We continue following the argument in \cite{foi} and get for $(d,n)=1$ and $dn\equiv b(W)$ 
\begin{align*}
c_i(dn,\gamma)=e(-\frac{\gamma b}{W})\frac{1}{4}\sum_{|l|^2=d}\sum_{|m|^2=n}\omega_i(m*l)e( |m|^2 |l|^2 \gamma/W),
\end{align*}
Thus it holds that
\begin{align*}
4 |\mathcal{R}_1(L,M,i)|= \bigl|\sum_{|l|^2\sim L}\sum_{\substack{|m|^2\sim M \\ \substack{(|l|^2,|m|^2)=1\\ |m|^2|l|^2\equiv b(W)}}}\beta_{|l|^2}\alpha_{|m|^2}\omega_i(m*l)e( |m|^2 |l|^2 \gamma/W) \bigr|.
\end{align*}
We now remove the condition $|m|^2|l|^2\equiv b(W)$ by noting that
\begin{align*}
4 |\mathcal{R}_1(L,M,i)|= \bigl|\sum_{\substack{a_1,a_2(W)\\a_1a_2\equiv b(W)}}\sum_{\substack{|l|^2\sim L\\ |l|^2\equiv a_1(W)}}\sum_{\substack{|m|^2\sim M \\ \substack{(|l|^2,|m|^2)=1\\ |m|^2\equiv a_2(W)}}}\beta_{|l|^2}\alpha_{|m|^2}\omega_i(m*l)e(|m|^2 |l|^2 \gamma/W)\bigr|.
\end{align*}
We consider fixed values of $a_1$, $a_2$, and absorb the remaining congruence conditions into the sequences $\alpha$ and $\beta$. This leads to\begin{align*}
\mathcal{R}_2(L,M,i):=\sum_{\substack{|l|^2\sim L\\ }}\sum_{\substack{|m|^2\sim M \\ \substack{(|l|^2,|m|^2)=1\\ }}}\beta_{|l|^2}\alpha_{|m|^2}\omega_i(m*l)e(|m|^2 |l|^2 \gamma/W).
\end{align*}
Since $W\leq \log x$, it is enough to show
\begin{align*}
\mathcal{R}_2(L,M,i)\ll \frac{x(\log x)^2}{\mathcal{L}_1^2}
\end{align*}
in the same range and for the same class of sequences $\alpha$ and $\beta$ as before. The next step is to remove the condition $(|l|^2,|m|^2)=1$. In our case this can be easily done, because $\beta$ is supported on primes only. We use Cauchy's inequality to obtain the estimate
\begin{align*}
&\sum_{|l|^2\sim L}\sum_{\substack{|m|^2\sim M\\|l|^2| |m|^2}}|\beta_{|l|^2} \alpha_{|m|^2} \omega_i(m*l)|\\
\ll& (\log x)\Bigl(\sum_{|l|^2\sim L}|\beta_{|l|^2}|^2\sum_{\substack{|m|^2\sim M\\|l|^2| |m|^2}}1\Bigr)^{1/2} \Bigl(\sum_{|l|^2\sim L}\sum_{\substack{|m|^2\sim M\\}}|\omega_i(m*l)|^2 \Bigr)^{1/2}\\ 
\ll& (\log x)^2 \sqrt{M}\Bigl(\sum_{|l|^2\sim L}\sum_{\substack{|m|^2\sim M\\}}|\omega_i(m*l)|^2 \Bigr)^{1/2}.
\end{align*}
By using the bounds \eqref{o1bound}, \eqref{o2bound}, and Lemma \ref{X} on the remaining sum, we get
\begin{align*}
\sum_{|l|^2\sim L}\sum_{\substack{|m|^2\sim M\\|l|^2| |m|^2}}|\beta_{|l|^2} \alpha_{|m|^2} \omega_i(m*l)|\ll \sqrt{L}M (\log x)^9.
\end{align*}
Further, since only primitive $l$ can give prime $|l|^2$, we have
\begin{align*}
\mathcal{R}_2(L,M,i)=\sum_{\substack{|l|^2\sim L\\ }}^*\sum_{\substack{|m|^2\sim M \\ \substack{\\ }}}\beta_{|l|^2}\alpha_{|m|^2}\omega_i(m*l)e(|m|^2 |l|^2 \gamma/W)+O\bigl( \sqrt{L}M (\log x)^9 \bigr).
\end{align*}
The error is admissible, because $L\geq U_1=x^{(\log \log x)^{-3}}$ and $LM\leq x$. This completes the transfer into Gaussian integers and also the part of our argument that is analogous to the work of Fouvry and Iwaniec. 

We note that the $\omega_i$ are supported on nonzero integers only, we can thus include the condition $m*l\neq 0$ in the above sum. To open the functions $\omega_i$ we recall 
\begin{align*}
\omega_1(l)=\theta^+(l,D_1,D_0,z_1,z_0)\log \sqrt{x},
\end{align*}
where $\theta^+$ is a sieve of level 
\begin{align*}
D_1D_0&=x^{1/3-10^{-7}}e^{(\log x)^{2/3}}\\
&\ll x^{1/3-10^{-8}}.
\end{align*}
Furthermore we have for $l>0$
\begin{align*}
\omega_2(l)=\frac{\log \sqrt{x}}{2}\sum_{\substack{z_1\leq p <z\\ l=pm}}\theta^-(m,m,D_1/p,D_0,z_1,z_0)
&=\frac{\log \sqrt{x}}{2}\sum_{\substack{z_1\leq p <z\\ l=pm}}\sum_{\substack{d\leq D_1D_0/p\\ d|m}}\lambda_{d}^-\\
&=\frac{\log \sqrt{x}}{2}\sum_{\substack{d'\leq D_1D_0\\ d'|l}}\lambda'_{d'}
\end{align*}
for some coefficients $|\lambda'_{d'}|\leq \log d'$. We now discard the condition that the sieves are supported on positive integers only. To get the required estimate for $\mathcal{R}_2$ it is consequently sufficient to show
\begin{align}\label{dsumdissect}
\sum_{\substack{|l|^2\sim L\\ }}^*\sum_{\substack{|m|^2\sim M \\ \substack{m*l\neq 0\\ }}}\beta_{|l|^2}\alpha_{|m|^2}\sum_{\substack{d\leq D_1D_0\\ d|m*l}}\lambda_d e(|m|^2 |l|^2 \gamma/W)\ll \frac{x \log x}{\mathcal{L}_1^2}
\end{align}
for all sequences of complex numbers with $|\lambda_d|\leq \log d$. With these considerations we no longer have to discern between the cases $i=1$ and $i=2$. We dissect the sum over $d$ in \eqref{dsumdissect} into fewer than $\log x$ dyadic intervals $D< d\leq D'=2D$ and set
\begin{align*}
\mathcal{R}_3(D,L,M)=\sum_{\substack{|l|^2\sim L\\ }}^*\sum_{\substack{|m|^2\sim M \\ \substack{m*l\neq 0\\ }}}\beta_{|l|^2}\alpha_{|m|^2}\sum_{\substack{d\sim D\\  d|m*l}}\lambda_d e(|m|^2 |l|^2 \gamma/W).
\end{align*}
We have to show
\begin{align*}
|\mathcal{R}_3(D,L,M)|\ll \frac{x}{\mathcal{L}_1^2 } 
\end{align*}
for all 
\begin{align*}
D\ll x^{1/3-10^{-8}}
\end{align*}
and in the same range of $L$ and $M$ as before. An application of Cauchy's inequality shows
\begin{align*}
\bigl|\mathcal{R}_3(D,L,M)\bigr|^2&\leq \Bigl(\sum_{|m|^2\sim M}|\alpha_{|m|^2}|^2\Bigr)\times \Bigl(\sum_{|m|^2\sim M}\bigl|\sum_{\substack{|l|^2\sim L\\ m*l\neq 0}}^*\beta_{|l|^2} \sum_{\substack{d\sim D\\  d|m*l}}\lambda_d e(|m|^2 |l|^2 \gamma/W) \bigr|^2 \Bigr)\\
&=\Bigl(\sum_{|m|^2\sim M}|\alpha_{|m|^2}|^2\Bigr)\times\mathcal{S}_1(D,L,M),
\end{align*}
say. As $|\alpha|^2\ll (\log x)^2$ our goal is now to estimate
\begin{align*}
|\mathcal{S}_1(D,L,M)|\ll \frac{M L^2}{\mathcal{L}_1^4 (\log x)^2}.
\end{align*}
We open the square in the definition of $\mathcal{S}_1$ and from now on use the index $j$ to denote the two copies of the variables $l$ and $d$. We have
\begin{align*}
\mathcal{S}_1(D,L,M)=\sum_{\substack{|l_j|^2\sim L\\}}^*\beta_{|l_1|^2}\overline{\beta_{|l_2|^2}} \sum_{\substack{d_j\sim D\\  }}\lambda_{d_1}\overline{\lambda_{d_2}} \sum_{\substack{|m|^2\sim M \\ \substack{m*l_j\neq 0 \\ d_j|m*l_j }}}e( |m|^2[|l_1|^2-|l_2|^2]\gamma/W).
\end{align*}
For convenience of notation we from now on write $\xi=[|l_1|^2-|l_2|^2]\gamma/W$. Since $|\beta|\leq 1$ and $|\lambda|\leq \log x$ the triangle inequality gives us 
\begin{align*}
\bigl|\mathcal{S}_1(D,L,M)\bigr|\leq (\log x)^2 \sum_{\substack{|l_j|^2\sim L\\ \\ } }^* \sum_{d_j\sim D}\bigl|\sum_{\substack{|m|^2\sim M \\  \substack{d_j|m*l_j \\ m*l_j\neq 0 } }} e(\xi|m|^2) \bigr|.
\end{align*}
The innermost sum is of the type that was considered in the last subsection. Before we can apply the lattice considerations, we prepare the sums by removing the contribution of terms that are not suitable for this. We note the trivial bound
\begin{align*}
\bigl|\mathcal{S}_1(D,L,M)\bigr|&\leq  (\log x)^2 \sum_{\substack{|l_j|^2\sim L\\ \\ } }^*\sum_{\substack{|m|^2\sim M \\  \substack{\\ m*l_j\neq 0 } }}\tau(|m*l_1|)\tau(|m*l_2|)\\ 
\end{align*}
that can be combined with Cauchy's inequality and Lemma \ref{X}. We start by removing diagonal terms with $|l_1|^2=|l_2|^2$ from $\mathcal{S}_1$. We have
\begin{align*}
(\log x)^2 \sum_{\substack{|l_j|^2\sim L\\ |l_1|^2=|l_2|^2 \\ } }^*\sum_{\substack{|m|^2\sim M \\  \substack{\\ m*l_j\neq 0 } }}\tau(|m*l_1|)\tau(|m*l_2|)\leq 4 (\log x)^2\sum_{\substack{|l|^2\sim L\\ \substack{|m|^2\sim M \\ m*l\neq 0 } }}\tau(|m*l|)^2.
\end{align*}
By Lemma \ref{X} and the range condition of $L$ this is negligible. We set
\begin{align}\label{S2}
\mathcal{S}_2(D,L,M)=\sum_{\substack{|l_j|^2\sim L\\ |l_1|^2\neq |l_2|^2 \\ } }^* \sum_{d_j\sim D}\bigl|\sum_{\substack{|m|^2\sim M \\  \substack{d_j|m*l_j \\ m*l_j\neq 0 } }} e(\xi|m|^2) \bigr|
\end{align}
and it is sufficient to show the bound
\begin{align*}
\mathcal{S}_2(D,L,M)\ll \frac{M L^2}{\mathcal{L}_1^4 (\log x)^4}.
\end{align*}
in the same range of $D$, $L$, and $M$ as before. We expect the sum over $m$ in \eqref{S2} to range over $\asymp M/D^2$ many elements, but this is not true pointwise for every quadruplet $(l_1,d_1,l_2,d_2)$. There are two reasons for this. One obstacle is a large value of $(d_1,d_2,|\Im(l_1\overline{l_2})|)$. The second one is the possibility that the second successive minimum of the associated lattice $\Gamma(l_1,d_1,l_2,d_2)$ is of size larger than $\sqrt{M}$. We now deal with the first hurdle. For a technical reason we do more than required and remove terms for which $(d_j,\Im(l_1\overline{l_2}))$ is large for at least one $j$. We start with $j=1$. To do this, we temporarily remove terms for which 
\begin{align*}
\tau(|m*l_1|)>\mathcal{L}_2.
\end{align*}
We have
\begin{align*}
\sum_{\substack{|l_j|^2\sim L\\ |l_1|^2\neq |l_2|^2 \\ } }^* \sum_{d_j\sim D}\bigl|\sum_{\substack{|m|^2\sim M \\  \substack{d_j|m*l_j \\ \substack{ m*l_j\neq 0 \\ \tau(|m*l_1|)> \mathcal{L}_2} } }} e(\xi|m|^2) \bigr|& \leq \sum_{\substack{|l_j|^2\sim L\\ |l_1|^2\neq |l_2|^2 \\ } }^*\sum_{\substack{|m|^2\sim M \\  \substack{m*l_j\neq 0  \\ \tau(|m*l_1|)>\mathcal{L}_2 } }}\tau(|m*l_1|)\tau(|m*l_2|) \\
&< \mathcal{L}_2^{-1}\sum_{\substack{|l_j|^2\sim L\\ |l_1|^2\neq |l_2|^2 \\ } }^*\sum_{\substack{|m|^2\sim M \\  \substack{m*l_j\neq 0  \\ } }}\tau(|m*l_1|)^2\tau(|m*l_2|)\\
&\ll \frac{M L^2 (\log x)^{71} }{\mathcal{L}_2}
\end{align*}
by Cauchy's inequality and Lemma \ref{X} again. The contribution of terms with $$(d_1,\Im(l_1\overline{l_2}))=k$$ among the remaining ones with $\tau(|m*l_1|)\leq \mathcal{L}_2$ can be estimated by
\begin{align*}
\sum_{\substack{|l_j|^2\sim L\\ |l_1|^2\neq |l_2|^2 \\ } }^* \sum_{\substack{d_j\sim D\\ (d_1,\Im(l_1\overline{l_2}))=k}}\bigl|\sum_{\substack{|m|^2\sim M \\  \substack{d_j|m*l_j \\ \substack{ m*l_j\neq 0 \\\tau(|m*l_1|)\leq\mathcal{L}_2}} }} e(\xi|m|^2) \bigr|&\leq \sum_{\substack{|l_j|^2\sim L\\ k| \Im(l_1 \overline{l_2}) \\ } }^* \sum_{\substack{|m|^2\sim M \\  \substack{ \\ \substack{ m*l_j\neq 0 \\ k|m*l_1}} }}\mathcal{L}_2^2 .
\end{align*}
Here the sum over $l_j$ is empty, if $k>4L$. Since $L\leq \sqrt{M}$, by the lattice considerations of the previous subsection, the sum over $m$ ranges over $O(M/k)$ elements. We get
\begin{align*}
\sum_{\substack{|l_j|^2\sim L\\ k| \Im(l_1 \overline{l_2}) \\ } }^* \sum_{\substack{|m|^2\sim M \\  \substack{ \\ \substack{ m*l_j\neq 0 \\ k|m*l_1}} }}\mathcal{L}_2^2
&\ll \frac{\mathcal{L}_2^2 M}{k}\sum_{\substack{|l_j|^2\sim L\\k|\Im(l_1\overline{l_2})}}1\\
&\ll \frac{\mathcal{L}_2^2 M}{k}\sum_{\substack{0\neq |c_j|\leq 2L\\ c_1\equiv c_2(k)}}\tau(|c_1|)\tau(|c_2|)+\frac{\mathcal{L}_2^2M L^{3/2}}{k}\\
&\ll_\epsilon \frac{\mathcal{L}_2^2ML^2 (\log x)^{2}}{k^2}+\mathbbm{1}_{k\geq L^{1/10}}\frac{ML^{2+\epsilon} }{k^2}+\frac{\mathcal{L}_2^2M L^{3/2}}{k},
\end{align*}
where we applied Shiu's Theorem (see \cite{shiu}) in the range $k\leq L^{1/10}$ and estimated the divisor function by $O(L^\epsilon)$ in the remaining range. Summing over $k$ in the range $\mathcal{L}_3<k\ll L$ shows
\begin{align*}
\sum_{\substack{|l_j|^2\sim L\\ |l_1|^2\neq |l_2|^2 \\ } }^* \sum_{\substack{d_j\sim D\\ (d_1,\Im(l_1\overline{l_2}))>\mathcal{L}_3}}\bigl|\sum_{\substack{|m|^2\sim M \\  \substack{d_j|m*l_j \\ \substack{ m*l_j\neq 0 \\\tau(|m*l_1|)\leq \mathcal{L}_2}} }} e(\xi|m|^2) \bigr|\ll \frac{\mathcal{L}_2^2 M L^2 (\log x)^2}{\mathcal{L}_3}+\mathcal{L}_2^2 M L^{3/2} \log x.
\end{align*}
Plugging in these results in the definition of $\mathcal{S}_2$ \eqref{S2} we arrive at
\begin{align*}
\mathcal{S}_2(D,L,M)=&\sum_{\substack{|l_j|^2\sim L\\ |l_1|^2\neq |l_2|^2 \\ } }^* \sum_{\substack{ d_j\sim D\\ (d_1,\Im(l_1\overline{l_2}))\leq \mathcal{L}_3}}\bigl|\sum_{\substack{|m|^2\sim M \\  \substack{d_j|m*l_j \\ \substack{ m*l_j\neq 0 \\\tau(|m*l_1|)\leq \mathcal{L}_2}} }} e(\xi|m|^2) \bigr|\\
&+O\bigl(\frac{\mathcal{L}_2^2 M L^2 (\log x)^2}{\mathcal{L}_3}+\mathcal{L}_2^2 M L^{3/2} \log x+\frac{M L^2 (\log x)^{71} }{\mathcal{L}_2}\bigr).
\end{align*}
We now set 
\begin{align*}
\mathcal{L}_2&=\mathcal{L}_1^4 (\log x)^{75}=(\log x)^{191}\\
\mathcal{L}_3&=\mathcal{L}_2^2 \mathcal{L}_1^4 (\log x)^6=(\log x)^{504}
\end{align*}
and the error term becomes admissible. Completely analogously as above we can also remove the contribution of terms with $(d_2,\Im(l_1\overline{l_2}))>\mathcal{L}_3$ or with $(d_j,\Im(l_1l_2))>\mathcal{L}_3$, which is useful in a moment. After readding the missing terms with $\tau(|m*l_j|)>\mathcal{L}_2$ in the same way they were removed, we get
\begin{align*}
\mathcal{S}_2(D,L,M)&=\sum_{\substack{|l_j|^2\sim L\\ |l_1|^2\neq |l_2|^2 \\ } }^* \sum_{\substack{ d_j\sim D\\ \substack{(d_j,\Im(l_1\overline{l_2}))\leq \mathcal{L}_3\\(d_j,\Im(l_1l_2))\leq \mathcal{L}_3}}}\bigl|\sum_{\substack{|m|^2\sim M \\  \substack{d_j|m*l_j \\ m*l_j\neq 0 } }} e(\xi|m|^2) \bigr|+O(\frac{M L^2}{\mathcal{L}_1^4 (\log x)^4})\\
&=\mathcal{S}_3(D,L,M)+O\bigl(\frac{M L^2}{\mathcal{L}_1^4 (\log x)^4}\bigr),
\end{align*}
say. 

We do not need the condition $m*l_j\neq 0$ anymore and it is hindering later, so we remove it by noting that in $\mathcal{S}_3$ there are no terms with $m*l_1=m*l_2=0$, since the $l_j$ are primitive and $|l_1|^2\neq |l_2|^2$. Let us consider the contribution of terms with 
\begin{align*}
m*l_1&=0\\
m*l_2&\neq 0.
\end{align*}
In that case $m=c i \overline{l_1}$ for some $c\in \mathbbm{Z}_{\neq 0}$, so we have
\begin{align*}
\sum_{\substack{|l_j|^2\sim L\\ |l_1|^2\neq |l_2|^2 \\ } }^* \sum_{\substack{ d_j\sim D\\ \substack{(d_j,\Im(l_1\overline{l_2}))\leq \mathcal{L}_3\\(d_j,\Im(l_1l_2))\leq \mathcal{L}_3}}}\bigl|\sum_{\substack{|m|^2\sim M \\  \substack{d_j|m*l_j \\ \substack{m*l_1= 0 \\ m*l_2\neq 0}} }} e(\xi|m|^2) \bigr|& \leq D \sum_{\substack{|l_j|^2\sim L\\ |l_1|^2\neq |l_2|^2 \\ } }^* \sum_{0<c\leq \sqrt{M/L}}\tau(c (i \overline{l_1})*l_2)\\
&\ll \frac{M L^2}{\mathcal{L}_1^4 (\log x)^4},
\end{align*}
by Cauchy's inequality and our range conditions on $D$, $M$, and $L$. This gives us 
\begin{align*}
\mathcal{S}_3(D,L,M)=\sum_{\substack{|l_j|^2\sim L\\ |l_1|^2\neq |l_2|^2 \\ } }^* \sum_{\substack{ d_j\sim D\\ \substack{(d_j,\Im(l_1\overline{l_2}))\leq \mathcal{L}_3\\(d_j,\Im(l_1l_2))\leq \mathcal{L}_3}}}\bigl|\sum_{\substack{|m|^2\sim M \\  \substack{d_j|m*l_j \\ } }} e(\xi|m|^2) \bigr|+O\bigl(\frac{M L^2}{\mathcal{L}_1^4 (\log x)^4}\bigr)
\end{align*}
and concludes the preparation.
\subsubsection*{Part 2 - Lattice and Weyl Differencing}
We are now ready to apply the lattice considerations on the sum over $m$. Let $\mathfrak{B}$ and $\mathfrak{L}$ \eqref{Ldef} as in the previous subsection. We have
\begin{align*}
&\sum_{\substack{|l_j|^2\sim L\\ |l_1|^2\neq |l_2|^2 \\ } }^* \sum_{\substack{ d_j\sim D\\ \substack{(d_j,\Im(l_1\overline{l_2}))\leq \mathcal{L}_3\\(d_j,\Im(l_1l_2))\leq \mathcal{L}_3}}}\bigl|\sum_{\substack{|m|^2\sim M \\  \substack{d_j|m*l_j \\ } }} e(\xi|m|^2) \bigr|\\=&\sum_{\substack{|l_j|^2\sim L\\ |l_1|^2\neq |l_2|^2 \\ } }^* \sum_{\substack{ d_j\sim D\\ \substack{(d_j,\Im(l_1\overline{l_2}))\leq \mathcal{L}_3\\(d_j,\Im(l_1l_2))\leq \mathcal{L}_3}}} \sum_{\mathfrak{B}(l_j,d_j)=(b_1,b_2)}\bigl|\sum_{\substack{m=\lambda_1 b_1+\lambda_2 b_2\\  \substack{ (\lambda_1,\lambda_2)\in \mathfrak{L}(M,b_1,b_2) \\  } }} e(\xi|m|^2) \bigr|.
\end{align*}
We split the sum over $b_1$ into intervals of the form $B< |b_1|^2\leq B'=2B$ and write
\begin{align*}
\mathcal{S}_4(B,D,L,M)=\sum_{\substack{|l_j|^2\sim L\\ |l_1|^2\neq |l_2|^2 \\ } }^* \sum_{\substack{ d_j\sim D\\ \substack{(d_j,\Im(l_1\overline{l_2}))\leq \mathcal{L}_3\\(d_j,\Im(l_1l_2))\leq \mathcal{L}_3}}} \sum_{\substack{\mathfrak{B}(l_j,d_j)=(b_1,b_2)\\ |b_1|^2\sim B}}\bigl|\sum_{\substack{m=\lambda_1 b_1+\lambda_2 b_2\\  \substack{ (\lambda_1,\lambda_2)\in \mathfrak{L}(M,b_1,b_2) \\ } }} e(\xi|m|^2) \bigr|.
\end{align*}
Since there are fewer than $\log x$ such sums required, our goal has become to show
\begin{align}\label{s4bound}
\mathcal{S}_4(B,D,L,M)\ll \frac{ML^2}{\mathcal{L}_1^4 (\log x)^{5}}
\end{align}
for all
\begin{align*}
B\leq D^2
\end{align*}
in the same range of $L$, $M$, and $D$ as before. We want to later change order of summation and for a fixed $j$ estimate 
\begin{align*}
\sum_{\substack{d\sim D\\ d|b_1*l_j}}1
\end{align*}
by $\tau(|b_1*l_1|)$. To enable this we now show that, with our intended size of parameters, there are no terms in $\mathcal{S}_4$ with $b_1*l_j=0$ for any $j$. Similar as above, we start by noting that there are no terms with
\begin{align*}
b_1*l_1=b_1*l_2=0,
\end{align*}
as the primitivity of $l_j$ would again imply $|l_1|^2=|l_2|^2$ in that case. We now consider the case
\begin{align*}
b_1*l_1&=0\\
b_1*l_2&\neq 0.
\end{align*}
We then have for some $c\in \mathbbm{Z}_{\neq 0}$
\begin{align*}
b_1=ci\overline{l_1}.
\end{align*}
By the summation range of $b_1$ it necessarily holds that
\begin{align}\label{cineq1}
c\leq \frac{2 D}{\sqrt{L}}.
\end{align}
Furthermore, by weakening the condition of $b_1$ being the shortest vector of the lattice to $b_1$ being any element of it, we have
\begin{align}\label{dcong}
d_2|c[(i\overline{l_1})*l_2].
\end{align}
Again by the primitivity of the $l_i$ and the range of summation only including terms with $|l_1|^2\neq |l_2|^2$ we have
\begin{align*}
(i\overline{l_1})*l_2\neq 0.
\end{align*}
Since $c\neq 0$ we see that \eqref{dcong} implies
\begin{align}\label{cineq2}
c\geq \frac{d_2}{([(i\overline{l_1})*l_2],d_2)}.
\end{align}
Combining \eqref{cineq1} and \eqref{cineq2}, we get that the sum over $b_1$ with the above conditions is empty except if
\begin{align*}
([(i\overline{l_1})*l_2],d_2)\geq \frac{\sqrt{L}}{2}.
\end{align*}
Note that $(i\overline{l_1})*l_2=-\Im(l_1 l_2)$. We only are summing over terms for which
\begin{align*}
(d_2,\Im(l_1 l_2))\leq \mathcal{L}_3,
\end{align*}
so as long as
\begin{align}\label{tau_1bed}
\mathcal{L}_3 < \frac{\sqrt{L}}{2}
\end{align}
there are no terms with $b_1*l_1=0$ in $\mathcal{S}_3$. By the range condition on $L$, \eqref{tau_1bed} is true for all sufficiently large $x$. By symmetry we can also deal with the case $b_1*l_2=0$, so for all sufficiently large $x$ it holds that
\begin{align*}
\mathcal{S}_4(B,D,L,M)=\sum_{\substack{|l_j|^2\sim L\\ |l_1|^2\neq |l_2|^2 \\ } }^* \sum_{\substack{ d_j\sim D\\ \substack{(d_j,\Im(l_1\overline{l_2}))\leq \mathcal{L}_3\\(d_j,\Im(l_1l_2))\leq \mathcal{L}_3}}}  \sum_{\substack{\mathfrak{B}(l_j,d_j)=(b_1,b_2)\\ \substack{|b_1|^2\sim B\\b_1*l_j\neq 0 }}}\bigl|\sum_{\substack{m=\lambda_1 b_1+\lambda_2 b_2\\  \substack{ (\lambda_1,\lambda_2)\in \mathfrak{L}(M,b_1,b_2) \\} }} e(\xi|m|^2) \bigr|.
\end{align*} 
In the next steps the summation conditions on $l_j$, $d_j$ and $(b_1,b_2)$ do not change. To simplify the notation we write $\sum'$ for these sums, so that
\begin{align}\label{s4}
\mathcal{S}_4(B,D,L,M)=\sideset{}{'}\sum \bigl|\sum_{\substack{m=\lambda_1 b_1+\lambda_2 b_2\\  \substack{ (\lambda_1,\lambda_2)\in \mathfrak{L}(M,b_1,b_2) \\} }} e(\xi|m|^2) \bigr|.
\end{align}

To allow application of a point wise bound on the number of points in $\Gamma(l_1,d_1,l_2,d_2)$ we now show that lattices with large second successive minimum $b_2$ only give negligible contribution to $\mathcal{S}_4$. By \eqref{sucmini} $|b_2|$ is large if and only if $|b_1|$ is small. Using \eqref{latticepoints}, $(d_1,d_2,\Im(l_1\overline{l_2}))\leq \mathcal{L}_3$, Cauchy's inequality, and Lemma \ref{X} we get the trivial bound
\begin{align*}
\mathcal{S}_4(B,D,L,M)&\ll \max\{\frac{\sqrt{M}}{\sqrt{B}},\frac{M\mathcal{L}_3}{D^2} \} \sum_{\substack{|l_j|^2\sim L\\ |l_1|^2\neq |l_2|^2 \\ } }^* \sum_{\substack{ d_j\sim D\\ \substack{(d_j,\Im(l_1\overline{l_2}))\leq \mathcal{L}_3\\(d_j,\Im(l_1l_2))\leq \mathcal{L}_3}}} \sum_{\substack{\mathfrak{B}(l_j,d_j)=(b_1,b_2)\\ \substack{|b_1|^2\sim B\\b_1*l_j\neq 0 }}}1\\
&\leq \max\{\frac{\sqrt{M}}{\sqrt{B}},\frac{M\mathcal{L}_3}{D^2} \}\sum_{\substack{|l_j|^2\sim L\\ |l_1|^2\neq |l_2|^2 \\ } }^*\sum_{\substack{ \\ \substack{|b_1|^2\sim B\\b_1*l_j\neq 0 }}}\tau(|b_1*l_1|)\tau(|b_1*l_2|) \\
&\ll \max\{\frac{\sqrt{M}}{\sqrt{B}},\frac{M\mathcal{L}_3}{D^2} \} L^2 B (\log x)^{11}.
\end{align*}
By the range conditions for $B$ and $D$, the contribution of the first argument of the maximum is admissable for the required estimate \eqref{s4bound}. If
\begin{align*}
B\leq \frac{D^2}{\mathcal{L}_1^4 \mathcal{L}_3(\log x)^{16}},
\end{align*}
then it holds that
\begin{align*}
\frac{M\mathcal{L}_3 L^2 B (\log x)^{11}}{D^2}\leq \frac{ML^2}{\mathcal{L}_1^4(\log x)^{5}},
\end{align*}
which is sufficient for \eqref{s4bound}. So it remains to show the estimate in the range
\begin{align*}
x^{(\log \log x)^{-3}}\leq L &< x^{1/3} \\
x\mathcal{L}_1^{-1} < ML &\leq x \\
D&\leq x^{1/3-10^{-8}}\\
\frac{D^2}{\mathcal{L}_1^4 \mathcal{L}_3(\log x)^{16}} < B &\leq D^2.
\end{align*}

The remaining steps are standard and include a Weyl differencing step. We use the parametrisation of $m$ and apply the triangle inequality on the right hand side of \eqref{s4} to get
\begin{align*}
\mathcal{S}_4(B,D,L,M)\leq \sideset{}{'}\sum\sum_{\lambda_2\in \mathcal{L}_2 }\bigl|\sum_{\lambda_1\in \mathfrak{L}_1(\lambda_2)} e(\xi[\lambda_1^2 |b_1|^2+2 \lambda_1 \lambda_2 b_1*b_2]) \bigr|,
\end{align*}
where $\mathfrak{L}_1(\lambda_2)=\mathfrak{L}_1(\lambda_2,M,b_1,b_2)$ and $\mathfrak{L}_2=\mathfrak{L}_2(M,b_1,b_2)$ are as defined in \eqref{L2def} and \eqref{L1def}. To sum nontrivially over $\lambda_1$ we apply Cauchy's inequality as follows. We have
\begin{align*}
&(\mathcal{S}_4(B,D,L,M))^2\\
\leq& \Bigl( \sideset{}{'}\sum\sum_{\lambda_2\in \mathfrak{L}_2 }1 \Bigr)\times \Bigl( \sideset{}{'}\sum\sum_{\lambda_2\in \mathfrak{L}_2 }\bigl|\sum_{\lambda_1\in \mathfrak{L}_1(\lambda_2)} e(\xi[\lambda_1^2 |b_1|^2+2 \lambda_1 \lambda_2 b_1*b_2]) \bigr|^2\Bigr)\\
=&\Bigl( \sideset{}{'}\sum\sum_{\lambda_2\in \mathfrak{L}_2 }1 \Bigr) \times \mathcal{G}_1(B,D,L,M),
\end{align*}
say. To estimate the first part of the above, we note that
\begin{align*}
|\mathfrak{L}_2 |&\ll \frac{\sqrt{M}}{|b_2|}\\
&\ll \frac{\sqrt{M}|b_1|}{\Delta}\\
&\ll \frac{\sqrt{M}\mathcal{L}_3}{D}.
\end{align*}
We get
\begin{align*}
\sideset{}{'}\sum\sum_{\lambda_2\in \mathfrak{L}_2 }1 &= \sum_{\substack{|l_j|^2\sim L\\ |l_1|^2\neq |l_2|^2 \\ } }^* \sum_{\substack{ d_j\sim D\\ \substack{(d_j,\Im(l_1\overline{l_2}))\leq \mathcal{L}_3\\(d_j,\Im(l_1l_2))\leq \mathcal{L}_3}}}  \sum_{\substack{\mathfrak{B}(l_j,d_j)=(b_1,b_2)\\ \substack{|b_1|^2\sim B\\b_1*l_j\neq 0 }}}\sum_{\lambda_2\in \mathcal{L}_2 }1 \\ &\ll \frac{\sqrt{M}\mathcal{L}_3}{D}\sum_{\substack{|l_j|^2\sim L\\ |l_1|^2\neq |l_2|^2 \\ } }^*\sum_{|b_1|^2\sim B}\tau(|b_1*l_1|)\tau(|b_1*l_2|)\\
&\ll \sqrt{M}L^2 D \mathcal{L}_3 (\log x)^{11}
\end{align*}
and so have to show
\begin{align*}
\mathcal{G}_1(B,D,L,M)\ll \frac{M^{3/2}L^2}{D\mathcal{L}_3 \mathcal{L}_1^8 (\log x)^{21}}
\end{align*}
in the same ranges as before. Expanding the square in the definition of $\mathcal{G}_1$ and denoting the new variables by $\lambda_{1,k}$ gives us
\begin{align*}
&\mathcal{G}_1(B,D,L,M)\\
=&\sideset{}{'}\sum\sum_{\lambda_2\in \mathfrak{L}_2 } \sum_{\lambda_{1,k}\in \mathfrak{L}_1(\lambda_2)} e(\xi[(\lambda_{1,1}^2-\lambda_{1,2}^2)|b_1|^2+2(\lambda_{1,1}-\lambda_{1,2})\lambda_2b_1*b_2]).
\end{align*}
We write $\lambda_{1,2}=\lambda_{1,1}+h$ and want to sum with cancellation over $\lambda_{1,1}$, so we need a notation for the resulting summation ranges. Let
\begin{align*}
\mathcal{H}(\lambda_2)&=\{h: \exists \lambda_{1,1}\in \mathfrak{L}_1(\lambda_2) \text{ s.t. } \lambda_{1,1}+h\in \mathfrak{L}_1(\lambda_2) \}\\
\mathcal{I}(\lambda_2,h)&=\{\lambda_{1,1}\in \mathfrak{L}_1(\lambda_2) :\lambda_{1,1}+h\in \mathfrak{L}_1(\lambda_2) \}
\end{align*}
and note
\begin{align}
|\mathcal{H}(\lambda_2)|&\ll  |\mathfrak{L}_1(\lambda_2)|\nonumber \\
|\mathcal{I}(\lambda_2,h)|&\ll |\mathfrak{L}_1(\lambda_2)|\label{I}.
\end{align}
Using this notation we have
\begin{align*}
\mathcal{G}_1(B,D,L,M)&=\sideset{}{'}\sum\sum_{\lambda_2\in \mathcal{L}_2 } \sum_{\substack{h\in \mathcal{H}(\lambda_2)\\\lambda_{1,1}\in \mathcal{I}(\lambda_2,h)}}e(\xi[|b_1|^2(2h\lambda_{1,1}-h^2)+2h\lambda_{2,1}*b_2])\\
&\leq \sideset{}{'}\sum\sum_{\substack{\lambda_2\in \mathcal{L}_2 \\ h\in \mathcal{H}(\lambda_2)}}\bigl|\sum_{\lambda_{1,1}\in \mathcal{I}(\lambda_2,h)}e(2 \xi |b_1|^2 h \lambda_{1,1}) \bigr|. 
\end{align*}
By \eqref{L1}, \eqref{L3}, and \eqref{I} we have
\begin{align*}
|\mathcal{I}(\lambda_2,h)|\ll \frac{\sqrt{M}}{\sqrt{B}}
\end{align*}
and the right hand side of above is always greater than $1$ in the ranges we have to consider. An application of the usual geometric series bound consequently gives us
\begin{align*}
\mathcal{G}_1(B,D,L,M)\ll\sideset{}{'}\sum\sum_{\substack{\lambda_2\in \mathfrak{L}_2 \\ h\in \mathcal{H}(\lambda_2)}}\min \{\frac{\sqrt{M}}{\sqrt{B}},|| 2\xi |b_1|^2 h||^{-1} \}.
\end{align*}
For all $\lambda_2$ we have
\begin{align*}
\mathcal{H}(\lambda_2)\subset [-\sqrt{M/B},\sqrt{M/B}]
\end{align*} 
and by \eqref{discr}, \eqref{sucmini}, and \eqref{L2} further can bound the number of elements in $\mathfrak{L}_2$ by
\begin{align*}
|\mathfrak{L}_2|&\ll \frac{(d_1,d_2,\Im(l_1\overline{l_2}))\sqrt{M}\sqrt{B}}{d_1d_2}\\
&\ll  \frac{\mathcal{L}_3\sqrt{M}\sqrt{B}}{D^2},
\end{align*}
where the right hand side of above is always greater than $1$ for sufficiently large $x$ in the ranges we need to consider. Plugging in these considerations gives us
\begin{align*}
&\mathcal{G}_1(B,D,L,M)\\
\ll& \frac{\mathcal{L}_3\sqrt{M}\sqrt{B}}{D^2} \sideset{}{'}\sum
\sum_{\substack{ \\ |h|\leq \sqrt{M/B}}}\min \{\frac{\sqrt{M}}{\sqrt{B}},|| 2\xi |b_1|^2 h||^{-1} \} \\
=& \frac{\mathcal{L}_3\sqrt{M}\sqrt{B}}{D^2}\sum_{\substack{|l_j|^2\sim L\\ |l_1|^2\neq |l_2|^2 \\ } }^* \sum_{\substack{ d_j\sim D\\ \substack{(d_j,\Im(l_1\overline{l_2}))\leq \mathcal{L}_3\\(d_j,\Im(l_1l_2))\leq \mathcal{L}_3}}}\!\! \sum_{\substack{\mathfrak{B}(l_j,d_j)=(b_1,b_2)\\ \substack{|b_1|^2\sim B\\b_1*l_j\neq 0 }}}\sum_{\substack{ \\ |h|\leq \sqrt{M/B}}}\!\! \min \{\frac{\sqrt{M}}{\sqrt{B}},|| 2\xi |b_1|^2 h||^{-1} \}.
\end{align*}
There is no longer any dependence on $b_2$ and since we choose only one basis for each lattice we can discard the summation over it. With this we are ready for the last step.

\subsubsection*{Step 3 - Back to Rational Integers}
We interchange the order of summation and relax the condition on $b_1$ to be any element of $\Gamma(l_1,d_1,l_2,d_2)$. This shows
\begin{align*}
&\mathcal{G}_1(B,D,L,M)\\
\ll& \frac{\mathcal{L}_3\sqrt{M}\sqrt{B}}{D^2}\sum_{\substack{|l_j|^2\sim L\\ |l_1|^2\neq |l_2|^2 \\ } }^*\sum_{\substack{\\ \substack{|b_1|^2\sim B\\b_1*l_j\neq 0 }}}\sum_{\substack{ \\ |h|\leq \sqrt{M/B}}}\!\!\min \{\frac{\sqrt{M}}{\sqrt{B}},|| 2\xi |b_1|^2 h||^{-1} \}\sum_{\substack{d_j\sim D\\d_j|b_1*l_j}}1\\
\leq& \frac{\mathcal{L}_3\sqrt{M}\sqrt{B}}{D^2}\sum_{\substack{|l_j|^2\sim L\\ |l_1|^2\neq |l_2|^2 \\ } }^*\sum_{\substack{\\ \substack{|b_1|^2\sim B\\b_1*l_j\neq 0 }}}\sum_{\substack{ \\ |h|\leq \sqrt{M/B}}}\!\! \min \{\frac{\sqrt{M}}{\sqrt{B}},|| 2\xi |b_1|^2 h||^{-1} \}\tau(|b_1*l_1|)\tau(|b_1*l_2|)\\
=&\frac{\mathcal{L}_3\sqrt{M}\sqrt{B}}{D^2}\mathcal{G}_2(B,L,M),
\end{align*}
say. Since 
\begin{align*}
B\leq D^2,
\end{align*}
it suffices to show
\begin{align*}
\mathcal{G}_2(B,L,M)\ll \frac{ML^2}{\mathcal{L}_3^2\mathcal{L}_1^8(\log x)^{21}}.
\end{align*}

The remaining steps are to transfer $\mathcal{G}_2$ back into a sum over integers and deal with the appearing multiplicities. Our tool of choice for this is Cauchy's inequality, which we now apply multiple times. We start with
\begin{align*}
\bigl(\mathcal{G}_2(B,L,M)\bigl)^2\ll& \Bigl(\frac{\sqrt{M}}{\sqrt{B}}\sum_{\substack{|l_j|^2\sim L\\ |l_1|^2\neq |l_2|^2 \\ } }^*\sum_{\substack{\\ \substack{|b_1|^2\sim B\\b_1*l_j\neq 0 }}}\sum_{\substack{ \\ |h|\leq \sqrt{M/B}}}\tau(|b_1*l_1|)^2\tau(|b_1*l_2|)^2 \Bigr)\\
&\times \Bigl(\sum_{\substack{|l_j|^2\sim L\\ |l_1|^2\neq |l_2|^2 \\ } }^*\sum_{\substack{\\ \substack{|b_1|^2\sim B\\b_1*l_j\neq 0 }}}\sum_{\substack{ \\ |h|\leq \sqrt{M/B}}} \min \{\frac{\sqrt{M}}{\sqrt{B}},|| 2\xi |b_1|^2 h||^{-1} \}\Bigr)\\
=&\Bigl(\frac{\sqrt{M}}{\sqrt{B}}\sum_{\substack{|l_j|^2\sim L\\ |l_1|^2\neq |l_2|^2 \\ } }^*\sum_{\substack{\\ \substack{|b_1|^2\sim B\\b_1*l_j\neq 0 }}}\sum_{\substack{ \\ |h|\leq \sqrt{M/B}}}\tau(|b_1*l_1|)^2\tau(|b_1*l_2|)^2 \Bigr)\\
&\times \mathcal{G}_3(B,L,M),
\end{align*}
say. We use Lemma \ref{X} to estimate
\begin{align*}
\frac{\sqrt{M}}{\sqrt{B}}\sum_{\substack{|l_j|^2\sim L\\ |l_1|^2\neq |l_2|^2 \\ } }^*\sum_{\substack{\\ \substack{|b_1|^2\sim B\\b_1*l_j\neq 0 }}}\sum_{\substack{ \\ |h|\leq \sqrt{M/B}}}\tau(|b_1*l_1|)^2\tau(|b_1*l_2|)^2\ll M L^2 (\log x)^{131}
\end{align*}
and have thus to show
\begin{align*}
\mathcal{G}_3(B,L,M)\ll \frac{ML^2}{\mathcal{L}_3^4 \mathcal{L}_1^{16} (\log x)^{173}}.
\end{align*}
We recall $\xi=(|l_1|^2-|l_2|^2)\gamma/W$ and write
\begin{align*}
u(j)=\sum_{\substack{\mathfrak{B}(l_j,d_j)=(b_1,b_2)\\ \substack{|b_1|^2\sim B\\b_1*l_j\neq 0 }}}\sum_{\substack{ \\ |h|\leq \sqrt{M/B}}}1_{(j=[|l_1|^2-|l_2|^2]|b_1|^2 h)}
\end{align*}
so that
\begin{align*}
\mathcal{G}_3(B,L,M)&=\sum_{j\leq 4L \sqrt{MB}}u(j) \min\{\frac{\sqrt{M}}{\sqrt{B}},||2\gamma j||^{-1}\}\\
&\leq \bigl(\sum_{j\leq 4L \sqrt{MB}}u(j)^2 \bigr)^{1/2} \bigl( \frac{\sqrt{M}}{\sqrt{B}}\sum_{j\leq 4L \sqrt{MB}} \min\{\frac{\sqrt{M}}{\sqrt{B}},||2\gamma/W j||^{-1}  \bigr)^{1/2}
\end{align*}
To estimate the first sum, we crudely bound the number of representations by the divisor function and get
\begin{align*}
u(j)&\leq \sum_{\substack{t_j\sim L \\ t_3 \sim B}}\sum_{|h|\leq \sqrt{M/B}}\tau(t_1)\tau(t_2)\tau(t_3) 1_{j=(t_1-t_2)t_3h}\\
&\leq \sum_{\substack{t'\leq 2L\\ t_3 \sim B}}\sum_{|h|\leq \sqrt{M/B}}1_{j=t't_3h} \tau(t_3) \sum_{t_2\sim L}\tau(t'+t_2)\tau(t_2).
\end{align*}
After another application of Cauchy's inequality, the sum over $t_2$ can be estimated uniformly in $t'$ by $L (\log x)^3$ and so
\begin{align*}
u(j)&\ll L(\log x)^3\sum_{\substack{t'\leq 2L\\ t_3 \sim B}}\sum_{|h|\leq \sqrt{M/B}}1_{j=t't_3h}\tau(t_3) \\
&\leq L (\log x)^3  \tau_4(j).
\end{align*}
Thus
\begin{align*}
\sum_{j\leq 4L\sqrt{MB}} u(j)^2 &\ll L^2 (\log x)^3 \sum_{4L\sqrt{M/B}}\tau_4(j)^2 \\
&\ll L^3 \sqrt{MB} (\log x)^{18}.
\end{align*}
and to complete the proof we need to show
\begin{align}\label{eq_minbound}
\sum_{j\leq 4L \sqrt{MB}} \min\{\frac{\sqrt{M}}{\sqrt{B}},||2\gamma/W j||^{-1}\}
 &\ll \frac{LM}{\mathcal{L}_3^8 \mathcal{L}_1^{32}(\log x)^{364}}.
\end{align}
By Dirichlet's approximation theorem, if $\gamma\in \mathfrak{m}$ then we can find a rational approximation 
\begin{align*}
|\gamma-\frac{a}{q}|\leq q^{-2}
\end{align*}
for some 
\begin{align}\label{qrangeAM}
(\log x)^{A_\mathfrak{M}}\leq q \leq x (\log x)^{-A_\mathfrak{M}}
\end{align}
and $(a,q)=1$. So for some $a', q'$ with $(a',q')=1$ and $q\leq q' \leq qW$ we have
\begin{align*}
|\frac{\gamma}{W}-\frac{a'}{q'}|\leq \frac{W}{q'^2}.
\end{align*}
After bounding the contribution of $j=0$ by $\sqrt{M}/\sqrt{B}$, an application of Lemma \ref{minsumW} gives us 
\begin{align}\label{estimate}
\sum_{j\leq 4L \sqrt{MB}} \min\{\frac{\sqrt{M}}{\sqrt{B}},||2\gamma/W j||^{-1}\}&\ll W^2\bigl(\frac{LM}{q}+\frac{\sqrt{M}}{\sqrt{B}}+L\sqrt{MB}+q \bigr)\log q.
\end{align}
The term $\sqrt{M}/\sqrt{B}$ is negligible. By our range conditions we have
\begin{align*}
B&\leq D^2 \ll x^{2/3-10^{-8}},\\
M&>\frac{x}{\mathcal{L}_1 L}\geq \frac{x^{2/3}}{\mathcal{L}_1},
\end{align*}
so that the term 
\begin{align*}
L\sqrt{MB}
\end{align*}
is small enough for \eqref{eq_minbound}. We recall
\begin{align*}
\mathcal{L}_1&=(\log x)^{29}\\
\mathcal{L}_3&=(\log x)^{504}\\
W&\ll \log x
\end{align*}
so the above estimate \eqref{estimate} is sufficient for \eqref{eq_minbound}, if
\begin{align*}
(\log x)^{5327}\leq q\leq x (\log x)^{-5327},
\end{align*}
which is fulfilled by the range in \eqref{qrangeAM}, since $A_\mathfrak{M}=10^4$. This completes the Type II estimate.
\end{proof}

\subsection{Type I Sums and proof of Proposition \ref{prmajo}}
We now show minor arc Type I sum estimates. Together with the results of the previous subsections this means the conditions of Lemma \ref{decomp} are met and we can use it to deduce that Proposition \ref{prmajo} holds for $\gamma \in \mathfrak{m}$. Since some additional flexibility is convenient for the $L^q$ restriction estimate in the next section, we state the Type I result in dependence of a parameter $B$.
\begin{lemma}\label{typIminor}
Let $x$, $W$ and $b$ as in Theorem \ref{MT}, $B>0$, and $\omega$ be a complex valued weight with $|\omega(l)|\leq (\log l)^{c_1}\tau(l)^{c_2}$. Let further $|\gamma-a/q|\leq q^{-2}$ with $(a,q)=1$, 
\begin{align*}
(\log x)^B\leq q \leq x(\log x)^{-B},
\end{align*}
and assume that 
\begin{align}\label{D_Irange}
D_I\leq \sqrt{x}(\log x)^{-B}.
\end{align}
Then it holds that
\begin{align*}
\mathcal{R}^\omega(D_I):=\sum_{d\leq D_I}\bigl| \sum_{\substack{dn\leq x\\dn\equiv b(W)}}\sum_{dn=k^2+l^2}\omega(l)e( dn\gamma/W)\bigr|\ll x(\log x)^{c_1+2^{c_2}+5-B/8}.
\end{align*}
\end{lemma}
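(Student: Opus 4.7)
The plan is to reduce the inner exponential sum in $k$ to a quadratic exponential sum and bound it by Weyl's inequality. Using $|\omega(l)|\leq (\log x)^{c_1}\tau(l)^{c_2}$ and the triangle inequality on the $l$-sum,
\begin{align*}
\mathcal{R}^\omega(D_I) \leq (\log x)^{c_1}\sum_{d \leq D_I}\sum_{l\leq \sqrt{x}}\tau(l)^{c_2}|S_{d,l}|,\qquad S_{d,l}=\sum_{k}^{\star} e\bigl(\gamma(k^2+l^2)/d\bigr),
\end{align*}
where $\sum^{\star}$ ranges over $k$ with $k^2+l^2\leq x$, $d\mid k^2+l^2$, and $k^2+l^2\equiv b\pmod W$. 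The assumption $(b,W)=1$ together with the first two conditions forces $(d,W)=1$ in any nonempty term, so $k$ lies in at most $O(\tau(d)\varrho_W(l))$ residue classes modulo $dW$. In each class $k_0$ the substitution $k=k_0+jdW$ converts the inner sum into a quadratic exponential sum
\begin{align*}
e\bigl(\gamma(k_0^2+l^2)/d\bigr)\sum_{j} e\bigl(2\gamma k_0 W\, j+\gamma dW^2\, j^2\bigr)
\end{align*}
of length $J\asymp \sqrt{x-l^2}/(dW)$ with quadratic coefficient $\alpha_d:=\gamma dW^2$.

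Next I would apply Weyl's inequality. By Dirichlet's theorem, for every $d$ there exist $(a_d,q_d)=1$ with $1\leq q_d\leq J$ and $|\alpha_d-a_d/q_d|\leq 1/(q_d J)$, yielding
\begin{align*}
\Bigl|\sum_{j}e\bigl(\alpha_d j^2+\beta_d j\bigr)\Bigr|\ll J^{1+\epsilon}\bigl(q_d^{-1}+J^{-1}+q_d J^{-2}\bigr)^{1/2}.
\end{align*}
Since $D_I\leq\sqrt{x}(\log x)^{-B}$ and $W$ is very small, $J\gg(\log x)^{B-o(1)}$ uniformly in $d$; the constraint $q_d\leq J$ ensures that $q_d^{-1}$ dominates both $J^{-1}$ and $q_d/J^2$, so the inner sum is $\ll J q_d^{-1/2}(\log x)^{O(1)}$. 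When $q_d\geq(\log x)^{B/4}$ this is a saving of $(\log x)^{-B/8}$. Summing over the $O(\tau(d)\varrho_W(l))$ classes $k_0$, then using $\sum_{l\leq\sqrt{x}}\tau(l)^{c_2}\varrho_W(l)\ll\sqrt{x}(\log x)^{2^{c_2}-1+O(1)}$ and $\sum_{d\leq D_I}\tau(d)/d\ll(\log x)^2$, gives the desired bound $x(\log x)^{c_1+2^{c_2}+5-B/8}$ for the contribution of the \emph{good} $d$ (those with $q_d\geq(\log x)^{B/4}$).

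The hard part is handling the \emph{bad} $d$ with $q_d<(\log x)^{B/4}$. Combining $|\gamma-a/q|\leq q^{-2}$ with $|\alpha_d-a_d/q_d|\leq 1/(q_dJ)$ and reducing $adW^2/q$ to lowest terms, the hypothesis $q\geq(\log x)^B$ forces either $q\mid q_d dW^2$, in which case $\gcd(q,d)\gg(\log x)^{3B/4-O(1)}$, or $d\gg q/(q_dW^2)\gg(\log x)^{3B/4-O(1)}$. For these sparse $d$ the trivial estimate $|S_{d,l}|\ll\sqrt{x}\tau(d)\varrho_W(l)/d$ suffices, provided one can show that $\sum_{d\leq D_I,\,(d,q)\geq G}\tau(d)/d$ is at most $(\log x)^{-B/8+O(1)}$ for $G=(\log x)^{3B/4}$; this is the main technical obstacle, which I would resolve by a Rankin-type tail estimate exploiting $G^{-\epsilon}$ together with the fact that large divisors of $q$ are rare. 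Collecting the good and bad contributions then produces the claimed bound with the log exponent $c_1+2^{c_2}+5-B/8$, where the constant $5$ absorbs the various implicit factors from the divisor moments, the Weyl log, and the $k_0$-count.
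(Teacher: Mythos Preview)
Your reduction to quadratic Weyl sums with coefficient $\alpha_d=\gamma dW^2$ is natural, but the treatment of the ``bad'' $d$ has a genuine gap. Your dichotomy says that $q_d<(\log x)^{B/4}$ forces either $\gcd(q,d)\gg(\log x)^{3B/4}$ or $d\gg q/(q_dW^2)$. The second alternative is \emph{not} a sparsity condition: when $q$ is near the bottom of its allowed range, say $q\asymp(\log x)^B$, the condition $d\gg(\log x)^{3B/4}$ is satisfied by essentially every $d\leq D_I$ (recall $D_I$ can be as large as $\sqrt{x}(\log x)^{-B}$), so the trivial bound on $S_{d,l}$ summed over those $d$ gives nothing. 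The first alternative is also problematic: the proposed Rankin bound $\sum_{d:(d,q)\geq G}\tau(d)/d\ll(\log x)^{-B/8+O(1)}$ can fail, since for $q$ with many prime factors (e.g.\ $q$ a primorial, so $\omega(q)\sim\log x/\log\log x$) the sum $\sum_{g\mid q,\,g\geq G}\tau(g)/g$ is of super-polylogarithmic size and no fixed power of $\log x$ is saved. There is in addition a third case your dichotomy omits, namely $J\ll q$, which arises when $1/(q_dJ)$ dominates $dW^2/q^2$ in the comparison of the two rational approximations; for $q$ near $x(\log x)^{-B}$ this is not automatically harmless.

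The paper sidesteps all of this by applying Cauchy--Schwarz across $d$ and the residue classes $b_1\pmod d$, $a_1\pmod W$ \emph{before} estimating the quadratic exponential sum. After expanding the square one obtains $\sum_{d}\sum_{k_1\equiv k_2\pmod{[d,W]}}e\bigl(\gamma(k_1^2-k_2^2)\bigr)$; writing $k_2=k_1+k'$ with $[d,W]\mid k'$ linearises the phase in $k_1$, so the inner sum is a geometric series with frequency $2\gamma k'$. The $d$-sum is then absorbed into a factor $\tau(k')$ via $d\mid k'$, and one is left with $\sum_{k'}\tau(k')\min\{K,\|2\gamma k'\|^{-1}\}$, which is handled directly with the \emph{original} approximation $|\gamma-a/q|\leq q^{-2}$. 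This completely avoids the need to track rational approximations to $\gamma dW^2$ for each $d$ separately, which is exactly where your argument breaks down.
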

\begin{proof}
We start by rewriting
\begin{align*}
\mathcal{R}^\omega(D_I)=\sum_{d\leq D_I}\bigl|\sum_{\substack{a_1,a_2(W)\\ a_1^2+a_2^2\equiv b(W)}}\sum_{\substack{b_1,b_2(d)\\ b_1^2+b_2^2\equiv 0(d)}}\sum_{\substack{k\leq \sqrt{x}\\ \substack{ k\equiv b_1(d)\\ k\equiv a_1 (W) }}}e(k^2\gamma/W) \sum_{\substack{l\leq \sqrt{x-k^2}\\ \substack{ l\equiv b_2(d)\\ l\equiv a_2 (W) }}}\omega(l)e( l^2 \gamma/W)\bigr|.
\end{align*}
Similar to the Type II approach we dissect $\mathcal{R}^\omega(D_I)$ into short sums. Assume we are given $D$, $K$, $L$, and $\mathcal{L}$. We set
\begin{align*}
D'&=2D\\
K'&=2K\\
L'&=e^{\mathcal{L}^{-1}}L
\end{align*}
and, as before, write $d \sim D$ for the range $D< d \leq D'$. With this notation we define the short sums
\begin{align*}
\mathcal{R}^\omega(D,K,L)=\sum_{d\sim D}\bigl|\sum_{\substack{a_1,a_2(W)\\ a_1a_2\equiv b(W)}}\sum_{\substack{b_1,b_2(d)\\ b_1^2+b_2^2\equiv 0(d)}}\sum_{\substack{k\sim K\\ \substack{ k\equiv b_1(d)\\ k\equiv a_1 (W) }}}e( k^2\gamma/W) \sum_{\substack{l\sim L \\ \substack{ l\equiv b_2(d)\\ l\equiv a_2 (W) }}}\omega(l)e(l^2 \gamma/W)\bigr|.
\end{align*}
The contribution of terms with 
\begin{align*}
k^2+l^2&\leq x \\
(le^{\mathcal{L}^{-1}})^2+(2k)^2&>x
\end{align*}
to $\mathcal{R}^\omega(D_I)$, that may not be covered exactly, can be bound trivially by
\begin{align}\label{TItrivial}
O(\frac{x(\log x)^{A_1}}{\mathcal{L}}),
\end{align}
where
\begin{align*}
A_1\leq c_1+2^{c_2}+5.
\end{align*}
We note for later that there are fewer than 
\begin{align}\label{T1sumnumber}
2\mathcal{L}(\log x)^2
\end{align}  short sums in $\mathcal{R}^\omega(D_I)$ and that it suffices to consider the range
\begin{align}
D&\leq D_I \label{Drange}\\
K&\leq \sqrt{x}\label{Krange}\\
L&\leq \sqrt{x}\label{Lrange}.
\end{align}
We proceed by applying the triangle inequality and estimate the sum over $l$ by Shiu's Theorem (see \cite{shiu}) to get
\begin{align*}
\mathcal{R}^\omega(D,K,L)\leq (\log x)^{A_2}\bigl(\frac{L }{D}+1\bigr)\sum_{d\sim D}\tau(d)\sum_{b_1(d)}\sum_{a_1(W)} \bigl|\sum_{\substack{k\sim K \\ \substack{ k\equiv b_1(d)\\ k\equiv a_1 (W) }}}e( k^2 \gamma/W)\bigr|,
\end{align*}
where
\begin{align*}
A_2=c_1+2^{c_2}-1.
\end{align*}
We use Cauchy's inequality and the bound $W\leq \log x$ to get
\begin{align*}
\Bigl(\sum_{d\sim D}\sum_{b_1(d)}\tau(d)\sum_{a_1(W)} \bigl|\sum_{\substack{k\sim K\\ \substack{ k\equiv b_1(d)\\ k\equiv a_1 (W) }}}e( k^2\gamma/W)\bigr|\Bigr)^2\leq D^2 (\log x)^4 \sum_{d\sim D} \sum_{\substack{b_1(d)\\ a_1(W)}}\bigl|\sum_{\substack{k\sim K \\ \substack{ k\equiv b_1(d)\\ k\equiv a_1 (W) }}}e(k^2 \gamma/W)\bigr|^2.
\end{align*}
Expanding the square with new variables $k_1$ and $k_2$ gives us
\begin{align*}
\sum_{d\sim D} \sum_{\substack{b_1(d)\\ a_1(W)}}\bigl|\sum_{\substack{k\sim K \\ \substack{ k\equiv b_1(d)\\ k\equiv a_1 (W) }}}e( k^2 \gamma/W)\bigr|^2&=\sum_{d\sim D} \sum_{\substack{b_1(d)\\ a_1(W)}} \sum_{\substack{k_j\sim K \\ \substack{ k_j\equiv b_1(d)\\ k\equiv a_1 (W) }}}e((k_1^2-k_2^2)\gamma/W)\\
&=\sum_{d\sim D}\sum_{\substack{k_j\sim K \\ k_1\equiv k_2 ([d,W])}}e( (k_1^2-k_2^2)\gamma/W).
\end{align*}
We write $k_2=k_1+k'$ and note that the congruence condition is then equivalent to $[d,W]|k'$. We set 
\begin{align*}
K(k')=\{k_1\sim K : k_1+k'\sim K \}
\end{align*} 
and get by separating the diagonal contribution of terms with $k'=0$ and the geometric series bound
\begin{align*}
\sum_{d\sim D}\sum_{\substack{k_j\sim K \\ k_1\equiv k_2 ([d,W])}}e(\gamma (k_1^2-k_2^2))&\leq \sum_{d\sim D}\sum_{\substack{|k'|\leq 4 K\\ [d,W]|k'}} \bigl|\sum_{k_1\in K(k')}e(2 k_1 k' \gamma/W)  \bigr|\\
&\ll \sum_{d\sim D}\sum_{\substack{0<k'\leq 4 K\\ [d,W]|k'}} \bigl|\sum_{k_1\in K(k')}e(2 k_1 k' \gamma/W)  \bigr|+DK\\
&\ll \sum_{d\sim D} \sum_{\substack{0<k'\leq 4 K\\ [d,W]|k'}}\min\{K,||2 k'\gamma/W||^{-1} \}+DK.
\end{align*}
We now use the assumed rational approximation $|\gamma-a/q|\leq q^{-2}$ for some coprime $a$ and $q$. By relaxing $[d,W]|k'$ to $d|k'$ and applying Lemma \ref{minsumW} just like in \eqref{estimate}, we estimate
\begin{align*}
\sum_{d\sim D} \sum_{\substack{0<k'\leq 4 K\\ [d,W]|k'}}\min\{K,||2k'\gamma/W||^{-1} \}&\ll \sum_{0< k' \leq 4K}\tau(k')\min\{K,||2k'\gamma/W||^{-1} \} \\
&\ll W^2 \bigl(K^{3/2}+K^2 q^{-1/2}+q^{1/2}K)(\log x)^3.
\end{align*}
Combining this with $W \leq \log x$ and the range conditions \eqref{Drange}, \eqref{Krange}, \eqref{Lrange}, we get
\begin{align*}
R^\omega(D,K,l)\ll (\log x)^{A_2+6} \bigl(x^{7/8}+xq^{-1/4}+q^{1/4}x^{3/4}+D_I^{1/2}x^{3/4}\Bigr)
\end{align*}
We now choose $\mathcal{L}=(\log x)^{B/8}$. By  \eqref{D_Irange}, \eqref{TItrivial}, and \eqref{T1sumnumber} get
\begin{align*}
R^\omega(D)\ll x(\log x)^{A_2+6-B/8},
\end{align*}
as stated.
\end{proof}
We conclude that our majorant behaves indeed pseudorandomly.
\begin{proof}[Proof of Proposition \ref{prmajo}]
By Lemma \ref{prmajomajor} the Theorem holds for $\gamma\in \mathfrak{M}$. We want to use Lemma \ref{decomp} to deal with the remaining $\gamma\in \mathfrak{m}$. By Lemma \ref{T2boundl} the required Type II bounds holds. 
For the Type I estimate we recall the bounds \eqref{o1bound}, \eqref{o2bound}, and \eqref{o3bound} that give finite values for $c_1$ and $c_2$ in the application of Lemma \ref{typIminor}. Since $A_\mathfrak{M}$ is large enough and $D_I\ll_A x^{1/2}(\log x)^{-A}$ for any fixed $A>0$, we use Lemma \ref{typIminor} to get the Type I bound required for Lemma \ref{decomp}.
\end{proof}

\section{$L^q$ Estimate.}\label{sec6}
We now prove that condition II. of Theorem \ref{MT} holds. Starting with Green's work the $L^q$ restriction theory is a central element of the transference principle. In many interesting cases it can be proved using the work of Green and Tao on the Selberg sieve \cite{gt} or the similar results of Ramare and Rusza \cite{rr}, see for example \cite{mms} or \cite{te}. However, this seems not to be applicable in our case and we instead use an older idea of Bourgain \cite{bou}. That idea was applied also in \cite{har}, \cite{bp}, and \cite{cho}. We roughly follow  the proofs in \cite{har} and \cite{bp}. The main ingredient is the following idea from Bourgain that is also stated on p. 16 of \cite{har}.
\begin{lemma}\label{Bou}
Let $X$ be large, $Q\geq 1$ and $1/X \leq \Delta \leq 1/2$. Let further $\epsilon>0$ and $A>0$ be arbitrary. Define
\begin{align*}
G(\theta):=\sum_{q\leq Q}\frac{1}{q}\sum_{\substack{a(q)\\||\theta-a/q||\leq \Delta}}\frac{1}{1+X||\theta-a/q||}.
\end{align*}
Then for $1/X$ well spaced points $\theta_1,\ldots,\theta_R$ we have
\begin{align*}
\sum_{1\leq r,r'\leq R}G(\theta_r-\theta_{r'})\ll_{\epsilon,A}RQ^\epsilon \log(1+\Delta X)+\frac{R^2Q \log(1+\Delta X)}{X}+\frac{R^2\log(1+\Delta X)}{Q^A}.
\end{align*}
\end{lemma}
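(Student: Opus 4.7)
The approach is to Fourier-analyze $G$ via Poisson summation and reduce the estimate to a bilinear exponential-sum problem in the $\theta_r$. Setting $K_X(x):=\mathbf{1}[\|x\|\le\Delta]/(1+X\|x\|)$ viewed on $\mathbb R/\mathbb Z$, Poisson summation over $a\pmod q$ gives $\sum_{a(q)}K_X(\theta-a/q)=q\sum_{m\in\mathbb Z}\hat K_X(qm)e(qm\theta)$, whence
\[G(\theta)=\sum_{n\in\mathbb Z}\tau_Q(n)\,\hat K_X(n)\,e(n\theta),\qquad \tau_Q(n):=\#\{q\le Q:q\mid n\},\]
with the convention $\tau_Q(0)=Q$. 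Writing $S(y):=\sum_{r=1}^R e(y\theta_r)$ and inserting the expansion yields the fundamental identity
\[\sum_{1\le r,r'\le R}G(\theta_r-\theta_{r'})=\sum_{n\in\mathbb Z}\tau_Q(n)\hat K_X(n)|S(n)|^2.\]
Routine integration by parts (using the single kink of $K_X$ at $0$ and the jumps at $\pm\Delta$) gives $\hat K_X(0)=2\log(1+X\Delta)/X$ and $|\hat K_X(n)|\ll\min(\log(1+X\Delta)/X,\,1/|n|)$ for $n\ne 0$.

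The $n=0$ term contributes exactly $QR^2\hat K_X(0)=2R^2Q\log(1+X\Delta)/X$, producing the middle term of the stated bound. For $n\ne 0$, I would split $|S(n)|^2=R+(|S(n)|^2-R)$. The \emph{diagonal} part $R\sum_{n\ne 0}\tau_Q(n)\hat K_X(n)$ is evaluated by dual Poisson $\sum_m\hat K_X(qm)=q^{-1}\sum_{a\in\mathbb Z}K_X(a/q)$: isolating $a=0$ recovers the clean harmonic sum $R\sum_{q\le Q}q^{-1}\ll R\log Q\le RQ^\epsilon\log(1+X\Delta)$, while the $a\ne 0$ part together with the subtraction $-Q\hat K_X(0)$ unfolds into a manageable double sum of $1/(q(1+X|a|/q))$ over $q\le Q$, $1\le |a|\le q\Delta$, whose total is $O(RQ\log(1+X\Delta)/X)$; both are absorbed into the first two terms of the claimed bound.

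The \emph{off-diagonal} piece $\mathcal E:=\sum_{n\ne 0}\tau_Q(n)\hat K_X(n)(|S(n)|^2-R)$ is the heart of the argument. Dyadically decomposing in $|n|$, I would combine the Fej\'er-kernel mean-value estimate $\sum_{|n|\le N}|S(n)|^2\ll R(N+X^2/N)$, which follows for $1/X$-well-spaced $\theta_r$ by expanding $\sum_{|n|\le N}e(n(\theta_r-\theta_{r'}))$ as a Dirichlet kernel and using the spacing to bound $\sum_{r\ne r'}1/\|\theta_r-\theta_{r'}\|^2$, with the pointwise bound $|\hat K_X(n)|\ll\log(1+X\Delta)/X$ and the trivial $\tau_Q(n)\le Q$; this controls the bulk range $|n|\lesssim X$ by a contribution already absorbed into $R^2Q\log(1+X\Delta)/X$. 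The main obstacle is the tail $|n|\gg XQ^A$: there the decay $|\hat K_X(n)|\ll 1/|n|$ is just barely non-summable against $\tau_Q(n)\le Q$. I would overcome this by replacing $K_X$ by a smooth nonnegative majorant $\tilde K_X$ at scale $\Delta Q^{-A}$ (mollifying the kink at $0$ and the jump at $\pm\Delta$ on a scale $\Delta/Q^A$), whose Fourier coefficients decay faster than any fixed power of $|n|$ beyond $XQ^A$; the mollification excess of $\tilde K_X$ over $K_X$ is bounded by a direct well-spacedness count, and the resulting tail contributes at most $O(R^2\log(1+X\Delta)/Q^A)$, giving the third term. Balancing the smoothing scale against the large-sieve mean value across the dyadic ranges is where the technical care is concentrated.
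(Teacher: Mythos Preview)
The paper does not prove this lemma; it quotes it from Bourgain (via Harper's exposition) and uses it as a black box. Your Fourier-analytic setup is correct: the expansion $G(\theta)=\sum_n\tau_Q(n)\hat K_X(n)e(n\theta)$, the identification of the $n=0$ term with the middle term of the bound, and the diagonal computation all go through as you describe.

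The gap is in your off-diagonal estimate for the bulk range $|n|\lesssim X$. You assert this contributes $O(R^2Q\log(1+X\Delta)/X)$, but your stated inputs yield only $O(RQ\log(1+X\Delta))$: taking absolute values and using $\tau_Q(n)\le Q$ and $|\hat K_X(n)|\ll\log(1+X\Delta)/X$ gives
\[
\frac{Q\log(1+X\Delta)}{X}\sum_{0<|n|\le X}\bigl||S(n)|^2-R\bigr|\ll\frac{Q\log(1+X\Delta)}{X}\cdot RX=RQ\log(1+X\Delta),
\]
since both $\sum_{|n|\le X}|S(n)|^2$ and $\sum_{|n|\le X}R$ are of order $RX$. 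This exceeds the target first term $RQ^\epsilon\log(1+X\Delta)$ by a full factor of $Q^{1-\epsilon}$, and the two bounds coincide only when $R\asymp X$. (Minor point: the Dirichlet kernel decays like $\min(N,1/\|\phi\|)$, so the relevant spacing sum would be $\sum_{r\ne r'}1/\|\theta_r-\theta_{r'}\|$; the $1/\|\phi\|^2$ decay you invoke belongs to the Fej\'er kernel.) The underlying problem is that $\hat K_X(n)$ is not nonnegative, so passing to absolute values destroys essential cancellation. Bourgain's argument fixes this by majorizing $K_X$ \emph{from the outset} by a sum of dilated Fej\'er kernels $X^{-1}\sum_{0\le j\ll\log(X\Delta)}F_{X/2^j}$, whose Fourier coefficients are nonnegative and supported on $|n|\le X/2^j$; positivity then allows one to bound $\tau_Q(n)$ by a divisor bound on the compact support and to evaluate $\sum_n\hat F_M(n)|S(n)|^2=\sum_{r,r'}F_M(\theta_r-\theta_{r'})$ directly via the $1/X$-spacing. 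Your tail-smoothing idea is in the right spirit, but the positivity has to be built in globally, not only for large $|n|$.
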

To apply this result we require a majorant for the Fouvry-Iwaniec primes for which we have strong minor arc bounds. This is necessary because we need to overcome the logarithmic loss of the $L^2$ bound. Since Lemma \ref{decomp} only saves a factor of $\log \log x $ over the trivial estimate, $\Lambda^+$ is not suitable for this application. We instead use a more simple approach that is similar to $\Lambda_3$.

Let \begin{align*}
z_0&=e^{(\log x)^{1/3}}\\
D_0&=e^{(\log x)^{2/3}}\\
z&=x^{1/4}\\
D_1&=x^{1/3}
\end{align*}
and
\begin{align*}
\theta^+(n,x)=\theta^+(n,D_1,D_0,z,z_0)
\end{align*}
as defined in \eqref{mainsieve}. Then we have
\begin{align*}
\Lambda(n)\leq (\log x)( \theta^+(n,x)+E'(n)),
\end{align*}
where $E'(n)$ accounts for primes less than $z$ and their powers. We note
\begin{align}\label{E'}
\sum_{n\leq x}|E'(n)|\sum_{n=y^2+l^2}\Lambda(l)\ll x^{1/3}.
\end{align}
The majorant we use for the $L^q$ estimate is
\begin{align}\label{nudef}
\nu(n,x)=(\theta^+(n,x)+E'(n))\log x\sum_{n=l^2+k^2}\Lambda(l).
\end{align}
For any complex valued function $f$ and integers $W$, $b$ recall the notation
\begin{align*}
\hat{f}_{W,b}(\gamma)=\sum_{n\leq N}f_{W,b}(n)e(\gamma n).
\end{align*}
We show the following result that implies condition II. of Theorem \ref{MT}.  
\begin{lemma}\label{Lq}
Assume we are given $x$, $N$, $W$, and $b$ as in Theorem \ref{MT}. Let $\nu(n,x)$ as defined in \eqref{nudef} and $2<\mathfrak{q}<3$. There exists $C_\mathfrak{q}$ such that the following holds. For any $\phi: \mathbbm{Z}\to \mathbbm{C}$ with $|\phi|\leq |\nu|$ we have
\begin{align*}
\int_0^1 \bigl|\hat{\phi}_{W,b}(\gamma) d\gamma\bigr|^\mathfrak{q}\leq C_\mathfrak{q} N^{\mathfrak{q}-1}.
\end{align*}
\end{lemma}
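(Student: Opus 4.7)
The plan is to apply Bourgain's $L^\mathfrak{q}$ restriction machinery \cite{bou} for sparse majorants, as developed further in \cite{har} and \cite{bp}. By the layer-cake identity
\begin{equation*}
\int_0^1 |\hat{\phi}_{W,b}(\gamma)|^\mathfrak{q}\,d\gamma=\mathfrak{q} N^\mathfrak{q}\int_0^\infty\alpha^{\mathfrak{q}-1}|E_\alpha|\,d\alpha,\qquad E_\alpha:=\{\gamma\in[0,1]:|\hat{\phi}_{W,b}(\gamma)|\geq \alpha N\},
\end{equation*}
it suffices to prove $|E_\alpha|\ll\alpha^{-(2+\epsilon)}/N$ for every $\alpha$ above a suitable $\log N$-power threshold $\alpha_0$; the contribution of $\alpha<\alpha_0$ is absorbed via the Plancherel bound $|E_\alpha|\ll\alpha^{-2}\|\nu_{W,b}\|_2^2/N^2$ combined with $\|\nu_{W,b}\|_2^2\ll N(\log N)^{O(1)}$, which in turn follows from the divisor bound and Corollary \ref{cor}.

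The first step is to obtain a pointwise Fourier majorant for $|\hat{\nu}_{W,b}|$. Since $\nu$ is the product of an outer sieve $\theta^+$ of level $D_1D_0\ll x^{1/3+o(1)}$ and a weighted inner von Mangoldt factor, both arcs of Section \ref{sec4} are accessible. On the major arcs, Lemma \ref{marc2} (whose hypothesis follows from Theorem \ref{GFI} and Corollary \ref{cor} applied to $\nu$) gives for $\gamma=a/q+\beta$ with $q\leq(\log x)^{A_\mathfrak{M}}$ the pointwise bound
\begin{equation*}
|\hat{\nu}_{W,b}(\gamma)|\ll \frac{\tau(q)}{\varphi(q)}\cdot\frac{N}{1+N|\beta|}+o(N),
\end{equation*}
after using $|\epsilon(a,q,W,b)|\ll\tau(q)$ and $|S_0(\beta,N)|\ll\min(N,|\beta|^{-1})$. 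On the complementary minor arcs, opening $\theta^+$ expresses $\hat{\nu}_{W,b}(\gamma)$ as a Type I sum with linear dimension $D_I=D_1D_0\leq\sqrt x(\log x)^{-B}$, so Lemma \ref{typIminor} gives $|\hat{\nu}_{W,b}(\gamma)|\ll N(\log N)^{-B}$ for any fixed $B$. These two regimes together produce the envelope $|\hat{\nu}_{W,b}(\gamma)|\ll NG(\gamma)+N(\log N)^{-B}$, where $G$ is the function of Lemma \ref{Bou}; the mismatch $\tau(q)/\varphi(q)$ against $1/q$ costs only a further $Q^\epsilon$ by $\tau(q)\ll_\epsilon q^\epsilon$.

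The level-set bound itself comes from the usual $TT^*$ duality step. Let $\theta_1,\dots,\theta_R\in E_\alpha$ be a maximal $N^{-1}$-separated family, so that $|E_\alpha|\ll R/N$, and choose unimodular $a_r$ with $a_r\hat{\phi}_{W,b}(\theta_r)=|\hat{\phi}_{W,b}(\theta_r)|$. Then
\begin{equation*}
R\alpha N\leq\sum_r a_r\hat{\phi}_{W,b}(\theta_r)=\sum_n\phi_{W,b}(n)g(n),\qquad g(n):=\sum_r a_re(\theta_r n),
\end{equation*}
and Cauchy--Schwarz combined with $|\phi|\leq\nu$ yields
\begin{equation*}
(R\alpha N)^2\leq\|\nu_{W,b}\|_1\sum_{r,r'}a_r\bar{a}_{r'}\hat{\nu}_{W,b}(\theta_r-\theta_{r'}).
\end{equation*}
Inserting the pointwise envelope and applying Lemma \ref{Bou} with $X=N$, $Q\asymp\alpha^{-1}$ and $\Delta=1/2$, the right-hand side is bounded by $\|\nu_{W,b}\|_1\cdot NRQ^\epsilon\log N$ plus acceptable error terms; using $\|\nu_{W,b}\|_1\ll N$ (Corollary \ref{cor}) and solving yields $R\ll\alpha^{-2-\epsilon}$, hence $|E_\alpha|\ll\alpha^{-(2+\epsilon)}/N$. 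Since $\epsilon$ can be taken arbitrarily small and $\mathfrak{q}>2$, this completes the argument.

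The main technical hurdle is balancing the three error terms from Lemma \ref{Bou} together with the minor arc remainder $R^2N^2(\log N)^{-B}$: the choice $Q\asymp\alpha^{-1}$ controls the $R^2Q\log N/N$ term, large $A$ in Lemma \ref{Bou} kills the $R^2\log N/Q^A$ term, and large $B$ in Lemma \ref{typIminor} kills the minor arc contribution. Because $A$ and $B$ can each be chosen arbitrarily large at no cost beyond implied constants, this balancing is feasible; the only essential loss is the $Q^\epsilon$ factor from Lemma \ref{Bou}, which prevents the endpoint $\mathfrak{q}=2$ but permits any fixed $\mathfrak{q}\in(2,3)$.
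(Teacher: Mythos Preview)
Your overall strategy matches the paper's proof exactly: layer-cake reduction to a level-set bound, Plancherel for small $\alpha$, major/minor arc envelope for $\hat\nu_{W,b}$ via Lemma~\ref{marc2} and Lemma~\ref{typIminor}, and then the $TT^*$ duality step feeding into Lemma~\ref{Bou}. All the ingredients are correctly identified.

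However, your choice of parameters in Lemma~\ref{Bou} does not work as stated. With $Q\asymp\alpha^{-1}$ the envelope $|\hat\nu_{W,b}|\ll NG+N(\log N)^{-B}$ is not valid: major arcs with modulus $q$ in the range $\alpha^{-1}<q\le(\log x)^{A_\mathfrak{q}}$ are not captured by $G$, and there the pointwise bound is only $\ll Nq^{-1+\epsilon_0}\ll N\alpha^{1-\epsilon_0}$. This produces an additional term $R^2\alpha^{1-\epsilon_0}$ on the right of the $TT^*$ inequality, which cannot be absorbed into $R^2\alpha^2$. Separately, taking $\Delta=1/2$ gives $\log(1+\Delta N)\sim\log N$, and then the third error term of Lemma~\ref{Bou} is $R^2(\log N)Q^{-A}=R^2\alpha^{A}\log N$; for $\alpha$ bounded away from $0$ this is of order $R^2\log N$, again not absorbable into $R^2\alpha^2$ for any fixed $A$. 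Your claim that ``large $A$ kills the $R^2\log N/Q^A$ term'' therefore fails precisely in the range $\alpha\asymp 1$.

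The paper repairs both points simultaneously by first splitting off from $\sum_{r,r'}|\hat\nu_{W,b}(\theta_r-\theta_{r'})|$ the contributions with $q>\delta^{-3}$ (bounded by $R^2N\delta^{3-\epsilon}$) and with $|\gamma-a/q|>\delta^{-3}/N$ (bounded by $R^2N\delta^{3}$), and only then applying Lemma~\ref{Bou} with $Q=\delta^{-3}$ and $\Delta=\delta^{-3}/N$. The crucial gain is that $\log(1+\Delta N)=\log(1+\delta^{-3})$ is now bounded by a constant times $\log(1/\delta)$, so the third term becomes $R^2\delta^{3A}\log(1+\delta^{-3})$, which is uniformly $\ll R^2\delta^2$ for $\delta\in(0,1]$ once $A\ge 1$. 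With these adjustments your sketch becomes the paper's proof; the remaining points (the $L^2$ bound, the claim $\|\nu_{W,b}\|_1\ll N$ up to harmless logarithms, and the passage from $\hat\nu_{W,b}$ to $\widehat{|\nu_{W,b}|}$) are minor and handled the same way.
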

\begin{proof}
By \eqref{E'} the contribution of $E'(n)$ is negligible. We start by considering the distribution of the majorant in arithmetic progressions. Using the approach for $i=3$ in subsection \ref{L3subse} we have for any fixed $A>0$ uniformly in $T\leq x$, $q\leq (\log x)^A$ and $a(q)$ that
\begin{align}\label{Lqmajorantcongruence}
\sum_{\substack{n\leq T\\ n\equiv a(q)}}\nu(n)=\frac{\Xi(q,a)}{\varphi(q)}C(x)\int_0^{\sqrt{T}}\sqrt{T-t^2}+O_A(x(\log x)^{-A})
\end{align}
for some function $C(x)\asymp 1$ that depends on the sieve. Recall that $\Xi(q,a)=0$ if $(a,q)\neq 1$. 

To apply Lemma \ref{Bou}, we require major and minor arc information for $\hat{\nu}_{W,b}(\gamma)$. Let $A_\mathfrak{q}>0$ and define $\mathfrak{M}(q,a)$ and $\mathfrak{M}$ as in \eqref{Mqadef} and \eqref{Mganzdef}, except with $A_\mathfrak{q}$ taking the role of $A_\mathfrak{M}$. We start by considering the case $\gamma\in \mathfrak{M}(q,a)$. As we have the asypmtotics \eqref{Lqmajorantcongruence}, we can apply Lemma \ref{marc2} and note that $\mathcal{F}$ is constant. This gives us for any $\epsilon_0>0$ the estimate
\begin{align*}
|\sum_{n\leq N}\nu(Wn+b)e(\gamma n)|\ll_{\epsilon_0} \frac{\Xi(W,b)W}{\varphi(W)q^{1-\epsilon_0}}\min\{N,||\gamma-a/q||^{-1}\}+x(\log x)^{-A_\mathfrak{q}}.
\end{align*}
With the notation of the Lemma and our choice of normalisation \eqref{normal} it follows that
\begin{align}\label{numajor}
|\hat{\nu}_{W,b}(\gamma)|\ll_{\epsilon_0} q^{-1+\epsilon_0}\frac{N}{1+N|\gamma-a/q|}+x(\log x)^{-A_\mathfrak{q}}.
\end{align}
Let now $\gamma\in \mathfrak{m}$. Since $\theta$ is a sieve, a Type I estimate is sufficient to estimate the minor arc case. By Lemma \ref{typIminor} we have
\begin{align}\label{numinor}
|\hat{\nu}_{W,b}(\gamma)|\ll x(\log x)^{7-A_q/8}.
\end{align}

Before we can use this information, we obtain a crude estimate for the second moment. We have
\begin{align*}
\int_0^1 |\hat{\phi}_{W,b}(\gamma)|^2 d\gamma &= \sum_{n\leq N}|\phi_{W,b}(n)|^2 \\
&\leq \sum_{n\leq N}|\nu_{W,b}(n)|^2 \\
&\leq \bigl(\frac{\varphi(W)}{\Xi(W,b)WH}\bigr)^2 \sum_{n\leq x} |\nu(n)|^2.
\end{align*}
By Cauchy's inequality and the bound $|\theta^+(n)|\leq \tau(n)$ we have
\begin{align*}
|\nu(n)|^2&=(\log x)^2 (\theta^+(n,x)+E'(n))^2 |\sum_{n=y^2+l^2}\Lambda(l)|^2\\
&\ll (\log x)^4 \tau(n)^4.
\end{align*}
This gives us the $L^2$ estimate
\begin{align}\label{lq2}
\int_0^1 |\hat{\phi}_{W,b}(\gamma)|^2 d\gamma \ll N (\log x)^{20}.
\end{align}
We could obtain a more sharp estimate with respect to appearing power of $\log x$. However, we cannot completely prevent this logarithmic loss.

Let $0<\delta<1$ be a parameter and set
\begin{align}
N'=\sum_{n\leq N}\nu_{W,b}(n).
\end{align} 
By \eqref{Lqmajorantcongruence} we have $N'\asymp N$. We define the set of large Fourier coefficients of $\phi_{W,b}$ by
\begin{align*}
\mathfrak{R}_\delta=\Bigl\{\gamma\in [0,1]:|\hat{\phi}_{W,b}(\gamma)|\geq \delta N' \Bigr\}.
\end{align*}
Our goal is to show for any $\epsilon_1>0$ the estimate 
\begin{align}\label{Rd}
\textsc{meas}(\mathfrak{R}_\delta)\leq \frac{C_{\epsilon_1}}{\delta^{2+\epsilon_1}N'}.
\end{align}
This is sufficient for proving Lemma \ref{Lq}. To see this, consider
 \begin{align*}
\int_0^1|\hat{\phi}_{W,b}(\gamma)|^q d\gamma&\leq \sum_{j\geq 0}\Bigl(\frac{N'}{2^{j-1}} \Bigr)^q \textsc{meas}\{\gamma\in [0,1]: \frac{N'}{2^j}\leq |\hat{\phi}_{W,b}(\gamma)|\leq \frac{N'}{2^{j-1}} \}\\
&\leq C_{\epsilon_1} 2^\mathfrak{q} N'^{\mathfrak{q}-1} \sum_{j\geq 0}(2^{2+\epsilon_1-\mathfrak{q}})^j
\end{align*}
and choose $\epsilon_1$ such that $2+\epsilon_1-\mathfrak{q}<0$. 

For proving \eqref{Rd} we first use \eqref{lq2} to crudely bound
\begin{align*}
\textsc{meas}(\mathfrak{R}_\delta)\ll  \frac{(\log N')^{20}}{\delta^2 N'}.
\end{align*}
If $\delta \leq  (\log N')^{-20/ \epsilon_1}$, \eqref{Rd} follows. We can thus assume from now
\begin{align}\label{delta}
\delta >  (\log N')^{-20/ \epsilon_1}.
\end{align}
Let $\gamma_1,\ldots,\gamma_R$ be $\frac{1}{N'}$ spaced points in $\mathfrak{R}_\delta$. To show \eqref{Rd} it suffices to show 
\begin{align}\label{Rbound}
R\leq \frac{C_{\epsilon_1}}{\delta^{2+\epsilon_1}}.
\end{align}
Following the argument in \cite[Section 6]{bp}, we let $a_n \in \mathbbm{C}$ for $1\leq n \leq N$ with $|a_n|\leq 1$ such that
\begin{align*}
\phi_{W,b}(n)=a_n \nu_{W,b}(n).
\end{align*}
Let further $c_r\in \mathbbm{C}$ for $1\leq r \leq R$ such that $|c_r|=1$ and
\begin{align*}
c_r \hat{\phi}_{W,b}(\gamma_r)=|\hat{\phi}_{W,b}(\gamma_r)|.
\end{align*}
By Cauchy's inequality we have
\begin{align*}
\delta^2N^2 R^2&\leq \Bigl(\sum_{ r\leq R}|\hat{\phi}_{W,b}(\gamma_r)| \Bigr)^2\\
&=\Bigl(\sum_{r \leq R}c_r \sum_{n\leq N} a_n \nu_{W,b}(n) e(\gamma_r n)\Bigr)^2\\
&\leq \Bigl(\sum_{n\leq N}\nu_{W,b}(n) \Bigr) \Bigl(\sum_{n\leq N}\nu_{W,b}(n)\sum_{r\leq R}c_r e(\gamma_r n)|^2\Bigr).
\end{align*}
Hence,
\begin{align} \label{lqmain}
\delta^2N R^2\ll \sum_{r,r'\leq R}|\hat{\nu}_{W,b}(\gamma_r-\gamma_{r'})|.
\end{align}
We put $\gamma=\gamma_r-\gamma_{r'}$ and by \eqref{numinor} can bound the contribution of $\gamma\in \mathfrak{m}$ to the sum in \eqref{lqmain} by
\begin{align*}
R^2 N (\log N)^{7-A_\mathfrak{q}/8}.
\end{align*}
This is negligible by \eqref{delta}, if $A_\mathfrak{q}$ is large enough in terms of $\epsilon_1$. Let now $\gamma\in \mathfrak{M}(q,a)$. By \eqref{numajor} we have
\begin{align*}
\hat{\nu}_{W,b}(\gamma)\ll_{\epsilon_0} q^{-1+\epsilon_0}N(1+N|\gamma-a/q|)^{-1}+x(\log x)^{-A_\mathfrak{q}}.
\end{align*}
Again if $A_\mathfrak{q}$ is sufficiently large in terms of $\epsilon_1$, the last term is admissible. We estimate the contribution of terms with $q>\delta^{-3}$ to the right hand side of \eqref{lqmain} by
\begin{align*}
O_\epsilon(R^2 N \delta^{3-\epsilon}).
\end{align*}
Similarly, terms with $|\gamma-a/q|>\delta^{-3}/N$ contribute at most
\begin{align*}
O(R^2N \delta^3).
\end{align*}
Both bounds are acceptable. We can now apply Lemma \ref{Bou} on the remaining terms of \eqref{lqmain} to get
\begin{align*}
\delta^2 R^2 &\ll_{\epsilon_0} \sum_{q\leq \delta^{-3}}q^{-1+\epsilon_0} \sum_{a(q)^*} \sum_{\substack{r,r'\leq R\\ |\gamma-a/q|\leq \delta^{-3}/N}}\bigl(1+N|\gamma_r-\gamma_{r'}-a/q| \bigr)^{-1} \\
&\ll_{\epsilon_0} R \delta^{-6\epsilon_0} \log(1+\delta^{-3})+\frac{R^2 \delta^{-3} \log(1+\delta^{-3})}{N}+R^2 \log(1+\delta^{-3})\delta^3.
\end{align*}
Since $\delta<1$ and $3>2$ the contribution of the third term can be ignored. Similarly, using \eqref{delta}, the second term is negligible. We have thus showed
\begin{align*}
R\ll_{\epsilon_0}\delta^{-2-5\epsilon_0},
\end{align*}
which is sufficient for \eqref{Rbound} after setting $\epsilon_0=\epsilon_1/5$.
\end{proof}

\section{Conclusion}
\begin{proof}[Proof of Theorem \ref{Goal}]
We apply Theorem \ref{MT} with $\Lambda^+$ as the majorant, $\alpha=\alpha^+(x)$ as given in \eqref{majolem3} and $\alpha^-=0.999$. For any fixed $\eta>0$ by Proposition \ref{prmajo} condition I. is fulfilled, if $x$ is sufficiently large.  The $L^q$ estimate holds for any fixed $2<q<3$ by Lemma \ref{Lq}, so condition II. also is true. 

Since $\alpha^+(x)<2.9739+o(1)$ we have $\alpha^+(x)<2.974$ for all sufficiently large $x$. For those $x$ we have that
\begin{align*}
3\alpha^- -\alpha^+(x)>0.02,
\end{align*}
so that we only need to consider some fixed $\eta$ for condition III. For any such $\eta$, by Corollary \ref{condc}, the condition is true. Thus $x$ can be written as the sum of three Fouvry-Iwaniec primes.
\end{proof}

We end this paper by proving Roth's Theorem in the Fouvry-Iwaniec primes as stated in the introduction.
\begin{proof}[Proof of Theorem \ref{3apcor}]
Let $X$ be a subset of the Fouvry-Iwaniec primes with indicator function $\mathbbm{1}_X(n)$ such that
\begin{align*}
\liminf_{x \to \infty}\frac{\sum_{n\leq x}\mathbbm{1}_X(n) \Lambda^\Lambda(n)}{\sum_{n\leq x}\Lambda^\Lambda(n)}=\delta_X>0.
\end{align*}
Let $w$ be a large integer and set $W=2\prod_{p\leq w}p$. By the pigeonhole principle there exists a residue class $b(W)$, $b\equiv 1(4)$, such that
\begin{align}\label{3apdensity}
\liminf_{x \to \infty}\frac{\sum_{n\leq x}\mathbbm{1}_X(Wn+b) \Lambda^\Lambda_{W,b}(n)}{\sum_{n\leq x}\Lambda^\Lambda_{W,b}(n)}\geq \delta_X.
\end{align}
We want to apply \cite[Proposition 5.1]{gt}. Assume we are given $x>0$ and $\eta>0$. We set $N=\floor{x/W}$ and for $n\in [N/4,N/2]$
\begin{align*}
f(n)&=\Lambda^\Lambda_{W,b}(n)\mathbbm{1}_X(Wn+b)\\
\nu(n)&=\Lambda^+_{W,b}(n).
\end{align*}
Clearly 
\begin{align*}
0\leq f(n)\leq \nu(n)
\end{align*}
for all $n\leq N$. Furthermore, by \eqref{3apdensity} the required mean condition  \cite[(5.4)]{gt} holds for some $\delta>0$ and sufficiently large $x$. If $W$ is large enough in terms of $\eta$, the majorant $\nu$ fulfills the pseudorandomness condition \cite[(5.5)]{gt}. Finally, by Lemma \ref{Lq} we get \cite[(5.6)]{gt} for some $M>0$ and $2<q<3$. Since we restricted ourselves to $n\in [N/4,N/2]$, we get
\begin{align*}
\sum_{n,d \leq N}f(n)f(n+d)f(n+2d)\geq N^2\bigl(\frac{1}{2}c(\delta)-O_{\delta,M,q}(\eta)\bigr).
\end{align*}
This completes the proof, if $\eta$ is chosen to be small enough.
\end{proof}

By slightly modifying the arguments, Theorem \ref{3apcor} could be extended to cover more cases for which \cite[Theorem 1]{foi} gives a nontrivial result.

\section*{Acknowledgments}
The author was a Ph.D. student at Utrecht University during this project and expresses his gratitude to his supervisor D. Schindler for helpful comments and discussion. He furthermore thanks the anonymous referee for carefully reading earlier versions of the manuscript.

\end{document}